
\documentclass[11pt]{amsart}

\usepackage[a4paper,hmargin=3.5cm,vmargin=4cm]{geometry}
\usepackage{amsfonts,amssymb,amscd,amstext}
\usepackage{graphicx}
\usepackage[dvips]{epsfig}
\usepackage{pstricks,pst-plot}

\usepackage[utf8]{inputenc}
\usepackage{hyperref}


\usepackage{fancyhdr}
\pagestyle{fancy}
\fancyhf{}

\input xy
\xyoption{all}


\usepackage{times}

\usepackage{enumerate}
\usepackage{titlesec}
\usepackage{mathrsfs}

\pretolerance=2000
\tolerance=3000


\headheight=13.03pt
\headsep 0.15cm
\topmargin 0.5cm
\textheight = 49\baselineskip
\textwidth 14cm
\oddsidemargin 1cm
\evensidemargin 1cm

\setlength{\parskip}{0.5em}

\titleformat{\section}
{\filcenter\bfseries\large} {\thesection{.}}{0.2cm}{}
\titleformat{\subsection}[runin]
{\bfseries} {\thesubsection{.}}{0.15cm}{}[.]
\titleformat{\subsubsection}[runin]
{\em}{\thesubsubsection{.}}{0.15cm}{}[.]

\usepackage[up,bf]{caption}


\newtheorem{theorem}{Theorem}[section]
\newtheorem{proposition}[theorem]{Proposition}
\newtheorem{claim}[theorem]{Claim}
\newtheorem{lemma}[theorem]{Lemma}
\newtheorem{corollary}[theorem]{Corollary}

\theoremstyle{definition}
\newtheorem{definition}[theorem]{Definition}
\newtheorem{remark}[theorem]{Remark}

\newtheorem{problem}[theorem]{Problem}
\newtheorem{example}[theorem]{Example}

\numberwithin{equation}{section}
\numberwithin{figure}{section}


\newcommand\Lcal{\mathcal{L}}
\newcommand\Mcal{\mathcal{M}}
\newcommand\Ncal{\mathcal{N}}

\newcommand\Tcal{\mathcal{T}}


\newcommand\ba{\mathbf{a}}

\newcommand\bu{\mathbf{u}}
\newcommand\bv{\mathbf{v}}
\newcommand\bx{\mathbf{x}}
\newcommand\by{\mathbf{y}}
\newcommand\bw{\mathbf{w}}
\newcommand\bz{\mathbf{z}}


\newcommand\Cscr{\mathscr{C}}
\newcommand\Dscr{\mathscr{D}}


\renewcommand\b{\mathbb{B}}
\renewcommand\c{\mathbb{C}}
\newcommand\cd{\overline{\mathbb D}}

\renewcommand\d{\mathbb D}

\newcommand\n{\mathbb{N}}
\renewcommand\r{\mathbb{R}}
\newcommand\s{\mathbb{S}}
\renewcommand\t{\mathbb{T}}
\newcommand\z{\mathbb{Z}}


\newcommand\igot{\mathfrak{i}}

\renewcommand\igot{\mathfrak{i}}

%
%
\newcommand\E{\mathrm{e}}

\renewcommand\imath{\igot}

\newcommand\Agot{\mathfrak{A}}

\newcommand\Mgot{\mathfrak{M}}
\newcommand\Ngot{\mathfrak{N}}

%
%

\newcommand\zero{\mathbf{0}}

%
%

\newcommand\dist{\mathrm{dist}}
\renewcommand\span{\mathrm{span}}

\newcommand\wt{\widetilde}
\newcommand\wh{\widehat}
\newcommand\di{\partial}

\newcommand\Flux{\mathrm{Flux}}

\newcommand\Psh{\mathrm{Psh}}
\newcommand\MPsh{\mathrm{\mathfrak{M}Psh}}

\newcommand\NPsh{\mathrm{\mathfrak{N}Psh}}

\newcommand\Co{\mathrm{Co}}
\newcommand\supp{\mathrm{supp}}

\newcommand\tr{\mathrm{tr}}
\newcommand\Hess{\mathrm{Hess}}


\usepackage{color}

\begin{document}

\fancyhead[LO]{Minimal surfaces in minimally convex domains}
\fancyhead[RE]{A.\ Alarc\'on, B.\ Drinovec Drnov\v sek, F.\ Forstneri\v c, and F.\ J.\ L\'opez}
\fancyhead[RO,LE]{\thepage}

\thispagestyle{empty}

\vspace*{7mm}
\begin{center}
{\bf \LARGE Minimal surfaces in minimally convex domains}
\vspace*{5mm}

{\large\bf A.\ Alarc\'on, B.\ Drinovec Drnov\v sek, F.\ Forstneri\v c, and  F.\ J.\  L\'opez}

\vspace*{5mm}
{\em Dedicated to Josip Globevnik for his seventieth birthday}
\end{center}

\vspace*{6mm}

\begin{quote}
{\small
\noindent {\bf Abstract}\hspace*{0.1cm}
In this paper, we prove that every conformal minimal immersion of a compact bordered 
Riemann surface $M$ into a minimally convex domain $D\subset \r^3$ can be approximated, 
uniformly on compacts in $\mathring M=M\setminus bM$, 
by proper complete conformal minimal immersions $\mathring M\to D$ (see Theorems \ref{th:main1},
\ref{th:main1-bis}, and \ref{th:main2}). 
We also obtain a rigidity theorem for complete immersed minimal surfaces  of finite total curvature 
contained in a minimally convex domain in $\r^3$
(see Theorem \ref{th:FTC}), and we  characterize the minimal surface hull of a 
compact set $K$ in $\r^n$ for any $n\ge 3$ by  sequences of conformal minimal discs 
whose boundaries converge to $K$ in the measure theoretic sense
(see Corollary \ref{cor:minullhull}).

\vspace*{0.1cm}
\noindent{\bf Keywords}\hspace*{0.1cm} Riemann surface, minimal surface, minimally convex domain.

\vspace*{0.1cm}

\noindent{\bf MSC (2010):}\hspace*{0.1cm} 53A10; 32B15, 32E30, 32H02.}
\end{quote}

%
%
%
%
\section{Introduction}\label{sec:intro}
A major problem in minimal surface theory is to understand
which domains in $\r^3$ admit complete properly immersed minimal surfaces, 
and how the geometry of the domain influences the conformal properties of such surfaces.
(For background  on this topic,  see e.g.\ \cite[Section 3]{MeeksPerez2004SDG}.) 
In the present paper, we obtain general existence and approximation results for complete proper 
conformal minimal immersions  
from an arbitrary bordered Riemann surface  into any minimally convex domain in $\r^3$; see 
Theorems \ref{th:main1}, \ref{th:main1-bis} and \ref{th:main2}. We also  
show that one cannot expect similar results in a wider class of domains in $\r^3$.

Let $n\ge 3$. A domain  $D\subset\r^n$ is said to be {\em minimally convex} if it admits
a smooth exhaustion function $\rho\colon D\to\r$ that is {\em strongly $2$-plurisubharmonic}
(also called {\em minimal strongly plurisubharmonic}), 
meaning that for every point $\bx\in D$, the sum of the smallest two eigenvalues of the Hessian $\Hess_\rho(\bx)$ is positive.
(See Definitions \ref{def:p-psh} and \ref{def:sp-psh}.)
A domain $D$ with $\Cscr^2$ boundary is minimally convex if and only if $\kappa_1(\bx)+\kappa_2(\bx)\ge 0$ 
for each point $\bx\in bD$, where $\kappa_1(\bx)\le \kappa_2(\bx)\le \cdots\le \kappa_{n-1}(\bx)$ 
are the normal curvatures of $bD$ at the point $\bx$ with respect to the inner normal (see Theorem \ref{th:p-convex}).
In particular, a domain in $\r^3$ bounded by a properly embedded minimal surface
is minimally convex (see Corollary \ref{cor:compl-minimal}). 
Clearly, every convex domain is also minimally convex, but  there exist minimally convex domains without any convex boundary points
(see Example \ref{ex:catenoid}).

Our first main result is the following.

%
%
\begin{theorem}\label{th:main1}
Assume that $D$ is a minimally  convex domain in $\r^3$, and let $M$ be a compact bordered Riemann surface
with nonempty boundary $bM$.  Then, every conformal minimal immersion $F_0\colon M\to D$  
can be approximated, uniformly on compacts in $\mathring  M=M\setminus bM$, by proper complete conformal 
minimal immersions $F\colon \mathring M\to D$ with $\Flux(F)=\Flux(F_0)$. 
\end{theorem}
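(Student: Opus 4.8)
The plan is to combine the standard Riemann–Hilbert technique for conformal minimal immersions with the strong $2$-plurisubharmonic exhaustion function $\rho$ that defines minimal convexity, so as to simultaneously push the boundary of the surface out of every fixed compact subset of $D$ (properness) and make the induced metric complete. First I would exhaust $M$ by an increasing sequence of smoothly bounded domains, or rather work with a single target and build $F$ as a limit of a sequence $F_k\colon M\to D$ of conformal minimal immersions, where at each step I enlarge the intrinsic diameter by a controlled amount and raise the value of $\rho\circ F_k$ on a prescribed part of the surface, while keeping the $\Cscr^0$-distance from $F_{k-1}$ small on a fixed compact $K_{k-1}\Subset\mathring M$. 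The two ingredients that make this possible are: (i) the fact, proved earlier in the paper (Theorem \ref{th:p-convex} and its consequences), that $\rho$ is strongly $2$-plurisubharmonic, so that its restriction to any conformal minimal surface is subharmonic and in fact has a definite amount of positivity in the Hessian along the surface; and (ii) a Riemann–Hilbert type deformation lemma for conformal minimal immersions into $\r^3$, which allows one to attach small ``boundary labyrinths'' or to perturb $F$ near $bM$ along prescribed directions without leaving $D$, because the $2$-plurisubharmonicity controls how much $\rho\circ F$ can drop under such a perturbation.

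The key steps, in order, are as follows. (1) Fix an exhaustion $K_1\Subset K_2\Subset\cdots$ of $\mathring M$ by compact, smoothly bounded, Runge domains with $\bigcup_j K_j=\mathring M$, and fix a value $c_0$ with $F_0(M)\subset\{\rho<c_0\}$. (2) Inductive step of ``completeness'' type: given $F_{k-1}$, use the Riemann–Hilbert method — in the form developed by Alarc\'on–Forstneri\v c–L\'opez for conformal minimal immersions — to produce $F_k$ close to $F_{k-1}$ on $K_{k-1}$ such that every curve in $M$ connecting $K_{k-1}$ to $bK_k$ has $F_k$-length at least $1$; this is the standard ``spray + Mergelyan'' argument on a thin annular neighbourhood of $bK_k$, and crucially the perturbation can be kept inside $D$ because along the annulus one moves essentially tangentially to the level sets of $\rho$, so $\rho\circ F_k$ stays below a value only slightly larger than $\rho\circ F_{k-1}$. (3) Inductive step of ``properness'' type: again by a Riemann–Hilbert deformation supported near $bK_k$, arrange that $\rho\circ F_k > c_{k-1}$ on $M\setminus \mathring K_{k-1}$ and push $F_k(M)\subset\{\rho<c_k\}$ for a suitable $c_k>c_{k-1}$ with $c_k\to\sup_D\rho=+\infty$; here the positivity of the sum of the two smallest eigenvalues of $\Hess_\rho$ is exactly what guarantees that the restriction of $\rho$ to the minimal surface is strictly subharmonic, so that the maximum principle and a Poisson-type estimate let us raise $\rho$ along the boundary by a definite amount at controlled cost. (4) Flux control: at every step, the Riemann–Hilbert deformation can be performed so as to preserve the flux homomorphism, since one modifies the (real part of the) Weierstrass data without changing the periods of its imaginary part; hence $\Flux(F_k)=\Flux(F_0)$ for all $k$. (5) Passing to the limit: choosing the approximations in steps (2)–(3) so tight that $\sum_k \|F_k-F_{k-1}\|_{\Cscr^1(K_{k-1})}<\infty$, the sequence $F_k$ converges uniformly on compacts of $\mathring M$ to a conformal minimal immersion $F\colon\mathring M\to D$; the completeness estimates of step (2) pass to the limit to give that $F$ is complete, and the properness estimates of step (3) give that $F^{-1}(\{\rho\le c\})$ is compact for every $c$, i.e.\ $F$ is proper into $D$. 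Finally $\Flux(F)=\lim\Flux(F_k)=\Flux(F_0)$.

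I expect the main obstacle to be step (3) — the properness step — and more precisely the simultaneous control of properness, completeness and ``staying inside $D$''. The difficulty is that pushing $\rho\circ F_k$ up near the boundary is a genuinely nonlinear constraint: a Riemann–Hilbert perturbation that raises $\rho$ on most of $bK_k$ may force $\rho$ to decrease somewhere, and one must ensure this decrease does not exceed the gap $c_{k-1}-\sup_{K_{k-2}}\rho\circ F_{k-1}$ already consumed. The resolution is quantitative: because $\rho$ is strongly $2$-plurisubharmonic, along the minimal surface $\Delta(\rho\circ F_k)\ge \delta>0$ on the relevant annulus for a constant $\delta$ depending only on the geometry of $\rho$ near the level $c_{k-1}$, and a standard estimate for subharmonic functions on an annulus bounds the worst-case dip in terms of this $\delta$ and the conformal modulus; choosing the annuli $K_k\setminus K_{k-1}$ appropriately (wide in modulus) makes the dip negligible. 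A secondary technical point is the Mergelyan/Runge approximation on the possibly non-planar bordered surface $M$ while keeping the image in $D$ and preserving the flux; this is handled exactly as in the planar-domain case treated in the authors' earlier work, since minimal convexity is used only through the one scalar inequality on $\Hess_\rho$.
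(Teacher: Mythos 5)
Your overall architecture (a recursive scheme alternating boundary-lifting steps with intrinsic-diameter-increasing steps, followed by a limit argument with flux preserved at each stage) matches the actual proof, but the core of your properness step is not a valid argument, and this is precisely where minimal convexity must enter. You propose to control the possible drop of $\rho$ under a Riemann--Hilbert perturbation by invoking strict subharmonicity of $\rho\circ F_k$ on the surface together with ``a standard estimate for subharmonic functions on an annulus'' bounding the worst-case dip in terms of the Laplacian lower bound and the conformal modulus. No such estimate exists: subharmonic functions obey a maximum principle, not a minimum principle, so boundary data and a lower bound on the Laplacian give no lower bound in the interior; and in any case the quantity to be controlled is the comparison between $\rho\circ G$ (the deformed map) and $\rho\circ F$ (the old map), which is not a statement about a single subharmonic function at all. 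What is actually needed, and what the paper constructs, is a geometric deformation datum: at every noncritical point $\bx$ of $\rho$ one intersects the null quadric with the tangent spaces of the local complex hypersurface $\Sigma_\bx$ on which the gradient and second-order terms of $\rho$ cancel, obtaining two embedded conformal minimal (null) discs through $\bx$ along which $\rho$ grows quadratically, uniformly on compacts in the complement of the critical locus (Lemma \ref{lem:M-discs}); these discs are then used as the fiber data of the Riemann--Hilbert problem for conformal minimal surfaces in $\r^3$ (Theorem \ref{th:RH}), and it is the geometry of the attached discs --- the deformed surface stays uniformly close to the disc family on which $\rho$ does not decrease --- that yields the pointwise estimate $\rho(G)\ge\rho(F)-\delta$ of Proposition \ref{prop:lifting}. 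Subharmonicity of $\rho$ on minimal surfaces is used for maximum principles elsewhere in the paper, but it does not produce the lifting deformation.

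A second genuine omission is the critical locus of $\rho$. For a general minimally convex domain (e.g.\ the catenoid complement of Example \ref{ex:catenoid}, with $\pi_1(D)=\z$) every exhaustion $\rho$ has critical points of index $1$, and the lifting proposition only applies when the boundary image stays in a compact set away from the critical locus; your induction would get stuck at the first critical level above $\sup_M\rho\circ F_0$, since boundary points whose images lie near the stable manifold of an index-$1$ critical point cannot be pushed past that level by small lifting steps. The paper resolves this with a Morse-theoretic device: after making the critical points nice (Lemma \ref{lem:nice}), one replaces $\rho$ locally by a modified minimal strongly plurisubharmonic function $\tau$ whose sublevel sets tunnel along the stable manifold (Lemma \ref{lem:passing-model}), and lifts with respect to $\tau$ to cross the critical value; this is assembled in Lemma \ref{lem:lifting2}. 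Without some substitute for these two ingredients --- the quadratically $\rho$-increasing minimal discs feeding the Riemann--Hilbert problem, and the mechanism for crossing critical levels --- your outline does not yield the theorem.
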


Recall that a {\em compact bordered Riemann surface} is a compact connected oriented surface, 
$M$, endowed with a complex structure, whose boundary $bM\neq\emptyset$ consists of finitely many 
smooth Jordan curves.  The interior, $\mathring M=M\setminus bM$, of such $M$ is an (open) 
{\em bordered Riemann surface}. A {\em conformal minimal immersion} $F\colon M\to\r^n$ is an immersion 
which is angle preserving and harmonic; such a map parametrizes a minimal surface in $\r^n$. 
The {\em flux} of $F$ is the group homomorphism $\Flux (F)\colon H_1(M,\z)\to \r^n$ whose value on any closed 
oriented curve $\gamma\subset M$ is $\Flux(F)(\gamma)=\oint_\gamma \Im (\partial F)$;
here, $\partial F$ is the $(1,0)$-differential of $F$ and $\Im$ denotes the imaginary part.
An immersion $F\colon \mathring M\to \r^n$  is said to be {\em complete} if the pull-back $F^*ds^2$ 
of the Euclidean metric on $\r^n$ is a complete Riemannian metric  on $ \mathring M$. 

Note that Theorem \ref{th:main1} pertains to a fixed conformal structure on the surface $M$. 
The analogous result for {\em convex domains} in $\r^n$ for any $n\ge 3$ 
is \cite[Theorem 1.4]{AlarconDrinovecForstnericLopez2015}; see also \cite[Theorem 1]{AlarconLopez2013MA}.
Theorem \ref{th:main1} seems to be the first general existence and approximation result for 
(complete) proper minimal surfaces in a class of domains in $\r^3$ which contains all convex domains, 
but also many non-convex ones; convexity has been impossible to avoid with the existing construction methods. 
Comparing with the results in the literature, 
it is known that there are properly immersed minimal  surfaces in $\r^3$ with arbitrary conformal structure 
(see \cite{AlarconLopez2012JDG,AlarconLopez2014TAMS,AlarconLopez2015GT}), 
and that every domain $D\subset\r^3$ which is convex, or has a smooth strictly convex boundary point, 
admits complete properly immersed minimal surfaces that 
are conformally equivalent to any given bordered Riemann surface 
(see \cite{AlarconDrinovecForstnericLopez2015}). These were the most general known results in this direction up to now.

As shown by Remark \ref{rem:precise} and Examples \ref{ex:obstacles} and \ref{ex:no-disc}, 
the hypothesis of minimal convexity is essentially optimal in Theorem \ref{th:main1}. 
In Example \ref{ex:obstacles} we exhibit a bounded, simply connected domain $D\subset \r^3$
such that a certain conformal minimal disc $F_0\colon \overline\d\to D$ cannot be approximated by {\em proper} 
conformal minimal discs $\d\to D$.  (Here, $\d=\{\zeta\in \c: |\zeta|<1\}$.)
In another direction, Mart\'in, Meeks and Nadirashvili constructed bounded (non-simply connected) domains in $\r^3$ 
which do not admit any complete properly immersed minimal surfaces with an annular end (see \cite{MartinMeeksNadirashvili2007AJM}). 
We point out in Example \ref{ex:no-disc} that there is a domain from \cite{MartinMeeksNadirashvili2007AJM} which
does not admit any proper minimal discs. Clearly, Theorem \ref{th:main1} fails in both these examples 
even without the completeness condition.

In Remark \ref{rem:generalizations}, we indicate a generalization of
Theorem \ref{th:main1}, and of the related subsequent results in this paper, to a certain class of
not necessarily convex domains in $\r^n$ for any $n>3$.
However, we have optimal results only in dimension $n=3$.

Theorem \ref{th:main1} is proved in Section  \ref{sec:Proof1}; here is a brief outline. 
Let $\rho\colon D\to\r$ be a Morse minimal strongly plurisubharmonic exhaustion function
with the (discrete) critical locus $P$.  For every point $\bx\in D\setminus P$ we find a small
embedded conformal minimal disc $\bx \in \Mcal_\bx \subset D$ such that 
the restriction of $\rho$ to $\Mcal_\bx$ has a strict minimum at $\bx$ and increases quadratically.
Furthermore, for points in a simply connected compact set in $D\setminus P$ we can choose a smooth family
of such discs satisfying uniform estimates for the rate of growth of $\rho$ (see Lemma \ref{lem:M-discs}). 
By using these discs and an approximate solution of a Riemann-Hilbert type boundary value problem
(see Theorem \ref{th:RH}), we can lift the boundary of a given conformal minimal immersion $M\to D$  to a higher level of the 
function $\rho$, paying attention not to decrease the level of $\rho$ much anywhere on $M$ and to approximate 
the given immersion on a chosen compact subset of $\mathring M$ (see Proposition \ref{prop:lifting}).  
This procedure can be carried out so that the image of the boundary $bM$ avoids the critical locus 
of $\rho$. A recursive application of this lifting method leads to the construction
of a proper conformal minimal immersion $\mathring M\to D$. 
(Analogous results for proper holomorphic maps can be found 
in \cite{DrinovecForstneric2007DMJ,DrinovecForstneric2010AJM}.) 
This construction method is geometrically  simpler 
than the one developed by the authors in \cite{AlarconDrinovecForstnericLopez2015},
the main advantage being the higher flexibility of the Riemann-Hilbert method that is available 
in dimension $n=3$ (compare Theorem \ref{th:RH} with \cite[Theorem 3.5]{AlarconDrinovecForstnericLopez2015}).

Completeness of the immersion is achieved by combining the boundary lifting procedure
with a technique, developed recently in \cite{AlarconDrinovecForstnericLopez2015}, 
that enables one to increase the intrinsic boundary distance 
in $M$ by an arbitrarily big amount while staying $\Cscr^0$ close to a given 
conformal minimal immersion  $M\to \r^n$ (see \cite[Lemmas 4.1 and 4.2]{AlarconDrinovecForstnericLopez2015}).  
A recursive application of these two techniques yields 
Theorem \ref{th:main1}. (See Section  \ref{sec:Proof1} for the details.)

Before proceeding, we place the class of minimal plurisubharmonic functions and 
minimally convex domains into a wider framework, and we provide some examples.

Minimal plurisubharmonic functions are a special case, with $p=2$, of the class of 
{\em $p$-plurisubharmonic functions}
which have been studied by Harvey and Lawson in  \cite{HarveyLawson2013IUMJ};
see also \cite{DrinovecForstneric2015TAMS,HarveyLawson20092AJM,HarveyLawson2011ALM,HarveyLawson2012AM}.
A real-valued $\Cscr^2$  function $u$ on a domain $D\subset \r^n$ is said to be (strongly) 
$p$-plurisubharmonic for some integer $p\in  \{1,2,\ldots,n\}$ if the restriction of $u$ 
to any $p$-dimensional affine subspace of $\r^n$ is (strongly) subharmonic (see Definition \ref{def:p-psh});
equivalently, if the sum of the $p$ smallest eigenvalues of the Hessian of $u$ is 
nonnegative (positive) at every point (see Proposition \ref{prop:characterizations}).
The restriction of a $p$-plurisubharmonic function to a $p$-dimensional
minimal submanifold is a subharmonic function on the submanifold (see Proposition \ref{prop:characterizations}).
Note that $1$-plurisubharmonic functions are convex functions,
while $n$-plurisubharmonic  functions are subharmonic functions.
The set $\Psh_p(D)$ of all $p$-plurisubharmonic functions on $D$ 
is closed under addition and multiplication by nonnegative numbers.

A domain $D\subset\r^n$ is said to be {\em $p$-convex} if it admits a strongly $p$-plurisubharmonic exhaustion function 
$\rho\colon D\to\r$ (see Definition \ref{def:sp-psh} and Proposition \ref{prop:p-convex}).
Thus, $1$-convex domains are linearly convex, while $2$-convex domains are minimally convex. 
Every domain in $\r^n$ is $n$-convex; this is a special case of a theorem of 
Greene and Wu \cite{GreeneWu1975} (see also Demailly \cite{Demailly1990})
that every connected noncompact Riemannian manifold 
admits  a smooth strongly subharmonic exhaustion function.
Harvey and Lawson proved that, for smoothly bounded domains in $\r^n$, 
$p$-convexity is a local property of the boundary, akin to Levi pseudoconvexity in complex analysis. 
For future reference, we state the following summary of their main results  from \cite{HarveyLawson2013IUMJ}.
Harvey and Lawson considered bounded domains in $\r^n$, but we show  
in  Section \ref{ss:strongly-p-convex} that Theorem \ref{th:p-convex} also holds for unbounded domains.

%
%
\begin{theorem}[Section 3 in \cite{HarveyLawson2013IUMJ}] 
\label{th:p-convex}
Let $1\le p<n$ be integers, and let $D\subset \r^n$ be a 
domain  with $\Cscr^2$ boundary, not necessarily bounded. 
The following conditions are equivalent.
\begin{itemize} 
\item[\rm (a)]  $D$ is $p$-convex. 
\item[\rm (b)]   There exist a neighborhood $U\subset \r^n$ of $bD$ and a $\Cscr^2$ function $\rho\colon U\to \r$ 
such that  $D\cap U=\{\rho<0\}$,  $d\rho\ne 0$ on $bD\cap U=\{\rho=0\}$, and 
\begin{equation}\label{eq:trace-bD}
	\tr_L \Hess_\rho(\bx) \ge 0 \ \ \text{for every tangent $p$-plane}\ L\subset T_\bx bD,\ 
	\bx\in bD.
\end{equation}
(Here $\Hess_\rho(\bx)$ is the Hessian (\ref{eq:Hess})  of $\rho$ at $\bx$ and $\tr_L$ denotes the trace of the 
restriction to $L$.) Property (\ref{eq:trace-bD})  is  independent of the choice of $\rho$. 
\item[\rm (c)] 
If $\bx\in bD$ and $\kappa_1\le \kappa_2\le\ldots\le \kappa_{n-1}$ are the principal curvatures of $bD$
from the inner side at $\bx$, then
$ 
 	\kappa_1 +  \kappa_2 + \cdots + \kappa_{p} \ge 0.
$
\item[\rm (d)]  There exists a neighborhood $U$ of $bD$ such that the function $-\log \dist(\cdotp,bD)$ 
is $p$-plurisubharmonic  on $D\cap U$.
\end{itemize}
\end{theorem}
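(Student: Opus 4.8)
The plan is to prove the cycle $(a)\Rightarrow(c)\Leftrightarrow(b)$ and $(c)\Rightarrow(d)\Rightarrow(a)$. The equivalence $(b)\Leftrightarrow(c)$ and the implications among $(b),(c),(d)$ are local near $bD$ and are insensitive to whether $D$ is bounded, so they go through exactly as in \cite[Section~3]{HarveyLawson2013IUMJ}; the places where unboundedness of $D$ must actually be handled are the globalization $(d)\Rightarrow(a)$ and the hard implication $(a)\Rightarrow(c)$.

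\emph{$(b)\Leftrightarrow(c)$ and independence of $\rho$.} Near $\bx\in bD$, with $D\cap U=\{\rho<0\}$ and $d\rho\neq0$ on $bD\cap U$, for $X,Y\in T_\bx bD$ one has $\Hess_\rho(\bx)(X,Y)=|\nabla\rho(\bx)|\langle S_\bx X,Y\rangle$, where $S_\bx$ is the shape operator of $bD$ at $\bx$ with respect to the inner normal; hence $\Hess_\rho(\bx)|_{T_\bx bD}$ has eigenvalues $|\nabla\rho(\bx)|\kappa_1\le\cdots\le|\nabla\rho(\bx)|\kappa_{n-1}$. Since the sum of the $p$ smallest eigenvalues of a symmetric form is the minimum of its trace over $p$-planes (Proposition \ref{prop:characterizations}), condition (\ref{eq:trace-bD}) is equivalent to $\kappa_1+\cdots+\kappa_p\ge0$. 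For two defining functions, $\widetilde\rho=h\rho$ with $h>0$ near $bD$, and since $d\rho(\bx)$ kills $T_\bx bD$ we get $\Hess_{\widetilde\rho}(\bx)|_{T_\bx bD}=h(\bx)\Hess_\rho(\bx)|_{T_\bx bD}$, so (\ref{eq:trace-bD}) does not depend on $\rho$.

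\emph{$(c)\Rightarrow(d)$.} As $bD$ is $\Cscr^2$, $\delta:=\dist(\cdot,bD)$ is $\Cscr^2$ on $D\cap U$ with $|\nabla\delta|\equiv1$. At $\bx$ with nearest boundary point $\bx_0$, $\Hess_{-\log\delta}(\bx)$ is block-diagonal for the splitting $\r\nabla\delta(\bx)\oplus\nabla\delta(\bx)^\perp$, equal to $\delta(\bx)^{-2}$ on the first factor and, on the second, with eigenvalues $\kappa_i(\bx_0)\big/\bigl(\delta(\bx)(1-\delta(\bx)\kappa_i(\bx_0))\bigr)$. Since $p<n$ and $\delta^{-2}$ exceeds all the other eigenvalues for small $\delta$, and $t\mapsto t/(1-\delta t)$ is increasing on $\{\delta t<1\}$, the $p$ smallest eigenvalues are those with indices $1,\dots,p$, with sum $\delta^{-1}\sum_{i=1}^p\kappa_i(\bx_0)/(1-\delta\kappa_i(\bx_0))$; by convexity of $t\mapsto t/(1-\delta t)$ on $\{\delta t<1\}$ and $(c)$ this is $\ge p\,\delta^{-1}\bar\kappa/(1-\delta\bar\kappa)\ge0$, where $\bar\kappa=\frac1p\sum_{i=1}^p\kappa_i(\bx_0)\ge0$. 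So $-\log\delta$ is $p$-plurisubharmonic near $bD$ in $D$.

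\emph{$(d)\Rightarrow(a)$.} Let $\tau:=-\log\dist(\cdot,bD)$ be $p$-plurisubharmonic on $\{0<\dist(\cdot,bD)<r_0\}$, and pick a smooth convex nondecreasing $\chi$ with $\chi\equiv0$ on $(-\infty,1-\log r_0]$ and $\chi(s)\to+\infty$ as $s\to+\infty$. Then $\chi\circ\tau$ is $p$-plurisubharmonic: in $\Hess_{\chi\circ\tau}=(\chi'\circ\tau)\Hess_\tau+(\chi''\circ\tau)\,d\tau\otimes d\tau$ the first summand has nonnegative sum of its $p$ smallest eigenvalues and the second is positive semidefinite, so superadditivity of the trace-minimum over $p$-planes applies. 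As $\chi\circ\tau$ vanishes near $\{\dist(\cdot,bD)=r_0\}$, it extends by $0$ to a $p$-plurisubharmonic $\rho_1\ge0$ on $D$ with $\rho_1\to+\infty$ at $bD$. Now $\rho:=\rho_1+\epsilon|\bx|^2$ ($\epsilon>0$) is strongly $p$-plurisubharmonic on $D$ (since $|\bx|^2$ is strongly convex, hence strongly $p$-plurisubharmonic), and $\{\rho\le c\}\subset\{\rho_1\le c\}\cap\{|\bx|\le\sqrt{c/\epsilon}\}$ is bounded and bounded away from $bD$, hence compact in $D$. So $\rho$ is a strongly $p$-plurisubharmonic exhaustion of $D$, i.e.\ $(a)$. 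The term $\epsilon|\bx|^2$ is exactly what forces the sublevel sets to be compact when $D$ is unbounded; for bounded $D$ this reduces to the construction of \cite{HarveyLawson2013IUMJ}.

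\emph{$(a)\Rightarrow(c)$ — the main obstacle.} For bounded $D$ this is \cite[Section~3]{HarveyLawson2013IUMJ} and is the substantive part of the theorem: if $\kappa_1+\cdots+\kappa_p<0$ at some $\bx_0\in bD$, then there are minimal $p$-dimensional submanifolds of $D$ sitting near $\bx_0$ on which a strongly $p$-plurisubharmonic exhaustion of $D$ cannot satisfy the maximum principle, contradicting that its restriction to any minimal $p$-submanifold of $D$ is subharmonic (Proposition \ref{prop:characterizations}). For unbounded $D$ I would reduce to this: fix $\bx_0\in bD$, choose $R>0$ so that $bD$ meets $\partial B(\bx_0,R)$ transversally, and (smoothing the edge away from $\bx_0$) let $D'\subset\r^n$ be a bounded domain with $\Cscr^2$ boundary that coincides with $D$ near $\bx_0$. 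A ball has all inner principal curvatures positive, so by $(c)\Rightarrow(d)$ the function $-\log\dist(\cdot,\partial B(\bx_0,R))$ is $p$-plurisubharmonic near $\partial B(\bx_0,R)$; composing it with a convex function as above and adding the result to the given exhaustion of $D$ produces a strongly $p$-plurisubharmonic exhaustion of $D'$. Thus $D'$ satisfies $(a)$, hence $(c)$ by the bounded case, so $\kappa_1+\cdots+\kappa_p\ge0$ at $\bx_0$ because $bD'=bD$ near $\bx_0$. The delicate ingredient, used here as a black box, is precisely the construction of those minimal test submanifolds in the bounded case; everything else is elementary, or the gluing trick of the previous step.
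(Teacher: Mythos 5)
Your steps (b)$\Leftrightarrow$(c) and (c)$\Rightarrow$(d) are fine, but the two places where unboundedness of $D$ actually has to be confronted --- which is the only content of Theorem \ref{th:p-convex} beyond \cite{HarveyLawson2013IUMJ} --- both have gaps. In (d)$\Rightarrow$(a) you silently replace the hypothesis ``$-\log\dist(\cdot,bD)$ is $p$-plurisubharmonic on $D\cap U$ for \emph{some} neighborhood $U$ of $bD$'' by ``on a uniform collar $\{0<\dist(\cdot,bD)<r_0\}$''. When $bD$ is unbounded, a neighborhood of $bD$ need not contain any such collar (both the width of $U$ and the reach of $bD$ may shrink to $0$ at infinity), so composing a convex cutoff $\chi$ with $\tau=-\log\delta$ alone and extending by zero is not available; your argument as written only re-proves the bounded case. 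This is precisely the point the paper's proof addresses: it uses a \emph{spatial} cutoff $\chi(\bx)$ (equal to $1$ near $bD$ and to $0$ off $U$), accepts that $-\chi\log\delta$ is then not $p$-plurisubharmonic on the (possibly unbounded) transition region, and compensates there by adding $h(\|\bx\|^2)$ with $h'$ growing sufficiently fast; your additive term $\epsilon\|\bx\|^2$ has bounded Hessian and cannot play that compensating role.

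The more serious gap is in (a)$\Rightarrow$(c). To invoke the bounded Harvey--Lawson theorem you need the smoothed truncation $D'$ to be simultaneously bounded, globally $\Cscr^2$, and $p$-convex, and the last property is not established: the function you propose (the exhaustion of $D$ plus $\chi(-\log\dist(\cdot,\partial B(\bx_0,R)))$) tends to $+\infty$ only along $bD\cup\partial B(\bx_0,R)$, whereas the smoothed part of $bD'$ lies in the \emph{interior} of $D\cap B(\bx_0,R)$, where this function stays finite; hence it is not an exhaustion of $D'$, and the hull definition does not save you either, since $\wh K_{p,D'}$ could a priori accumulate on the smoothed corner from inside $D\cap B(\bx_0,R)$. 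The paper avoids this entirely: global $p$-convexity immediately gives \emph{local} $p$-convexity at every boundary point (hulls localize, $\wh K_{p,D\cap U}\subset \wh K_{p,D}\cap \wh K_{p,U}$), and Harvey--Lawson's characterization of local $p$-convexity (\cite[Theorem 3.7, Remark 3.11, Summary 3.16]{HarveyLawson2013IUMJ}) is a statement about the boundary germ, insensitive to boundedness, which yields (a)$\Rightarrow$(b)$\Leftrightarrow$(c)$\Rightarrow$(d) directly, leaving (d)$\Rightarrow$(a) as the only step requiring new work. To rescue your reduction you would have to smooth the corner in a way that provably preserves $p$-convexity (e.g.\ a regularized maximum of defining functions together with a curvature check along the smoothed part), or simply quote the local form of Harvey--Lawson's result as the paper does.
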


Theorem \ref{th:p-convex} shows in particular that a domain $D\subset\r^3$ with $\Cscr^2$
boundary is minimally convex if and only if the principal curvatures of the boundary $bD$
satisfy $\kappa_1(\bx)+\kappa_2(\bx)\ge 0$ at every point $\bx\in bD$. Theorem \ref{th:main1}
applies to any such domain.

The following is a  corollary to Theorem \ref{th:p-convex} in the case  $D=\r^3$
(note that we have $\kappa_1 +  \kappa_2=0$ on a minimal surface $S\subset \r^3$);
the general case is proved in Section \ref{ss:p-convex}.

%
%
\begin{corollary} \label{cor:compl-minimal}
If $S$ is a properly embedded minimal surface in $\r^3$, then every connected
component of $\r^3\setminus S$ is a minimally convex domain. More generally, 
if $D\subset \r^3$ is a minimally convex domain and $S$ is a closed embedded minimal 
surface in a neighborhood of $\overline D$, then every connected component 
of $D\setminus S$ is minimally convex. 
\end{corollary}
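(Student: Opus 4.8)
The plan is to produce, on each connected component $\Omega$ of the complement, a smooth strongly $2$-plurisubharmonic exhaustion function, which by Definition \ref{def:sp-psh} is exactly what ``minimally convex'' means. In the case $D=\r^3$ one can argue directly: when $S$ is properly embedded its complement has $\Cscr^2$ (indeed real-analytic) boundary, and at every $\bx\in b\Omega\subset S$ the principal curvatures of $b\Omega$ with respect to the inner normal of $\Omega$ sum to $\kappa_1(\bx)+\kappa_2(\bx)=0$ — this sum is unchanged if the normal is reversed, and it vanishes because $S$ is minimal — so $\Omega$ is minimally convex by the equivalence (c)$\Leftrightarrow$(a) in Theorem \ref{th:p-convex} with $n=3$, $p=2$. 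For the general statement, and for any configuration in which $b\Omega$ fails to be a nicely embedded surface (e.g.\ when $S$ does not locally separate into distinct components on its two sides), one builds the exhaustion by hand.

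The key local fact is that $-\log\dist(\cdotp,S)$ is $2$-plurisubharmonic on a suitable neighborhood of $S$, minus $S$ itself. Near a point $\bx_0\in S$ the surface divides a small ball into two pieces; since $S$ is minimal, the smooth part of the boundary of each piece — a piece of $S$ — satisfies $\kappa_1+\kappa_2=0\ge 0$, so each piece is a $2$-convex domain and Theorem \ref{th:p-convex}(d) shows that $-\log$ of the distance to its boundary, which near $\bx_0$ agrees with $-\log\dist(\cdotp,S)$, is $2$-plurisubharmonic there. Along the cut locus of $S$ one has $\dist(\cdotp,S)=\min_j\dist(\cdotp,S_j)$ over the finitely many local sheets $S_j$, hence $-\log\dist(\cdotp,S)=\max_j(-\log\dist(\cdotp,S_j))$; this is again $2$-plurisubharmonic, because restricting a maximum of finitely many functions to an affine $2$-plane gives the maximum of the restrictions, and a maximum of subharmonic functions is subharmonic. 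Now let $\rho_0$ be a smooth strongly $2$-plurisubharmonic exhaustion of the ambient domain ($\rho_0(\bx)=|\bx|^2$ when the ambient domain is $\r^3$, and $\rho_0$ exists by Definition \ref{def:sp-psh} for a general minimally convex $D$), put $v:=-\log\dist(\cdotp,S)$ where defined, and, truncating $v$ below a level so as to extend it by a constant outside a thin tube around $S$ on which $v$ is $2$-plurisubharmonic, form a continuous $2$-plurisubharmonic function $\wt v$ on a neighborhood of $\overline D$ minus $S$ with $\wt v\to+\infty$ at $S$. Then $\wh\rho:=\rho_0+\wt v$, restricted to $\Omega$, is a continuous strongly $2$-plurisubharmonic function (the sum of a smooth strongly $2$-plurisubharmonic one and a continuous $2$-plurisubharmonic one, using that a strongly $2$-plurisubharmonic summand keeps the sum strongly so, via the Ky Fan inequality for the sum of the two smallest eigenvalues of a sum of symmetric matrices), it is bounded below, and it is an exhaustion of $\Omega$: it diverges along $bD\cap b\Omega$ because $\rho_0$ does and along $S\cap b\Omega$ because $\wt v$ does. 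A Richberg-type smoothing theorem for $2$-plurisubharmonic functions, in the spirit of Harvey and Lawson — or, concretely, a regularized maximum smoothing $v$ near the cut locus of $S$ and near the truncation level by the finitely many smooth local branches — then replaces $\wh\rho$ by a smooth strongly $2$-plurisubharmonic exhaustion of $\Omega$, so $\Omega$ is minimally convex.

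The only genuinely delicate point is this last step: arranging that the patching across the cut locus of $S$ and across the edges where $S$ meets $bD$ gives an honestly smooth, strongly $2$-plurisubharmonic function, and choosing the truncation of $v$ so that $\wh\rho$ remains proper even when $\overline D$ is noncompact (which requires truncating with a function that exhausts rather than with a constant). Both are handled by the standard regularized-maximum construction, which preserves $p$-plurisubharmonicity for the same reason it preserves plurisubharmonicity — the defining property, subharmonicity of every affine $p$-dimensional slice, is stable under convex nondecreasing combinations — together with the fact that adding a strongly $2$-plurisubharmonic term restores strictness. Everything else is immediate from Theorem \ref{th:p-convex} and the remarks preceding it, in particular from the identity $\kappa_1+\kappa_2=0$ on a minimal surface.
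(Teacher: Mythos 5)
Your overall route is the one the paper takes, so in outline you are on target: the first assertion is read off from Theorem \ref{th:p-convex} with $n=3$, $p=2$, using $\kappa_1+\kappa_2=0$ on $S$ (and the fact, worth a word, that a properly embedded surface separates $\r^3$, so each component does lie on one side of its boundary), and the general case is obtained by splicing $-\log\dist(\cdotp,S)$ near $S$ with a strongly minimal plurisubharmonic exhaustion $\rho_0$ of $D$ furnished by Proposition \ref{prop:p-convex}. The paper performs this splicing exactly as in its proof of (d)$\Rightarrow$(a) of Theorem \ref{th:p-convex}: a smooth cutoff $\chi$ supported in a thin collar, plus a term $h\circ\rho_0$ with $h$ convex and increasing sufficiently fast, whose Hessian absorbs the error on $\supp\, d\chi$ via \eqref{eq:Hess-comp}. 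That produces a smooth strongly minimal plurisubharmonic exhaustion of each component directly, so the truncations, maxima and Richberg-type smoothing in your write-up, while workable (the regularized maximum and the superadditivity of the sum of the two smallest eigenvalues do preserve the relevant classes, as you say), are avoidable overhead rather than a different method.

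There is, however, one step that does not hold as stated: the claim that along the cut locus one has $\dist(\cdotp,S)=\min_j\dist(\cdotp,S_j)$ over finitely many smooth local sheets, and hence that $-\log\dist(\cdotp,S)$ is $2$-plurisubharmonic on an arbitrary thin tube around $S$. First, the nearest-point set of a cut point need not be finite and the branch distances need not be smooth there: for the catenoid, points of the axis sit at the focal distance of the waist, their nearest points form an entire circle, and near such points the distance is not a minimum of finitely many $\Cscr^2$ functions. Second, the computation that gives $2$-plurisubharmonicity of $-\log\dist(\cdotp,S)$ from $\kappa_1+\kappa_2=0$ (the tube formula, or equivalently the local Harvey--Lawson statement quoted in the paper's proof of Theorem \ref{th:p-convex}; applying part (d) of that theorem to a half-ball cut off by $S$ is also loose because of the corner) is valid only where the foot point is unique and the distance is below the focal radius, i.e.\ where $\dist(\cdotp,S)\,|\kappa_i|<1$, so it cannot be invoked on far branches beyond that radius. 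Fortunately the step is unnecessary: it suffices to work in a tube of variable thickness lying below the local reach of $S$, which is positive on compact subsets of the neighborhood of $\overline D$ in which $S$ is closed, and your own device of truncating with an exhausting function rather than a constant (needed anyway when $\overline D$ is noncompact) is exactly what keeps the construction inside such a tube. With that correction---or with the paper's cutoff-plus-$h\circ\rho_0$ argument---your proof goes through.
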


%
%
\begin{example}\label{ex:catenoid}
Let $D$ be the domain
\[
	D=\{(x,y,z)\in\r^3: x^2+y^2>\cosh^2 z\}.
\] 
Since the boundary of $D$ is a minimal surface (a catenoid), $D$ is minimally convex
by Corollary \ref{cor:compl-minimal}. Clearly, $D$  does not have any convex boundary point,
and its fundamental group $\pi_1(D)$ equals $\z$.
\qed\end{example}

\begin{remark}\label{rem:homotopy}
Note that the Hessian of a minimal strongly plurisubharmonic function on a domain in $\r^3$ has at most one negative eigenvalue 
at every point. Hence, Morse theory implies that a minimally convex domain $D$ has the homotopy type of a 
$1$-dimensional CW complex; in particular, the higher homotopy groups $\pi_k(D)$ for $k>1$ all vanish. 
Similarly, a $p$-convex domain has the homotopy type of a CW complex of dimension at most $p-1$.  
\qed \end{remark}

%
%
\begin{remark}
\label{rem:mean-convex}
In the literature on minimal surfaces, a smoothly bounded domain $D$ in $\r^n$ 
is said to be (strongly) {\em mean-convex} if the sum of the principal curvatures of $bD$ 
from the interior side is nonnegative (resp.\ positive) at each point.  
This is precisely condition (c) in Theorem \ref{th:p-convex} with $p=n-1$; hence, a smoothly 
bounded domain in $\r^n$ is mean-convex if and only if it is $(n-1)$-convex.
In particular, mean-convex domains in $\r^3$ coincide with smoothly bounded  minimally convex domains. 
Mean-convex domains have been studied as natural barriers for minimal hypersurfaces 
in view of the maximum principle; see  Section \ref{ss:maximum} and Remark \ref{rem:mean-hull}.
Nontrivial proper minimal hypersurfaces in mean-convex domains often arise as solutions to Plateau problems. 
For instance, Meeks and Yau \cite{MeeksYau1982MZ} proved that every null-homotopic Jordan curve  in the 
boundary of a mean-convex domain $D\subset \r^3$ bounds an area minimizing minimal disc in $D$. 
This method does not seem to provide examples of complete minimal surfaces, 
or those normalized by a given bordered Riemann surface other than the disc. 
For a discussion of this subject, see e.g.\ \cite[Section 6.5]{ColdingMinicozzibook}.
\qed \end{remark}

Our proof of Theorem \ref{th:main1} also shows that boundaries of conformal minimal surfaces can be 
pushed to a {\em minimally convex end} of a domain $D\subset \r^3$  as in the following theorem.  
An analogous result in the holomorphic category is  \cite[Theorem 1.1]{DrinovecForstneric2010AJM}.

%
%
%
%
\begin{theorem} \label{th:main1-bis}
Assume that $\Omega\subset D$ are open sets in $\r^3$ and
$\rho\colon \Omega\to (0,+\infty)$ is a smooth minimal strongly plurisubharmonic function
such that for any pair of numbers $0<c_1<c_2$ the set 
$
	\Omega_{c_1,c_2}= \{\bx \in\Omega\colon c_1\le \rho(\bx)\le c_2\}
$ 
is compact. Let $M$ be a compact bordered Riemann surface with nonempty boundary $bM$.  
Every conformal minimal immersion $F_0\colon M\to D$ satisfying $F_0(bM)\subset \Omega$
can be approximated, uniformly on compacts in $\mathring  M=M\setminus bM$, 
by complete conformal minimal immersions $F\colon \mathring M\to D$
such that  $F(z)\in \Omega$ for every $z\in \mathring M$ sufficiently close to $bM$ and 
\begin{equation}\label{eq:infinity}
	\lim_{z\to bM} \rho(F(z))=+\infty.
\end{equation}
\end{theorem}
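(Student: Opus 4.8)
The plan is to reduce Theorem~\ref{th:main1-bis} to the same machinery used for Theorem~\ref{th:main1}, localized near the boundary $bM$. First I would observe that the hypothesis on $\rho$ says precisely that $\rho$ is a (positive) exhaustion function for the ``end'' of $\Omega$ toward which we want to push $bM$: the sublevel and superlevel structure is proper in the sense that $\Omega_{c_1,c_2}$ is compact. Fix a compact set $K\subset\mathring M$ on which we wish to approximate $F_0$, and choose a compact bordered subdomain $M_0\subset\mathring M$ with $K\subset \mathring M_0$, a collar $A=M\setminus \mathring M_0$ (a union of annuli adjacent to $bM$), chosen thin enough that $F_0(A)\subset\Omega$, which is possible since $F_0(bM)\subset\Omega$ is compact and $\Omega$ is open. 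Let $c_0=\min_{bM}\rho\circ F_0>0$; after shrinking $A$ we may also assume $\rho\circ F_0 > c_0/2$ on all of $A$.

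The core is a lifting lemma analogous to Proposition~\ref{prop:lifting}, but carried out only on the collar $A$: given a conformal minimal immersion $G\colon M\to D$ with $G(A)\subset\Omega$ and $\min_{bM}\rho\circ G\ge c$, and given $c'>c$ and $\epsilon>0$, produce a conformal minimal immersion $\wt G\colon M\to D$ which is $\epsilon$-close to $G$ in $\Cscr^0(M_0)$, has the same flux, still satisfies $\wt G(A)\subset\Omega$, has $\rho\circ\wt G > c/2$ on $A$ (so we never lose much ground in the interior of the collar), and has $\min_{bM}\rho\circ\wt G\ge c'$. To do this I would apply the same construction as in the proof of Theorem~\ref{th:main1}: use the family of embedded minimal discs $\Mcal_{\bx}$ from Lemma~\ref{lem:M-discs} along which $\rho$ has a quadratic strict minimum, available at every point of $\Omega\setminus P$ where $P$ is the critical locus of a Morse perturbation of $\rho$; since the construction only perturbs near $bM$ and each $\Mcal_\bx$ is small, the image stays inside $\Omega$; then use the Riemann--Hilbert approximate solution (Theorem~\ref{th:RH}) to drag $G(bM)$ to a higher $\rho$-level while keeping the immersion $\Cscr^0$-close on $M_0$ and, by the usual gluing estimate, not dropping $\rho$ on the rest of the collar by more than a controlled small amount. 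Iterating this lifting with a rapidly increasing sequence of levels $c_j\uparrow+\infty$ and $\epsilon_j$ summable produces, by the standard completeness-of-the-space-of-such-immersions argument, a limit $F\colon\mathring M\to D$ with $\lim_{z\to bM}\rho(F(z))=+\infty$ and $F(z)\in\Omega$ near $bM$, approximating $F_0$ on $K$, with $\Flux(F)=\Flux(F_0)$; along the way the superlevel sets $\Omega_{c_j,\infty}$ shrink toward the end, which is what forces $F(z)\in\Omega$ for $z$ near $bM$. To get completeness, I would interleave, as in the proof of Theorem~\ref{th:main1}, the boundary-distance-increasing step of \cite[Lemmas~4.1 and 4.2]{AlarconDrinovecForstnericLopez2015}, applied on the collar so as not to disturb the approximation on $K$, between consecutive lifting steps; this is purely intrinsic and costs only an arbitrarily small $\Cscr^0$ change, so it is compatible with all the constraints above.

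The main obstacle, and the point where care is needed, is keeping $\rho\circ F$ from dipping below $0$ — or below the relevant level — on the collar $A$ during the lifting, since $\rho$ is only defined on $\Omega$ and a naive Riemann--Hilbert deformation can locally \emph{decrease} $\rho$ by an amount comparable to the size of the discs used. The resolution is the same as in \cite{DrinovecForstneric2007DMJ,DrinovecForstneric2010AJM}: one exploits that the discs $\Mcal_\bx$ can be taken arbitrarily small with the quadratic growth rate of $\rho$ on them controlled uniformly over a compact family (Lemma~\ref{lem:M-discs}), so the unavoidable downward excursion in $\rho$ at each step is $O(\delta)$ with $\delta$ a free small parameter, and a geometric-series choice of the $\delta_j$ guarantees that the total loss is bounded by, say, $c_0/4$; combined with the running lower bound $\rho\circ G_j>c_0/2$ on $A$ this keeps every iterate inside $\Omega$ and the limit well-defined. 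The only additional wrinkle compared with Theorem~\ref{th:main1} is that $\Omega$ may be a proper subset of $D$, so one must also verify that the small minimal discs and the Riemann--Hilbert modifications near $bM$ stay inside $\Omega$ rather than merely inside $D$; this follows because $F_0(A)$ is a compact subset of the open set $\Omega$, so there is a positive distance to $\di\Omega$, and all perturbations at stage $j$ are $O(\delta_j)$-small in $\Cscr^0$. Finally, one checks that the flux is preserved at each step — the Riemann--Hilbert and completeness modifications are designed to fix the flux, cf.\ the corresponding assertion in Theorem~\ref{th:main1} — so $\Flux(F)=\Flux(F_0)$ passes to the limit.
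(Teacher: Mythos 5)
Your route is the paper's own: the published proof of Theorem \ref{th:main1-bis} simply reruns the induction from the proof of Theorem \ref{th:main1}, with the bookkeeping on $\dist(\cdot,bD)$ replaced by bookkeeping relative to the end of $\Omega$ where $\rho\to+\infty$, and your collar-localized lifting scheme interleaved with \cite[Lemma 4.1]{AlarconDrinovecForstnericLopez2015} is exactly that. However, there is one genuine gap: the crossing of critical values of $\rho$. Your core lifting lemma, as you propose to prove it (discs from Lemma \ref{lem:M-discs} at points of a compact $L\subset\Omega\setminus P$, then Theorem \ref{th:RH}), only raises the boundary level across intervals $[c,c']$ containing no critical values: Proposition \ref{prop:lifting} requires the lifted boundary points to stay off the critical locus, and as the levels rise a part of $F_j(bM)$ can be driven onto (a neighborhood of) the stable manifold of an index-one critical point, where no disc with the quadratic estimate \eqref{eq:estimate-c} exists. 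Since $\rho$ is given data in Theorem \ref{th:main1-bis} — you cannot choose it critical-point-free — and a Morse perturbation of it will in general have critical values tending to $+\infty$, your iteration with $c_j\uparrow+\infty$ stalls at the first such value unless you add the device the paper uses at every crossing: nice critical points (Lemma \ref{lem:nice}), general position of the boundary image with respect to the local stable arc $E$, and the local replacement of $\rho$ by the noncritical function $\tau$ of Lemma \ref{lem:passing-model}, all packaged in Lemma \ref{lem:lifting2}. Merely naming ``the critical locus of a Morse perturbation of $\rho$'' does not supply this step.

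A second, repairable, flaw: your stated reason that everything stays in $\Omega$ --- that $F_0(A)$ has positive distance to the boundary of $\Omega$ and the stage-$j$ perturbations are $O(\delta_j)$-small in $\Cscr^0$ --- fails at later stages. The boundary lifts move points by a definite amount (of order $\sqrt{\epsilon_0}$, see condition {\it (4)} of Proposition \ref{prop:lifting}) toward the end of $\Omega$, and that end may approach $b\Omega$ and even $bD$ (the model case is a collar $\Omega$ about a minimally convex boundary component $S\subset bD$ with $\rho=-\log\dist(\cdot,S)$), so no fixed distance to $b\Omega$ survives the iteration. The correct mechanism, which you in fact gesture at earlier in your sketch, is that the lifting discs of Lemma \ref{lem:M-discs}, applied with $\Omega$ as the ambient domain, lie in $\Omega$; the Riemann--Hilbert output stays $\eta_j$-close to these configurations along the modified arcs (condition {\it (ii)} of Theorem \ref{th:RH}); and the level control $\rho\circ F_j\ge\rho\circ F_{j-1}-\delta_j$ together with the compactness of the slabs $\Omega_{c_1,c_2}$ keeps the collar images in $\Omega$ and yields \eqref{eq:infinity} in the limit. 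With these two points inserted, your argument coincides with the paper's.
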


In a typical application of Theorem \ref{th:main1-bis}, 
the set $\Omega$ is a collar around a minimally convex boundary component $S\subset bD$.
(By Theorem \ref{th:p-convex}, a smooth boundary component $S\subset bD$ is minimally convex 
if and only if $-\log \dist(\cdotp,S)$ is minimal plurisubharmonic near $S$.) 
Theorem \ref{th:main1-bis} furnishes a proper complete conformal minimal immersion 
$F\colon \mathring M\to D$ whose 
boundary cluster set is contained in $S$ as shown by condition (\ref{eq:infinity}).

%
%

Next, we consider the class of strongly minimally convex domains.

\begin{definition} \label{def:SMC}
A domain  $D\subset \r^n$ with $\Cscr^2$ boundary is {\em strongly $p$-convex} 
for some $p\in \{1,\ldots,n-1\}$
if it admits a $\Cscr^2$ defining function $\rho$ on a neighborhood $U$ of $bD$
(i.e., $D\cap U=\{\rho<0\}$  and $d\rho\ne 0$ on $bD=\{\bx\in U\colon \rho(\bx)=0\}$)
whose Hessian satisfies the strict inequality in (\ref{eq:trace-bD}):
\[	
	\tr_L \Hess_\rho(\bx) > 0 \ \ \text{for every tangent $p$-plane}\ L\subset T_\bx bD,\ 
	\bx\in bD.
\]
A strongly $2$-convex domain is said to be {\em strongly minimally convex}.
\end{definition}

The analogue of Theorem \ref{th:p-convex} holds in this setting. In particular,
a bounded domain $D\Subset\r^n$ with $\Cscr^2$ boundary is strongly $p$-convex for some $p\in \{1,\ldots,n-1\}$
if and only if the principal curvatures $\kappa_1\le \kappa_2\le\ldots\le \kappa_{n-1}$ 
of $bD$ at any point $\bx\in bD$ satisfy
$
 	\kappa_1 +  \kappa_2 + \cdots + \kappa_{p} > 0.
$
Note that $D$ is strongly $(n-1)$-convex if and only if it is strongly mean-convex 
(see Remark \ref{rem:mean-convex}).

Our next result improves Theorem \ref{th:main1} for bounded strongly minimally convex domains.

%
%
\begin{theorem}\label{th:main2}
Let $D$ be a bounded strongly minimally convex domain in $\r^3$ (Definition \ref{def:SMC}).
Given a compact bordered Riemann surface $M$ with nonempty boundary $bM$ and
a conformal minimal immersion $F_0\colon M\to D$, we can approximate $F_0$,   
uniformly on compacts in $\mathring  M$, by continuous maps 
$F\colon M\to \overline D$ such that $F(bM)\subset bD$, $F\colon \mathring M\to D$ is a
proper complete conformal minimal immersion, $\Flux(F_0)=\Flux(F)$, and 
\begin{equation}\label{eq:root}
	 \sup_{\zeta\in M} \|F(\zeta)-F_0(\zeta)\|  \le C\sqrt{\max_{\zeta\in bM} \dist(F_0(\zeta),bD)}
\end{equation}
for some constant $C>0$ depending only on $D$.
\end{theorem}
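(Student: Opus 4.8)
The plan is to derive Theorem~\ref{th:main2} from Theorem~\ref{th:main1-bis}, exploiting the fact that a bounded strongly minimally convex domain admits a collar on which $-\log\dist(\cdotp,bD)$ is minimal strongly plurisubharmonic, together with a careful bookkeeping of constants to get the square-root estimate \eqref{eq:root}. First I would fix a defining function $\rho_0$ for $D$ on a neighborhood $U$ of $bD$ witnessing strong minimal convexity, and observe (as in the analogue of Theorem~\ref{th:p-convex} for the strong case) that by shrinking $U$ we may assume $\rho:=-\log(-\rho_0)$, or equivalently $-\log\dist(\cdotp,bD)$, is minimal strongly plurisubharmonic on the collar $\Omega:=U\cap D$; since $D$ is bounded and $bD$ is $\Cscr^2$, the superlevel sets $\Omega_{c_1,c_2}$ are compact and $\rho\colon\Omega\to(0,+\infty)$ is proper towards $bD$. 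This puts us exactly in the hypotheses of Theorem~\ref{th:main1-bis} with this pair $(\Omega,D)$.

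Next I would split into two cases according to whether $F_0(bM)$ already lies in the collar $\Omega$. If $F_0(bM)\subset\Omega$, apply Theorem~\ref{th:main1-bis} directly to obtain a complete conformal minimal immersion $F\colon\mathring M\to D$ with $\rho(F(z))\to+\infty$ as $z\to bM$, which by properness of $\rho$ forces $\dist(F(z),bD)\to 0$; hence $F$ extends continuously to $M$ with $F(bM)\subset bD$, and $F$ is proper into $D$. In the general case, $F_0(bM)$ need not be in $\Omega$; here I would first do a preliminary $\Cscr^0$-small deformation pushing $F_0$ to a conformal minimal immersion $F_1\colon M\to D$ with $F_1(bM)\subset\Omega$ and $\Flux(F_1)=\Flux(F_0)$—this is a standard consequence of the approximation/deformation techniques for conformal minimal immersions (the same Riemann--Hilbert/lifting machinery used to prove Proposition~\ref{prop:lifting}), and in fact one already gets this for free from an intermediate step in the proof of Theorem~\ref{th:main1}. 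Then apply the $F_0(bM)\subset\Omega$ case to $F_1$. The flux is preserved throughout because each elementary modification in the lifting procedure preserves flux, as recorded in Theorem~\ref{th:main1} and Theorem~\ref{th:main1-bis}.

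The delicate point, and the main obstacle, is the quantitative estimate \eqref{eq:root}: the displacement $\|F-F_0\|$ must be controlled by $C\sqrt{\max_{\zeta\in bM}\dist(F_0(\zeta),bD)}$, i.e.\ of the \emph{square root} of the initial boundary distance. This square-root loss is the exact analogue of the phenomenon in \cite[Theorem~1.1]{DrinovecForstneric2010AJM} and reflects the fact that the minimal discs $\Mcal_\bx$ furnished by Lemma~\ref{lem:M-discs} have radius comparable to $\sqrt{\epsilon}$ when one wants $\rho$ to increase by $\epsilon$ along them (the quadratic growth of $\rho$ on $\Mcal_\bx$). To obtain it I would run the recursive lifting scheme of Proposition~\ref{prop:lifting} keeping track of the total displacement: at the first step, lifting $F_0(bM)$ from its level $\approx\delta:=\max_{\zeta\in bM}\dist(F_0(\zeta),bD)$ (equivalently $\rho\approx-\log\delta$) to a fixed larger level costs a $\Cscr^0$-displacement $\lesssim\sqrt{\delta}$ by the Riemann--Hilbert estimate of Theorem~\ref{th:RH} applied along the $\sqrt{\delta}$-sized discs; subsequent liftings and the completeness-increasing steps of \cite[Lemmas~4.1 and 4.2]{AlarconDrinovecForstnericLopez2015} are then performed with displacements forming a convergent geometric series whose total is again $\lesssim\sqrt{\delta}$ (with the constant $C$ absorbing the geometry of $D$ near $bD$ and the uniform bounds on the disc family). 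Summing and taking $\delta$ small gives \eqref{eq:root}; for $\delta$ bounded away from $0$ the estimate is vacuous after enlarging $C$, so no loss of generality there.

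Thus the proof is essentially an application of Theorem~\ref{th:main1-bis} to the logarithmic-distance collar of a strongly minimally convex domain, upgraded by a quantitative run of the same inductive construction to yield the boundary-distance estimate; the only genuinely new work is the constant-tracking in that first lifting step, which is where the $\sqrt{\cdotp}$ enters.
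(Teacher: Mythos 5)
Your plan has a genuine gap at the central new point of Theorem \ref{th:main2}, namely the continuity of $F$ up to $bM$ with $F(bM)\subset bD$. In your second paragraph you apply Theorem \ref{th:main1-bis} with $\rho=-\log\dist(\cdotp,bD)$ on a collar and argue that $\rho(F(z))\to+\infty$ forces $\dist(F(z),bD)\to 0$, ``hence $F$ extends continuously to $M$.'' That inference is false: $\dist(F(z),bD)\to 0$ only says the boundary cluster set of $F$ lies in $bD$; it does not prevent the cluster set over a single boundary point of $M$ from being a nondegenerate continuum in $bD$, so no continuous extension follows. Theorem \ref{th:main1-bis} is proved by a limit of immersions converging only on compacts of $\mathring M$, and continuity at $bM$ cannot be extracted from its statement a posteriori. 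The paper gets continuity by building it into the induction: the approximating sequence $F_j$ is made uniformly Cauchy on all of $M$ via the displacement bound \emph{(4)} of Proposition \ref{prop:lifting}, applied with dyadically shrinking level increments $a_j=2^{-j}a_0$, $\eta_j=2^{-j-1}|a_0|$, which yields $\|F_j-F_{j-1}\|_{0,M}\le C_0\sqrt{2^{-j}}\sqrt{|a_0|}$ (condition (vii$_j$)); uniform convergence on $M$ then gives the continuous extension, $F(bM)\subset bD$, and the estimate \eqref{eq:root} simultaneously, using that $|\rho|$ is comparable to $\dist(\cdotp,bD)$ near $bD$.

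Your third paragraph does gesture at this quantitative rerun, but as set up it does not close the gap either. Working with $\rho=-\log\dist(\cdotp,bD)$ on a collar inside $D$, the constants $C_0$ and $\epsilon_0$ of Proposition \ref{prop:lifting} are attached to a fixed compact set $L$ in the domain of $\rho$ away from its critical points; as the boundary of the surface is lifted toward $bD$ the relevant compact sets change, and the null discs of Lemma \ref{lem:M-discs} must shrink to stay inside the domain, so uniformity of the constants (and hence your claimed geometric series) is exactly what needs proof and is not automatic. The paper avoids this by exploiting strong minimal convexity to choose a minimal strongly plurisubharmonic defining function $\rho$ on an open set $D'\supset\overline D$ with $D=\{\rho<0\}$, so that a single compact set $L=\{-\eta\le\rho\le\eta\}$, which straddles $bD$, serves for every lifting step with one uniform constant $C_0$; the dyadic scheme then runs with uniform constants, the boundary levels $a_j\to 0^-$ force $F(bM)\subset bD$, and $\sum_j C_0\sqrt{2^{-j}}\sqrt{|a_0|}$ gives \eqref{eq:root}. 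Your reduction via Theorem \ref{th:main1} to the case $F_0(bM)$ close to $bD$ (with the estimate vacuous otherwise) and the flux bookkeeping are fine, but the continuity-at-the-boundary mechanism and the uniformity of constants across $bD$ are the essential content and are missing from your argument.
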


The improvement over Theorem \ref{th:main1} is that the approximating map $F$ can now be chosen continuous up to the 
boundary of $M$, so $F(bM)\subset bD$ is a union of finitely many closed curves, and we have the estimate 
\eqref{eq:root}. Since $F$ is complete, the minimal surface $F(\mathring M)\subset D$ 
has infinite area, and hence its boundary $F(bM)$ is necessarily non-rectifiable in view of the isoperimetric inequality. 
The corresponding result for smoothly bounded strongly convex domains in $\r^n$ for any $n\ge 3$ is 
\cite[Theorem 1.2]{AlarconDrinovecForstnericLopez2015};
see also \cite{Alarcon2010AIP} for a previous partial result in this line.  As in the latter result,  we are unable to 
achieve that $F$ be a topological embedding on $bM$, so $F(bM)$ needs not consist of Jordan curves.

Theorem \ref{th:main2} implies the following corollary.

\begin{corollary}\label{cor:str-convex}
Every domain $D\subset \r^3$ having a $\Cscr^2$ strongly minimally convex boundary point contains complete properly 
immersed minimal surfaces extending continuously up to the boundary and normalized by any given bordered 
Riemann surface. 
\end{corollary}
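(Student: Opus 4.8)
\textit{Proof proposal.}
The plan is to deduce the statement from Theorem~\ref{th:main2} applied to a small bounded strongly minimally convex domain carved out of $D$ near the given boundary point, the key point being that the estimate \eqref{eq:root} forces the boundary of the resulting minimal surface to lie in $bD$. Let $p\in bD$ be a $\Cscr^2$ strongly minimally convex boundary point. Since $\bx\mapsto\kappa_1(\bx)+\kappa_2(\bx)$ is continuous on $bD$ and positive at $p$, there is a relatively open neighbourhood $S\subset bD$ of $p$ which is a $\Cscr^2$ strongly minimally convex hypersurface; strong minimal convexity being a local condition on the boundary, the strong analogue of Theorem~\ref{th:p-convex}(d) (see the discussion following Definition~\ref{def:SMC}) provides an open set $U\supset S$ in $\r^3$ such that $-\log\dist(\cdot,bD)$ is minimal strongly plurisubharmonic on $D\cap U$. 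With $\nu$ the inner unit normal to $bD$ at $p$, pick $0<d<r$ so small that $\overline B\subset U$, where $B=\{\bx\in\r^3:|\bx-(p+d\nu)|<r\}$; then $p\in B$ and $\dist(p,bB)=r-d>0$. On $D\cap B$ the function $-\log\dist(\cdot,bD)$ is minimal strongly plurisubharmonic and tends to $+\infty$ along $\overline B\cap bD$, while $-\log\dist(\cdot,bB)$ is strongly convex near $bB$ and tends to $+\infty$ along $D\cap bB$; replacing the two by their regularized maximum in a thin neighbourhood of the transversal edge $bD\cap bB$ (an operation preserving minimal strong plurisubharmonicity) smooths the corner away from $p$ and yields, in the standard way, a bounded strongly minimally convex domain $\Omega\subset D\cap B$ with $\Cscr^2$ boundary that coincides with $D\cap B$ off that neighbourhood of the edge. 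In particular $\Omega$ coincides with $D$ near $p$, so there is a relatively open piece $S'$ of $b\Omega$ with $p\in S'\subset bD$; put $\Sigma:=b\Omega\setminus S'$, a closed set with $3\eta_0:=\dist(p,\Sigma)>0$.

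Now let $C=C(\Omega)>0$ be the constant of \eqref{eq:root} for the domain $\Omega$. Every compact bordered Riemann surface carries a conformal minimal immersion into $\r^3$ (see the references in Section~\ref{sec:intro}); composing with a homothety and a translation we obtain a conformal minimal immersion $G_0\colon M\to\Omega$ whose image lies in a ball about a point of $\Omega$ chosen so close to $p$, and so small, that $\max_{\zeta\in bM}\dist(G_0(\zeta),b\Omega)<(\eta_0/C)^2$ and $\min_{\zeta\in bM}\dist(G_0(\zeta),\Sigma)>2\eta_0$. Applying Theorem~\ref{th:main2} to $\Omega$ and $G_0$ yields a continuous map $G\colon M\to\overline\Omega$ with $G(bM)\subset b\Omega$, restricting to a proper complete conformal minimal immersion $\mathring M\to\Omega$ normalized by $M$ and, by \eqref{eq:root}, satisfying $\sup_{\zeta\in M}\|G(\zeta)-G_0(\zeta)\|\le C\sqrt{\max_{\zeta\in bM}\dist(G_0(\zeta),b\Omega)}<\eta_0$. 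For every $\zeta\in bM$ we then get $\dist(G(\zeta),\Sigma)\ge\dist(G_0(\zeta),\Sigma)-\|G(\zeta)-G_0(\zeta)\|>2\eta_0-\eta_0=\eta_0>0$, whence $G(bM)\cap\Sigma=\emptyset$ and therefore $G(bM)\subset b\Omega\setminus\Sigma=S'\subset bD$. In particular $G\colon\mathring M\to D$ is proper, and $G$ is the desired complete proper conformal minimal immersion of $\mathring M$ into $D$, continuous up to $bM$ with $G(bM)\subset bD$ and normalized by $M$.

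The only step I expect to require genuine care is the carving: one must check that the corner of $D\cap B$ can be smoothed so as to keep the domain strongly minimally convex with $\Cscr^2$ boundary, which is done exactly as in the standard smoothing of intersections of strongly $p$-convex domains. Everything afterwards is a direct application of Theorem~\ref{th:main2} together with the quantitative bound \eqref{eq:root}, carried out as in the proof of the corresponding corollary in \cite{AlarconDrinovecForstnericLopez2015}.
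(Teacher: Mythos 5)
Your argument is correct and is essentially the paper's own proof: carve a bounded strongly minimally convex domain $\Omega\subset D$ by intersecting $D$ with a small ball about the strongly minimally convex boundary point and smoothing the corner, start from a conformal minimal immersion with image very close to that point, and use Theorem~\ref{th:main2} together with the estimate \eqref{eq:root} to force the boundary $G(bM)$ into the portion of $b\Omega$ lying in $bD$, so that $G\colon\mathring M\to D$ is proper. Your quantitative $\eta_0$-separation argument just makes explicit what the paper leaves implicit, so there is nothing to add.
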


\begin{proof} Assume that $\bx_0\in bD$ is a strongly minimally convex boundary point, i.e., 
such that $\kappa_1(\bx_0)+\kappa_2(\bx_0)>0$.
Then there are a neighborhood $U$ of $\bx_0$ and a strongly minimally convex domain $D'\subset D$
such that $D\cap U=D'\cap U$. (It suffices to intersect $D$ by a small ball around $\bx_0$ and smooth the corners.) 
Given a conformal minimal immersion $F_0\colon M\to D'$ whose image
$F_0(M)$ lies close enough to the point $\bx_0$ (such exists since $M$ is compact), 
the map $F\colon M\to \overline D'$, furnished by Theorem \ref{th:main2},  satisfies $F(bM)\subset bD\cap U$ in view of 
the estimate (\ref{eq:root}), and hence the map $F\colon \mathring M\to D$ is proper.
\end{proof}

Following Meeks and P\'{e}rez \cite[Section 3]{MeeksPerez2004SDG}, a domain $W\subset\r^3$ is said to be 
{\em universal for minimal surfaces} if every complete, connected, properly immersed minimal surface in $W$ is either 
recurrent (when the surface is open), or a parabolic surface with boundary. Since every open bordered Riemann surface 
$\mathring M=M\setminus bM$ is transient,
%
%
%
%
Theorem  \ref{th:main1-bis} and Corollary \ref{cor:str-convex} show that every domain $D\subset\r^3$ 
which has a minimally convex end, or a strongly minimally convex boundary point, fails to be universal for minimal surfaces. 
In particular, there are domains in $\r^3$ which are not universal for minimal surfaces and have no convex boundary points;
for example, the catenoidal domain in Example \ref{ex:catenoid}.

%
%
\begin{remark}\label{rem:precise}
The conclusion of Theorem \ref{th:main1-bis} fails along a compact smooth boundary component 
$S \subset bD$ which is {\em strongly minimally concave}, i.e., $\kappa_1(\bx)+\kappa_2(\bx)<0$ for every point $\bx\in S$. 
Indeed, Theorem \ref{th:p-convex} furnishes an open neighborhood $U$ of $S$ in $\r^3$ 
and a minimal strongly plurisubharmonic function $\phi\colon U\to\r$ that vanishes on $S$
and is positive on $D\cap U$. The maximum principle applied with $\phi$ 
shows that there is no minimal surface in $\overline D\cap U$ with boundary in $S$.
The same argument holds locally near  a smooth strongly minimally concave boundary point
$\bx_0\in bD$; in this case there is a neighborhood $U\subset \r^3$ of $\bx_0$ 
such that there are no proper minimal surfaces in $D\cap U$ with boundary in $bD\cap U$. 

For a {\em complete} proper minimal surface there is another restriction on the location of
its boundary points. Assume that $D \subset\r^3$ is a domain with $\Cscr^2$ boundary and $F \colon \d\to D$ 
is a complete conformal proper minimal immersion from the disc $\d=\{\zeta\in \c\colon |\zeta|< 1\}$
extending continuously to $\overline\d$.
Then the boundary curve $F(b\d)\subset bD$ does not contain any strongly concave boundary 
points of $D$ (see \cite{AlarconNadirashvili2008MZ}).
This is especially relevant in connection to Theorem  \ref{th:main2}. 
However, we do not know whether $F(b\d)$ could contain a strongly minimally concave
boundary point; the following remains an open problem.

\begin{problem}\label{prob:optimal}
Let $D$ be a smoothly bounded domain in $\r^3$ and $F \colon \d \to D$ 
be a complete conformal proper minimal immersion extending continuously to $\overline\d$
(hence $F(b\d)\subset bD$). Do we have $\kappa_1(\bx)+\kappa_2(\bx)\geq 0$ for every point $\bx\in F(b\d)$?
\qed\end{problem}
\end{remark}

We now illustrate by a couple of examples that Theorem \ref{th:main1} fails in general
for domains in $\r^3$ which are not minimally convex. Since every  domain in $\r^3$ is 3-convex 
(i.e., it admits a strongly subharmonic exhaustion function see \cite{GreeneWu1975,Demailly1990}),
we see in particular that the hypothesis of $2$-convexity cannot be replaced by 3-convexity
in Theorem \ref{th:main1}.

\begin{example}\label{ex:obstacles}
We exhibit a simply connected domain $D\subset \r^3$ of the form $D=2\b\setminus K$, where 
$\b$ is the unit ball of $\r^3$ and $K$ is a compact set contained in a thin shell
around the unit sphere $\s=b\b$, such that the image of every proper conformal minimal
disc $F\colon\d\to D$ avoids the ball $\frac{1}{2}\b\subset D$. 
Clearly, Theorem \ref{th:main1} fails in this example even without the completeness condition. 
Note however that $D$ admits complete properly immersed minimal surfaces normalized by any given bordered 
Riemann surface in view of Theorem \ref{th:main1-bis},
applied to the strongly convex boundary component $2\s\subset bD$.

The example is essentially the one given in \cite[Section 5]{ForstnericGlobevnik1992}
in the context of holomorphic discs in domains in $\c^2$. 
We cover the unit sphere $\s\subset \r^3$ by small open spherical caps $C_1,\ldots, C_m$ 
(i.e., every $C_j$ is the intersection  of $\s$ by a half-space defined by an affine plane $H_j \subset\r^3$) 
such that $\bigcup_{j=1}^m \Co(\overline  C_j) \cap \frac{1}{2}\overline\b=\emptyset$. 
(Here, $\Co$ denotes the convex hull.) 
Pick a number $r>1$ so close to $1$ that  $\s\subset \bigcup_{j=1}^m \Co(r C_j)$. 
Choose pairwise distinct numbers $\rho_1,\ldots,\rho_m$ very close to $r$
such that the pairwise disjoint spherical  caps $\Gamma_j=\rho_j C_j$ satisfy 
$\s\subset \bigcup_{j=1}^m \Co(\Gamma_j)$ and $\bigcup_{j=1}^m \Co(\overline \Gamma_j) \cap \frac{1}{2}\overline\b=\emptyset$.
Let $D=2\b\setminus \bigcup_{j=1}^m \overline \Gamma_j$. For any 
proper conformal minimal disc $F\colon \d\to D$, its boundary cluster set
$\Lambda(F)$  (i.e., the set of all limit points $\lim_{j\to\infty} F(\zeta_j)\in bD$ along  sequences $\zeta_j\in \d$
with $\lim_{j\to\infty} |\zeta_j|=1$) is a connected compact set in $bD$; hence it is contained 
in the sphere $2\s$ or in one of the caps $\Gamma_j$. 
Assume now that $F(\zeta_0)\in \frac{1}{2}\overline\b$ for some $\zeta_0\in \d$. 
If $\Lambda(F)\subset \Gamma_j$ for some $j\in \{1,\ldots,m\}$,  then $F(\d)\subset \Co(\Gamma_j)$
by the maximum principle, a contradiction since $\Co(\Gamma_j)$ does not intersect the ball 
$\frac{1}{2}\overline\b$. If on the other hand $\Lambda(F)\subset 2\s$,  there is 
a point $\zeta_1\in\d$ with $F(\zeta_1)\in \s$. Pick $j\in \{1,\ldots,m\}$ such that $F(\zeta_1) \in \Co(\Gamma_j)$.
Since $F$ has no cluster points on $\Gamma_j$, the set 
$U =\{\zeta\in\d\colon F(\zeta)\in \Co(\Gamma_j)\}$ 
is a nonempty relatively compact domain in $\d$, and $F(bU)$ lies in the affine plane $H_j$ 
which determines the spherical cap $\Gamma_j$. 
By the maximum principle it follows that $F$ maps all of $U$, and hence the whole disc $\d$,
into $H_j$, a contradiction. 
\qed\end{example}

Mart\'in, Meeks and Nadirashvili  constructed bounded domains in $\r^3$ which do  not admit any proper 
complete minimal surfaces of finite topology (see \cite{MartinMeeksNadirashvili2007AJM}). 
In the next example we show that the collection in \cite{MartinMeeksNadirashvili2007AJM} includes a domain $D\subset\r^3$
which carries no proper minimal discs, irrespectively of completeness. 
A similar result in the holomorphic category is due to Dor \cite{Dor1996} who 
constructed a bounded domain $D$ in $\c^m$ for any $m\ge 2$ which does not
admit any proper holomorphic discs.

\begin{example}\label{ex:no-disc}
Let $S$ be the cylindrical shell 
\[
	S=\left\{(x,y,z)\in\r^3 : 1<\|(x,y)\|=\sqrt{x^2+y^2}<2,\; 0<z<1\right\}.
\]
For $0<t<1$, let $C_t:=S\cap\{(x,y,z)\in\r^3\colon z=t\}$ denote the planar round open annulus
obtained by intersecting the cylinder $S$ with the plane $z=t$. For $j\in\n$, 
denote by $C_{t,j}$ the planar round compact annulus $C_{t,j}=\{(x,y,z)\in C_t\colon 1+\frac1{2j}\le \|(x,y)\| \leq 2-\frac1{2j}\}$. Obviously, $bC_{t,j}=\{(x,y,z)\in \r^3\colon \|(x,y)\|\in\{1+\frac1{2j},2-\frac1{2j}\},\, z=t\}$.  Let $t_1,t_2,t_3,\ldots$ denote the sequence 
\[
	\frac12 \,,\, \frac13 \,,\, \frac23 \,,\, \frac14 \,,\, \frac24 \,,\, \frac34 \,,\, 
	\cdots \,,\, \frac1{n} \,,\, \frac2{n} \,,\, \cdots \,,\, \frac{n-1}{n} \,,\, \cdots.
\]
Set $\Gamma=\bigcup_{j\in\n} bC_{t_j,j}\subset S$ and $D=S\setminus \Gamma$.
By \cite[proof of Theorem 1]{MartinMeeksNadirashvili2007AJM}, $D$ is a domain in $\r^3$ and the boundary cluster set $\Lambda(E)\subset \overline D\setminus D$ of any proper minimal annular end $E\subset D$ lies in a horizontal plane of $\r^3$. By the maximum principle, this implies that every proper minimal disc $\d\to D$ is contained in a horizontal plane, but clearly $D$ does not admit any such discs. More generally, $D$ does not carry any proper minimal surfaces of finite genus and with a single end.
\qed\end{example}

All minimal surfaces in Theorems \ref{th:main1}, \ref{th:main1-bis}, and \ref{th:main2} 
are images of bordered Riemann surfaces, hence finitely connected. 
If one does not insist on the approximation and the control of the conformal structure
on these surfaces, then our methods also give complete proper minimal surfaces of 
arbitrary topological type.

\begin{corollary}\label{co:inf-top}
If $D$ is a domain in $\r^3$ which has a minimally convex end in the sense of Theorem  \ref{th:main1-bis},
or a strongly minimally convex boundary point, then every open orientable smooth surface $S$ carries a 
complete proper minimal immersion $S\to D$ with arbitrary flux.
\end{corollary}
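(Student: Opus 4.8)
The plan is to reduce Corollary \ref{co:inf-top} to the bordered case (Theorem \ref{th:main1-bis} or Corollary \ref{cor:str-convex}) by a standard exhaustion argument, writing the open orientable surface $S$ as an increasing union of compact bordered domains and building the complete proper minimal immersion as a limit of conformal minimal immersions on these pieces. First I would fix a smooth exhaustion $M_1\subset M_2\subset\cdots$ of $S$ by connected compact bordered surfaces with $M_j\subset \mathring M_{j+1}$ and $\bigcup_j M_j = S$; here each $M_j$, once we equip $S$ with a complex structure (any open orientable surface carries one, being parallelizable), is a compact bordered Riemann surface. Since the constructions in this paper preserve flux, I would also fix in advance the desired flux homomorphism on $H_1(S,\z)$ and arrange, using standard existence results for conformal minimal immersions with prescribed flux (as in \cite{AlarconLopez2013MA,AlarconDrinovecForstnericLopez2015}), an initial conformal minimal immersion $F_0\colon M_1\to D$ whose image lies in the relevant end/collar of $D$ (in the minimally convex end case) or near the strongly minimally convex boundary point (invoking $D'\subset D$ as in the proof of Corollary \ref{cor:str-convex}), so that its flux on $H_1(M_1,\z)$ is the prescribed one.

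The core of the argument is an inductive step: given a conformal minimal immersion $F_j\colon M_j\to D$ with the right flux and with $F_j(bM_j)$ already pushed close to the end of $D$ (so that $\rho\circ F_j$ is large on $bM_j$, in the notation of Theorem \ref{th:main1-bis}), I would first extend $F_j$ to a conformal minimal immersion of the next piece $M_{j+1}$ — this is an extension across a finite collection of handles/ends, handled by the Mergelyan-type and general position techniques underlying the earlier theorems — and then apply the boundary-lifting and completeness-increasing machinery of Theorem \ref{th:main1-bis} (or Theorem \ref{th:main2}/Corollary \ref{cor:str-convex}) to this immersion on $M_{j+1}$. Doing so, I obtain $F_{j+1}\colon M_{j+1}\to D$ with prescribed flux, with $\rho\circ F_{j+1}$ large on $bM_{j+1}$, with $F_{j+1}$ close to $F_j$ uniformly on $M_{j-1}$ (say within $2^{-j}$), and — exploiting the completeness technique — with the intrinsic boundary distance in $M_{j+1}$ from $M_{j-1}$ to $bM_{j+1}$ exceeding $j$. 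The uniform estimates on $M_{j-1}$ guarantee that $F:=\lim_{j\to\infty} F_j$ exists as a conformal minimal immersion $S\to D$ (immersivity on each $M_k$ being preserved for the tail $j\ge k+2$ if the $2^{-j}$ are chosen small relative to the derivatives of $F_{k+2}$), the divergence of the boundary distances makes $F$ complete, and the control $\rho\circ F_j\to\infty$ on the shrinking collars forces $F$ to be proper into $D$.

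The main obstacle, and the step requiring the most care, is the topological extension $F_j\rightsquigarrow F_{j+1}$ across the new topology of $M_{j+1}\setminus M_j$ while (i) keeping the immersion conformal and nondegenerate, (ii) realizing the prescribed flux on the newly created cycles, and (iii) not spoiling the control already achieved on $bM_j$ — i.e. keeping $\rho\circ F$ large there so the later lifting step starts from the right place. In the convex/strongly convex setting this is exactly the kind of argument carried out in \cite{AlarconDrinovecForstnericLopez2015} (and in the immersion-with-prescribed-flux literature), and the point of Corollary \ref{co:inf-top} is that it goes through verbatim once one has the bordered results of this paper, since minimal convexity of the end (resp. of $D'$) is all that the lifting and completeness lemmas use; I would therefore phrase the proof as an induction whose inductive step cites Theorem \ref{th:main1-bis} and Corollary \ref{cor:str-convex} for the analytic content, indicating only briefly how the topology of $S$ is absorbed. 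A secondary technical point to check is that when $S$ has infinitely generated $H_1$, prescribing "arbitrary flux" just means prescribing a homomorphism $H_1(S,\z)\to\r^3$, and at each finite stage only finitely many cycles are involved, so there is no compatibility issue beyond consistency with the restriction to $H_1(M_j,\z)$, which the construction respects automatically.
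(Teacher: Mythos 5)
Your overall induction scheme (exhaustion by compact bordered pieces, boundary lifting via Theorem \ref{th:main1-bis}/Lemma \ref{lem:lifting2}, the completeness trick, and flux bookkeeping at each finite stage) is exactly the skeleton of the paper's argument, which likewise proceeds recursively and refers to \cite[proof of Theorem 1.4 (b) and Corollary 1.5 (b)]{AlarconDrinovecForstnericLopez2015} for the topological bookkeeping. However, there is a genuine gap in your plan: you fix a complex structure on $S$ at the outset and exhaust $S$ by compact bordered Riemann surfaces $M_j$ carrying that fixed structure. The step you label as the main obstacle --- extending $F_j\colon M_j\to D$ across the new topology of $M_{j+1}\setminus M_j$ while keeping the image in $D$ (indeed in the collar where $\rho\circ F$ is large) --- is not supported by the tools you invoke: the Mergelyan theorem for conformal minimal immersions \cite{AlarconLopez2012JDG,AlarconForstnericLopez2016MZ} produces an extension into $\r^3$ that approximates $F_j$ on $M_j$, but it gives no control whatsoever on the image of the new handles and annuli when their conformal moduli are prescribed in advance, so there is no way to confine the extension to $D$.

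Moreover, no repair can make your argument work as stated, because with a fixed conformal structure the conclusion is simply false in part of the corollary's range: take $D$ bounded with a strongly minimally convex boundary point (e.g.\ a ball) and give $S$ a parabolic structure, say $S=\c$. A conformal minimal immersion $\c\to D$ has bounded harmonic coordinate functions, hence is constant; so no complete proper conformal minimal immersion of that fixed Riemann surface into $D$ exists. This is precisely why Corollary \ref{co:inf-top} asserts only the topological type of $S$ and the paper stresses that one gives up control of the conformal structure: in the paper's construction the handles and ends are attached with \emph{freely chosen} conformal structure (thin bands glued near $bM_j$, where $\rho\circ F_j$ is already large), which is what allows the added piece to stay inside $D$ and close to the existing boundary image before the next application of Lemma \ref{lem:lifting2}. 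Your flux discussion and the limiting argument (convergence, immersivity, completeness, properness) are fine, but the proof must build the complex structure of the limit surface along the way rather than prescribe it, and then identify the limit with $S$ only up to diffeomorphism.
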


Corollary \ref{co:inf-top} is proved at the end of Section \ref{sec:Proof1}.
For domains $D\subset\r^n$ $(n\geq 3)$ that are convex, or have a $\Cscr^2$ smooth strictly convex boundary point, 
this has already been established in \cite{AlarconDrinovecForstnericLopez2015};
for $n=3$ see also Ferrer, Mart\'in, and Meeks \cite{FerrerMartinMeeks2012AM}.

Theorems \ref{th:main1}, \ref{th:main1-bis} and \ref{th:main2} show that every minimally convex domain in $\r^3$ admits 
many complete properly immersed minimal surfaces of {\em hyperbolic} conformal type. In contrast, the following 
rigidity type result shows that only very few minimally convex domains contain  a complete proper minimal  
surface $S\subset \r^3$ of {\em finite total curvature}. 
(Note that these are the simplest complete minimal surfaces of {\em parabolic} conformal type.)

%
%
\begin{theorem} \label{th:FTC}
Let $S\subset \r^3$ be a complete connected properly immersed minimal surface with finite total curvature
in $\r^3$. If $D\subset\r^3$ is a smoothly bounded minimally convex domain containing $S$, then $D=\r^3$ or $S$ is a plane;
in the latter case, the connected component of $D$ containing $S$ is a slab, a halfspace, or $\r^3$.
\end{theorem}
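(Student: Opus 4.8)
The plan is to combine the structure theory of complete minimal surfaces of finite total curvature with the maximum principle for minimal surfaces in minimally convex --- equivalently, mean-convex --- domains (Section \ref{ss:maximum}), arguing by contradiction in the main case. First I would pass to the connected component $D_0$ of $D$ containing $S$; since restricting a strongly $2$-plurisubharmonic exhaustion of $D$ to $D_0$ gives such a function on $D_0$, $D_0$ is again minimally convex, and it suffices to show that either $D_0=\r^3$ (so $D=\r^3$) or $S$ is a plane and $D_0$ is a slab, a halfspace, or $\r^3$. By the theorems of Huber and Osserman (and Jorge--Meeks) I would record that $S$ is conformally $\overline S\setminus\{p_1,\dots,p_k\}$ for a compact Riemann surface $\overline S$, hence of parabolic conformal type; that $S$ is proper in $\r^3$; that outside a compact set each end $E_j$ is an $m_j$-sheeted graph over the exterior of a disc in an affine plane $\Pi_j$, the graphing function growing at most logarithmically when $m_j=1$ and sublinearly in general; and that the Gauss map extends holomorphically to $\overline S$ and is constant precisely when $S$ is a plane.

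With $F\colon S\to D_0$ the immersion and $\rho$ a strongly $2$-plurisubharmonic exhaustion of $D_0$, the function $h:=\rho\circ F$ is subharmonic on $S$ by Proposition \ref{prop:characterizations}, and since $\rho$ is \emph{strongly} $2$-plurisubharmonic and $S$ has vanishing mean curvature, in fact $\Delta_S h>0$. Moreover $h(z)\to+\infty$ as $z\to p_j$ for every $j$, because $S$ is proper in $\r^3$, lies in $D_0$, and $\rho$ is an exhaustion; thus $h$ is a strictly subharmonic exhaustion of the parabolic surface $S$. The feature of parabolicity I would use is that $S$ carries no nonconstant nonnegative superharmonic function: applied to the harmonic coordinate functions $\langle F(\cdot),v\rangle$ for $v\in\s^2$, this shows that if $S$ lies in a closed halfspace then $S$ is contained in, hence equals, an affine plane; and transplanted to $\r^2$ it forbids a bump on a flat piece of $bD_0$, since smoothing such a bump would produce a nonconstant nonnegative superharmonic function on $\r^2$ vanishing at infinity.

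The heart of the matter is to exclude the case where $S$ is \emph{not} a plane and $D_0\subsetneq\r^3$. In that case I would prove $\dist(S,bD_0)=+\infty$, which forces $bD_0=\emptyset$ and $D_0=\r^3$, a contradiction. Suppose instead $\dist(S,bD_0)=\delta<\infty$; then the open $\delta$-neighborhood of $S$ lies in $D_0$, and choosing $x_n\in S$, $y_n\in bD_0$ with $|x_n-y_n|\to\delta$, one has $B(x_n,\delta)\subset D_0$ and $S\cap B(y_n,\delta)=\emptyset$ for each $n$. If the $x_n$ stay bounded, a limit gives $x_*\in S$, $y_*\in bD_0$ with $|x_*-y_*|=\delta$, $B(x_*,\delta)\subset D_0$, and $S\cap B(y_*,\delta)=\emptyset$; if the $x_n$ escape to infinity along an end of $S$, translating so that the configuration stabilizes reduces to the same picture with $S$ replaced by an almost-flat minimal graph. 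In either case one runs the maximum principle as in the halfspace theorem of Hoffman and Meeks --- sliding a family of catenoidal minimal barriers against the mean-convex boundary $bD_0$ --- to reach a contradiction with the minimal convexity of $D_0$. Making this comparison rigorous --- selecting and positioning the barriers and controlling the limits --- is the technical core of the proof and the step I expect to be the main obstacle; it is also where minimal convexity, rather than mere $3$-convexity, is genuinely used.

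It remains to treat $S=\Pi$ a plane, say $\Pi=\{x_3=0\}\subset D_0$. I would sweep the horizontal planes $\{x_3=t\}$: each is minimal, $bD_0$ is mean-convex, and bumps are forbidden by the previous paragraph, so the set $\{t:\{x_3=t\}\subset D_0\}$ is an open interval $(a,b)\ni 0$ with $D_0\supseteq\{a<x_3<b\}$, and $D_0$ meets neither $\{x_3\ge b\}$ nor $\{x_3\le a\}$: pushing a horizontal plane up (or down) to its first contact with $bD_0$ would, by the boundary maximum principle, either force a boundary point with $\kappa_1+\kappa_2<0$ --- excluded by Theorem \ref{th:p-convex} --- or make $bD_0$ contain an entire horizontal plane, whence a continuation argument gives $D_0=\{a<x_3<b\}$ exactly. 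According as $a$ and $b$ are finite this is a slab, a halfspace, or $\r^3$, completing the proof.
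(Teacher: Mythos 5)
Your overall skeleton (nearest point attained at a finite point $\Rightarrow$ $S$ is a plane by the maximum principle; then a sweeping argument in the planar case) matches the paper's reduction, but the decisive step is missing. The case you yourself flag as ``the technical core and the main obstacle'' --- ruling out contact at infinity between a non-flat finite total curvature end and $bD_0$ --- is exactly what the paper proves as a separate maximum principle at infinity (Theorem \ref{th:FTC-infinity}), and your sketch for it would not go through as stated. Translating along a divergent sequence in an end and ``sliding catenoidal barriers as in Hoffman--Meeks'' presupposes that the limiting configuration is a minimal surface near a classically mean-convex, smooth boundary; but a minimally convex domain is only assumed to carry a strongly $2$-plurisubharmonic exhaustion, so $bD_0$ need not be $\Cscr^2$ (or regular in any sense), principal curvatures and a ``boundary maximum principle'' are not available, and the translated domains have no obvious compactness. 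Moreover the end is in general an $n$-sheeted sublinear multigraph, not a graph, so planar or catenoidal comparison surfaces do not directly separate it from the boundary. The paper replaces all of this by constructing bespoke barriers: Perron solutions $u_{R,\delta}$ of the Dirichlet problem for the minimal surface equation over the $n$-sheeted covering $A^n_{R_0,R}$ of an annulus, with boundary data $v$ raised by $\delta$ on one circle (Lemma \ref{lem:williams}, with Williams-type barrier functions), then uses the Kontinuit\"atssatz (Proposition \ref{prop:Kontinuitatssatz}) --- which needs only the exhaustion function, not boundary regularity --- to keep the multigraphs $T_{R,\delta}$ inside $D$, and lets $R\to\infty$ (Anderson compactness plus the Meeks--Rosenberg maximum principle at infinity) to conclude that the vertically translated end $E_{d+\delta}$ stays in $\overline D$, contradicting $\dist(E_d,D^c)=0$. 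None of this machinery, nor a workable substitute, is present in your proposal.

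The same regularity issue infects your planar case: you invoke Theorem \ref{th:p-convex} and points of $bD_0$ with $\kappa_1+\kappa_2<0$, but that theorem applies to domains with $\Cscr^2$ boundary, which is not a hypothesis of Theorem \ref{th:FTC}. The paper's planar argument avoids this by sweeping a family of vertical half-catenoids $\Sigma_a$ and applying the Kontinuit\"atssatz to show that $D$ contains a full slab $\{0\le z<\tau_+\}$ around the plane (so $d_+>0$), and then iterating this together with the interior maximum principle (Proposition \ref{prop:max-principle}) to force $\{z=d_+\}\subset bD$ or a contradiction; your ``first contact of a translated plane with $bD_0$'' step should be reworked along these lines, since first contact with a possibly irregular boundary cannot be analyzed by classical boundary comparison. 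In short: the reduction is right, the planar endgame needs repair, and the contact-at-infinity step --- the actual content of the theorem --- is not proved.
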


By a {\em slab} in $\r^3$, we mean a domain bounded by two parallel planes.

In particular, if $D$ is a connected component of $\r^3\setminus S$ where
$S$ is a non-flat properly embedded minimal surface of finite total curvature in $\r^3$, 
then Theorem \ref{th:FTC} shows that $D$ is a {\em  maximal minimally convex domain}, 
in the sense that the only smoothly bounded minimally convex domain containing 
$\overline D$ is $\r^3$ itself.

Theorem \ref{th:FTC} is proved in Section \ref{sec:maximal} as an application of a general maximum principle at infinity for complete, finite total curvature, noncompact minimal surfaces with compact boundary in minimally convex domains of $\r^3$; see Theorem \ref{th:FTC-infinity}. Maximum principles at infinity have been the key in many celebrated classification results in the theory of minimal surfaces; see for instance \cite{MeeksRosenberg2008JDG} and the references therein.  In the proof of Theorem \ref{th:FTC-infinity}, we exploit the geometry of complete minimal surfaces of finite total curvature along with the Kontinuit\"atssatz for conformal minimal surfaces; see Proposition \ref{prop:Kontinuitatssatz} for the latter.

In Section \ref{sec:hulls}, we indicate how the Riemann-Hilbert technique, developed in \cite{AlarconDrinovecForstnericLopez2015},
allows us to extend all main results of the paper \cite{DrinovecForstneric2015TAMS}  to null hulls of compact sets in $\c^n$ 
(see Definition \ref{def:nullhull})  and minimal hulls of compact sets in $\r^n$ (see Definition \ref{def:p-hull}) for any $n\ge 3$. 

After the completion of this paper, Alarc\'on, Forstneri\v c, and L\'opez obtained analogues of 
Theorems \ref{th:main1} and \ref{th:main2} in the non-orientable framework (see \cite{AlarconForstnericLopez2016MAMS}).

%
%

\section{$p$-plurisubharmonic functions and $p$-convex domains}
\label{sec:pPsh}

We begin this preparatory section by summarizing basic results concerning $p$-pluri\-subharmonic 
functions and $p$-convex domains in $\r^n$ which are used in the paper,
referring to the papers of Harvey and Lawson \cite{HarveyLawson2011ALM,HarveyLawson2012AM, HarveyLawson2013IUMJ}  
and the references therein for a more complete account. 
We add the proof of Theorem \ref{th:p-convex} for unbounded domains (see Subsection \ref{ss:strongly-p-convex}) 
and formulate the Kontinuit\"atssatz for minimal submanifolds  (see Proposition \ref{prop:Kontinuitatssatz}).
In Subsection \ref{ss:MPsh-NPsh}, we recall the notion of a {\em null plurisubharmonic
function} and develop one of the main tools that will be used in the proof of 
Theorems \ref{th:main1}, \ref{th:main1-bis}, and \ref{th:main2}.

We denote by $\langle \cdotp,\cdotp\rangle$ and $\|\cdotp\|$ the standard Euclidean inner product and the 
Euclidean norm on $\r^n$, respectively.
We shall use the same notation for the Euclidean norm on $\c^n$.

%
%

\subsection{$p$-plurisubharmonic functions}
Let $\bx=(x_1,\ldots,x_n)$ be coordinates on $\r^n$.  Given a domain $D\subset\r^n$ 
and a $\Cscr^2$  function $u \colon D\to \r$, the {\em Hessian} of $u$ at a point $\bx\in D$ 
is the quadratic form $\Hess_u (\bx)=\Hess_u(\bx;\cdotp)$ on the tangent space $T_\bx\r^n\cong\r^n$, given by 
\begin{equation}\label{eq:Hess}
	\Hess_u(\bx;\xi) = \sum_{j,k=1}^n  \frac{\partial^2 u}{\partial x_j \partial x_k}(\bx)\, \xi_j \xi_k,
	\quad \xi=(\xi_1,\ldots,\xi_n) \in\r^n. 
\end{equation}
The trace of the Hessian is the Laplace operator on $\r^n$:
$
	\tr\,(\Hess_u ) = \triangle u  = \sum_{j=1}^n \frac{\di^2 u}{\di x_j^2}.
$

The Euclidean metric $ds^2=\sum_{j=1}^n dx_j\otimes dx_j$ on $\r^n$ induces 
a Riemannian metric $g=g_M$ on any smoothly immersed submanifold $M\to \r^n$. 
A function $u\in \Cscr^2(D)$ is subharmonic on a submanifold  $M\subset D$ 
if  $\triangle_M (u|_M)\ge 0$, where $\triangle_M$ is the Laplace operator on $M$ associated to the 
metric $g_M$ induced by the immersion. 
In particular, if $L$ is an affine $p$-dimensional subspace of $\r^n$ given by 
\[
	L= \bigl\{\bx(\xi) =\ba+\sum_{j=1}^p \xi_j \bv_j \in\r^n : \xi_1,\ldots,\xi_p\in\r \bigr\},
\]
where $\ba\in  \r^n$ and $\bv_1,\ldots,\bv_p\in\r^n$ is an orthonormal set, then $u$ is subharmonic on $L\cap D$ 
if and only if the function $\xi\mapsto u(\bx(\xi))$ is subharmonic on $\{\xi\in \r^p: \bx(\xi)\in D\}$.

%
%
%
%
\begin{definition}  \label{def:p-psh}
An upper semicontinuous function $u\colon D\to \r\cup\{-\infty\}$ on a domain $D \subset\r^n$ is 
{\em $p$-plurisubharmonic} for some integer $p\in\{1,\ldots,n\}$ if the restriction $u|_{L\cap D}$ 
to any affine $p$-dimensional plane $L\subset \r^n$ is subharmonic on $L\cap D$.  
A $2$-plurisubharmonic function is also called  {\em minimal plurisubharmonic}.
\end{definition}

The set of all $p$-plurisubharmonic functions on $D$ is denoted by $\Psh_p(D)$.
Following the notation introduced in \cite{DrinovecForstneric2015TAMS}, we shall write 
\[
	\Psh_2(D)=\MPsh(D).
\]
It is obvious that $\Psh_1(D) \subset \Psh_{2}(D)\subset \cdots \subset \Psh_{n}(D)$. 
An $n$-plurisubharmonic function on a domain $D\subset \r^n$ is a subharmonic function 
in the usual sense, and a $1$-plurisubharmonic function is a convex function. 
Clearly, $\Psh_p(D)$ is closed under addition and multiplication 
by nonnegative real numbers. Most of the familiar properties of plurisubharmonic functions on domains in $\c^n$ extend to 
$p$-pluri\-sub\-harmonic functions on domains in $\r^n$ (see e.g.\ \cite[Section 6]{HarveyLawson2011ALM}). 
In particular, every  $p$-plurisubharmonic function can be approximated by smooth $p$-plurisubharmonic functions.

%
%
\begin{proposition}[Proposition 2.3 and Theorem 2.13 in \cite{HarveyLawson2013IUMJ}] 
\label{prop:characterizations}  
Let $1\le p\le n$ be integers and $D$ be a domain in $\r^n$. 
The following conditions are equivalent for a function $u\in \Cscr^2(D)$:
\begin{itemize}
\item[\rm (a)]  $u$ is $p$-plurisubharmonic on $D$;
\item[\rm (b)]  $\tr_L \Hess_u(\bx )\ge 0$ for every point $\bx\in D$ and every $p$-dimensional  linear subspace $L\subset \r^n$
(here, $\tr_L$ denotes the trace of the restriction to $L$);
\item[\rm (c)]  If $\lambda_1(\bx)\le \lambda_2(\bx)\le \cdots \le \lambda_n(\bx)$ are the eigenvalues of
$\Hess_u(\bx )$, then 
\begin{equation}\label{eq:smallest-p}
	\lambda_1(\bx) + \cdots + \lambda_p(\bx)\ge 0\quad \text{for every}\ \bx\in D;
\end{equation}
\item[\rm (d)]  $u|_{M}$ is subharmonic on every minimal $p$-dimensional submanifold $M\subset D$.
\end{itemize}
\end{proposition}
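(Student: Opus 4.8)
The plan is to prove the cycle of equivalences $(a)\Leftrightarrow(b)$, then $(b)\Leftrightarrow(c)$, then $(b)\Leftrightarrow(d)$, with the restriction-to-an-affine-plane characterization (Definition \ref{def:p-psh}) serving as the anchor. First I would treat $(a)\Leftrightarrow(b)$. If $L$ is an affine $p$-plane through $\ba$ with orthonormal directions $\bv_1,\dots,\bv_p$, then by the chain rule the Laplacian of $\xi\mapsto u(\ba+\sum_j\xi_j\bv_j)$ on $\r^p$ equals $\sum_{j=1}^p \Hess_u(\bx;\bv_j)=\tr_{L_0}\Hess_u(\bx)$, where $L_0$ is the linear $p$-plane parallel to $L$. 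Since every linear $p$-plane arises this way and the base point $\ba$ ranges over all of $D$, the restriction $u|_{L\cap D}$ is subharmonic for every affine $p$-plane $L$ precisely when $\tr_{L_0}\Hess_u(\bx)\ge 0$ for every $\bx\in D$ and every linear $p$-plane $L_0$. This is the content of $(a)\Leftrightarrow(b)$.

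Next, $(b)\Leftrightarrow(c)$ is linear algebra. Fix $\bx$ and write $A=\Hess_u(\bx)$, a symmetric operator with eigenvalues $\lambda_1\le\cdots\le\lambda_n$. For an orthonormal basis $\bv_1,\dots,\bv_p$ of a $p$-plane $L_0$, $\tr_{L_0}A=\sum_{j=1}^p\langle A\bv_j,\bv_j\rangle$, and by the Courant–Fischer / Ky Fan variational principle the minimum of this quantity over all $p$-dimensional subspaces $L_0$ equals $\lambda_1+\cdots+\lambda_p$, attained on the span of the bottom $p$ eigenvectors. Hence $(b)$ — that $\tr_{L_0}A\ge0$ for every $p$-plane — is equivalent to $\lambda_1(\bx)+\cdots+\lambda_p(\bx)\ge0$, which is $(c)$.

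Finally, $(b)\Leftrightarrow(d)$. The direction $(d)\Rightarrow(b)$ is immediate since affine $p$-planes are minimal $p$-submanifolds, so this reduces to the already-established $(a)\Rightarrow(b)$. For $(b)\Rightarrow(d)$, let $M\subset D$ be a minimal $p$-dimensional submanifold and $\bx\in M$. Choosing normal coordinates on $M$ at $\bx$, the intrinsic Laplacian of $u|_M$ at $\bx$ is $\triangle_M(u|_M)(\bx)=\sum_{j=1}^p\Hess_u(\bx;e_j)+\langle \mathrm{grad}\,u(\bx),\vec H(\bx)\rangle$, where $e_1,\dots,e_p$ is an orthonormal basis of $T_\bx M$ and $\vec H$ is the mean curvature vector of $M$ in $\r^n$; this is the standard second-variation/trace-of-Hessian formula relating the ambient and intrinsic Laplacians. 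Since $M$ is minimal, $\vec H\equiv0$, so $\triangle_M(u|_M)(\bx)=\tr_{T_\bx M}\Hess_u(\bx)\ge0$ by $(b)$, giving subharmonicity of $u|_M$.

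The main obstacle, such as it is, lies in step $(b)\Rightarrow(d)$: one must correctly invoke the formula for the Laplacian of a restricted function on a submanifold in terms of the ambient Hessian and the mean curvature vector, and observe that the mean-curvature term is exactly what vanishes for minimal submanifolds. Everything else is either the definitional unwinding of Definition \ref{def:p-psh} or the Ky Fan eigenvalue inequality, both routine. I would also remark that for $p=n$ the statement degenerates correctly ($\tr A\ge0$, i.e.\ $\triangle u\ge0$), and for $p=1$ it recovers convexity, so no separate boundary cases need attention.
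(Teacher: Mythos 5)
Your proof is correct and follows essentially the same route as the paper's sketch: the equivalences $(a)\Leftrightarrow(b)\Leftrightarrow(c)$ by the chain rule and the eigenvalue variational principle, $(d)\Rightarrow(a)$ via affine planes being minimal, and the key implication $(b)\Rightarrow(d)$ via the formula $\triangle_M(u|_M)=\tr_M\Hess_u - H_M u$ with the mean-curvature term vanishing on minimal submanifolds. You merely fill in the routine details that the paper leaves as ``easily seen.''
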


\begin{proof}[Sketch of proof]
The equivalences (a)$\Leftrightarrow$(b)$\Leftrightarrow$(c) are easily seen, and  (d)$\Rightarrow$(a) is obvious. 
The nontrivial implication (b)$\Rightarrow$(d) follows from the following formula which holds for every smooth submanifold 
$M\subset \r^n$  (cf.\ \cite[Proposition 2.10]{HarveyLawson20092AJM}):
\begin{equation}\label{eq:trace-M}
	\triangle_M(u|_M) =\tr_M \Hess_u - H_M u.
\end{equation}
Here, $\tr_M \Hess_u$ is the trace of the restriction of the Hessian of $u$ to the tangent bundle of $M$ 
and $H_M$ is the mean curvature vector field  of $M$. If $M$ is a minimal submanifold, then $H_M=0$ and 
we get that $\triangle_M(u|_M) =\tr_M \Hess_u\ge 0$.
\end{proof}

%
%
\begin{definition}\label{def:sp-psh}
A function $u\in \Cscr^2(D)$ on a domain $D\subset \r^n$ is {\em strongly $p$-pluri\-subharmonic} if
$\tr_L \Hess_u(\bx )>0$ for every $p$-dimensional  affine linear subspace $L\subset \r^n$ and every point $\bx\in D\cap L$.
Equivalently, if $\lambda_1(\bx)\le \lambda_2(\bx)\le \cdots \le \lambda_n(\bx)$ are the eigenvalues of
$\Hess_u(\bx )$ then $\lambda_1(\bx) + \cdots + \lambda_p(\bx) > 0$ for all $\bx\in D$.
\end{definition}

The analogue of Proposition \ref{prop:characterizations} holds for strongly $p$-plurisubharmonic
functions; in particular, we have the following result.

%
%
\begin{proposition}\label{prop:characterizations2}
A function $u\in \Cscr^2(D)$ on a domain $D\subset \r^n$ is strongly $p$-plurisubharmonic if and only if $u|_M$ 
is strongly subharmonic on every minimal $p$-dimensional submanifold $M\subset D$.
\end{proposition}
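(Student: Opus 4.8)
The plan is to deduce Proposition~\ref{prop:characterizations2} from the trace formula~(\ref{eq:trace-M}) exactly as the implication (b)$\Rightarrow$(d) in the proof of Proposition~\ref{prop:characterizations} was obtained, simply tracking the strict inequalities and adding the (easy) converse. Recall that~(\ref{eq:trace-M}) reads $\triangle_M(u|_M)=\tr_M\Hess_u-H_M u$ for any smooth submanifold $M\subset\r^n$, and that $M$ being minimal means $H_M\equiv 0$, so on a minimal submanifold one has the pointwise identity $\triangle_M(u|_M)(\bx)=\tr_{T_\bx M}\Hess_u(\bx)$ for every $\bx\in M$. Here ``strongly subharmonic'' is understood in the pointwise sense $\triangle_M(u|_M)>0$ on $M$, matching the notion of subharmonicity on a submanifold recalled before Definition~\ref{def:p-psh}.

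For the ``only if'' direction, I would assume $u$ is strongly $p$-plurisubharmonic and let $M\subset D$ be any $p$-dimensional minimal submanifold. Fixing $\bx\in M$, the tangent space $T_\bx M$ is a $p$-dimensional linear subspace of $\r^n$, so Definition~\ref{def:sp-psh} gives $\tr_{T_\bx M}\Hess_u(\bx)>0$. Combining this with the identity above yields $\triangle_M(u|_M)(\bx)>0$, and since $\bx\in M$ was arbitrary, $u|_M$ is strongly subharmonic.

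For the ``if'' direction, I would use that every $p$-dimensional affine subspace $L\subset\r^n$ is a minimal (indeed totally geodesic) $p$-dimensional submanifold, so $L\cap D$, when nonempty, is an open minimal $p$-dimensional submanifold of $D$. Applying the hypothesis to $M=L\cap D$ and using once more that $H_L=0$, one gets $\tr_L\Hess_u(\bx)=\triangle_{L}(u|_{L})(\bx)>0$ for every $\bx\in L\cap D$; since $L$ was arbitrary, $u$ is strongly $p$-plurisubharmonic by Definition~\ref{def:sp-psh}. As in Proposition~\ref{prop:characterizations}, I do not expect any real obstacle beyond invoking~(\ref{eq:trace-M}); the only point deserving care is to keep everything purely pointwise, so that no uniformity in $\bx$ or in the plane $L$ is required and the equivalence follows at once.
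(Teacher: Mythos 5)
Your proof is correct and follows essentially the same route the paper intends: the paper derives Proposition \ref{prop:characterizations2} as the strict-inequality analogue of Proposition \ref{prop:characterizations}, i.e.\ via the identity $\triangle_M(u|_M)=\tr_M\Hess_u$ on minimal submanifolds coming from (\ref{eq:trace-M}) with $H_M=0$, together with the observation that affine $p$-planes are themselves minimal $p$-dimensional submanifolds, which is exactly your argument.
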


Observe that, for any $u\in \Psh_p(D)\cap \Cscr^2(D)$, the function $u(\bx)+\epsilon \|\bx\|^2$ 
is strongly $p$-plurisubharmonic for every $\epsilon>0$. It follows that every 
$p$-plurisubharmonic function  can be approximated by smooth strongly $p$-plurisubharmonic functions.

If $h$ is a smooth real function on $\r$ and $u$ is a $\Cscr^2$ function on a domain $D\subset \r^n$, 
then for each point $\bx\in D$ and vector $\xi=(\xi_1,\ldots,\xi_n)\in\r^n$ we have
\begin{equation}\label{eq:Hess-comp}
	\Hess_{h\circ u}(\bx,\xi) = h'(u(\bx))\, \Hess_u(\bx,\xi) + h''(u(\bx))\, \|\nabla u(\bx)\cdotp \xi\|^2.
\end{equation}
It follows that, if $u$ is (strongly) $p$-plurisubharmonic and $h$ is (strongly) increasing and convex on
the range of $u$, then $h\circ u$ is also (strongly) $p$-plurisubharmonic.

%
%

\subsection{$p$-convex hulls and $p$-convex domains}
\label{ss:p-convex}

\begin{definition}[Definitions 3.1 and 3.3 in \cite{HarveyLawson2013IUMJ}] \label{def:p-hull}
Let $K$ be a compact set in a domain $D\subset \r^n$ and $p\in \{1,2,\ldots,n\}$.
The {\em $p$-convex hull} (or the {\em $p$-hull}) of $K$ in $D$ is the set
\[
	\wh K_{p,D}=\{\bx\in D : u(\bx)\le \sup_K u\ \ \text{for all}\ u\in \Psh_p(D)\}.
\]
We shall write $\wh K_{p}=\wh K_{p,\r^n}$.
The $2$-hull is also called the {\em minimal hull} and denoted
\[
	\wh K_{\Mgot,D} = \wh K_{2,D}; \qquad \wh K_{\Mgot}=\wh K_{\Mgot,\r^n}.
\]
A domain $D\subset \r^n$ is {\em $p$-convex} if  $\wh K_{p,D}$ is compact for every compact set $K\subset D$.
A $2$-convex domain is also called {\em minimally convex}.
\end{definition}

Since $\Psh_p(D)\subset \Psh_{p+1}(D)$ for $p=1,\ldots, n-1$, 
we have $K\subset \wh K_n \subset \cdots \subset 	\wh K_{2}\subset\wh K_{1} = \Co(K)$.
Simple examples show that these inclusions are strict in general. 

The following result is \cite[Theorem 3.4]{HarveyLawson2013IUMJ};
the proof is similar to the classical one concerning holomorphically convex domains in $\c^n$.

\begin{proposition}\label{prop:p-convex}
A domain $D\subset \r^n$ is {\em $p$-convex} for some $p\in \{1,2,\ldots,n\}$ 
(see Definition \ref{def:p-hull}) if and only if it admits a smooth strongly 
$p$-plurisubharmonic exhaustion function. 
\end{proposition}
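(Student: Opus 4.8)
The plan is to mirror the classical Cartan--Thullen / Grauert argument from several complex variables, replacing holomorphic convexity by $p$-convexity and plurisubharmonicity by $p$-plurisubharmonicity, using throughout that $\Psh_p(D)$ is closed under addition, multiplication by nonnegative scalars, and (by \eqref{eq:Hess-comp}) composition with increasing convex functions, and that $p$-plurisubharmonic functions can be approximated by smooth strongly $p$-plurisubharmonic ones. The easy direction is immediate: if $\rho\colon D\to\r$ is a smooth strongly $p$-plurisubharmonic exhaustion function, then for any compact $K\subset D$ the sublevel set $\{\bx\in D:\rho(\bx)\le \sup_K\rho\}$ is compact (exhaustion) and contains $\wh K_{p,D}$, since $\rho$ itself is one of the competing functions in the definition of the $p$-hull; as $\wh K_{p,D}$ is also closed in $D$, it is compact. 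So I would dispatch that in one short paragraph.

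For the converse, suppose every compact $K\subset D$ has compact $p$-hull $\wh K_{p,D}$. First I would fix a normal exhaustion $K_1\subset K_2\subset\cdots$ of $D$ by compact sets with $K_j\subset \mathring K_{j+1}$ and $\bigcup_j K_j=D$; replacing $K_j$ by $\wh{K_j}_{p,D}$ (still compact, and still an exhaustion after passing to a subsequence) we may assume each $K_j$ is $p$-convex, i.e.\ $\wh{K_j}_{p,D}=K_j$. The key local step is: for each $j$ and each point $\bx_0\in D\setminus K_j$, there is a function $u\in\Psh_p(D)$ with $u<0$ on $K_j$ and $u(\bx_0)>0$; since $\bx_0\notin\wh{K_j}_{p,D}$ such $u$ exists by definition (translate and scale), and by the smoothing remark after Definition~\ref{def:sp-psh} we may take $u$ smooth and strongly $p$-plurisubharmonic, at the cost of shrinking where it is negative — which is harmless since $K_j$ is compact. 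Using compactness of $K_{j+1}\setminus \mathring K_j$, a finite sum of such functions (composed with a suitable increasing convex function to boost the values on the annular shell) produces, for each $j$, a smooth strongly $p$-plurisubharmonic $v_j\ge 0$ on $D$ with $v_j<1$ on $K_j$ and $v_j>j$ on $K_{j+1}\setminus \mathring K_j$.

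Finally I would assemble $\rho=\sum_{j\ge 1} \epsilon_j v_j$, choosing $\epsilon_j>0$ decreasing fast enough (a standard argument: on the fixed compact set $K_m$ only finitely many terms can be large, so choose $\epsilon_j$ so small that $\sum_{j>m}\epsilon_j v_j$ together with its first and second derivatives is dominated by $2^{-j}$ on $K_m$ for each $m$) so that the series converges in $\Cscr^2_{\mathrm{loc}}(D)$. The locally uniform $\Cscr^2$ convergence guarantees $\rho\in\Cscr^2(D)$ and, since each $v_j$ is $p$-plurisubharmonic with $v_1$ strongly so, that $\rho$ is strongly $p$-plurisubharmonic (the sum of the $p$ smallest eigenvalues is superadditive, and $v_1$ contributes a strictly positive amount). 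That $\rho$ is an exhaustion follows because on $K_{m+1}\setminus \mathring K_m$ we have $\rho\ge \epsilon_m v_m>\epsilon_m m\to\infty$; more carefully, one replaces $v_m$ by $v_m-C$ for the shell estimate or simply notes $\rho(\bx)\ge \epsilon_m v_m(\bx)>m\epsilon_m$ for $\bx\notin K_m$, so the sublevel sets are relatively compact. The main obstacle is the bookkeeping in this last step: one must arrange simultaneously the $\Cscr^2$-convergence of the tail (so that smoothness and strong $p$-plurisubharmonicity survive the infinite sum) and the divergence of $\rho$ off the $K_j$'s (the exhaustion property), and these pull the $\epsilon_j$ in opposite directions on the transitional shells; the standard resolution is to estimate the tail only on the \emph{fixed} compacta $K_m$ while using the single dominant term $\epsilon_m v_m$ for the blow-up on $K_{m+1}\setminus \mathring K_m$, which decouples the two requirements.
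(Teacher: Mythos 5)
Your plan is the right one: the paper itself offers no argument for this proposition beyond citing Harvey--Lawson and remarking that the proof is the classical one for holomorphically convex domains, and your easy direction (the hull is trapped in a compact sublevel set of the exhaustion) is fine. In the converse, however, there are two concrete problems. The first is an indexing clash: you ask for $v_j<1$ on $K_j$ and $v_j>j$ on $K_{j+1}\setminus\mathring K_j$, but these two sets meet along $K_j\setminus\mathring K_j\neq\emptyset$, so no such $v_j$ exists; the hull definition only lets you separate the shell from the \emph{previous} compact, e.g.\ $v_j$ small on $K_{j-1}$ and large on $K_{j+1}\setminus \mathring K_j$. This is easily repaired, but the repair matters for the second, more serious point.

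The weighted series $\rho=\sum_j\epsilon_j v_j$ does not yield an exhaustion as you have set it up. For $\Cscr^2_{\mathrm{loc}}$ convergence you must take $\epsilon_j\le 2^{-j}\bigl(1+\|v_j\|_{\Cscr^2(K_{j-1})}\bigr)^{-1}$, and nothing in your construction controls the Hessians of $v_j$ on the earlier compacta (only the \emph{values} are small there); on the $j$-th shell all you then get is $\rho\ge \epsilon_j\, j$, and $\epsilon_j\, j\to 0$ in general rather than $+\infty$. Your proposed ``decoupling'' does not resolve this, because the same coefficient $\epsilon_j$ is constrained by the tail estimate on $K_{j-1}$ and is needed for the blow-up on the shell. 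The standard repair is the device the paper itself uses in the proof of Proposition \ref{prop:p-extension}(a): compose each separating function with a smooth convex increasing $h$ with $h\equiv 0$ on $(-\infty,0]$ (legitimate by \eqref{eq:Hess-comp}), so that the $j$-th summand is $\ge 0$, vanishes identically on a neighborhood of $K_{j-1}$, and exceeds $j$ on the shell. Then on a neighborhood of each $K_m$ all terms with $j>m$ vanish, so the series is locally finite: no weights $\epsilon_j$ are needed, smoothness and $p$-plurisubharmonicity of the sum are immediate, the lower bounds on the shells give properness, and adding $\|\bx\|^2$ restores strong $p$-plurisubharmonicity. Finally, be aware that your appeal to the smoothing remark to make the separating functions smooth and strongly $p$-plurisubharmonic \emph{on all of $D$} is not covered by mollification, which produces smooth $p$-plurisubharmonic functions only on relatively compact subsets; one should either work with the upper semicontinuous separating functions (maxima, then regularize near the relevant compacta), or invoke Harvey--Lawson's fact that the hull may be computed with smooth $p$-plurisubharmonic functions --- the global approximation statement you use is normally derived \emph{from} the existence of an exhaustion, which is what is being proved here.
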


The proof  of the next result follows the familiar case of plurisubharmonic functions;
see e.g.\ H\"ormander \cite[Theorem 5.1.5, p.\ 117]{Hormanderbook}.

\begin{proposition}\label{prop:p-extension}
Let $D$ be a $p$-convex domain in $\r^n$, and let $K\subset D$ be a compact $p$-convex set,
i.e., $K=\wh K_{p,D}$. Then the following conditions hold.
\begin{itemize}
\item[\rm (a)] There exists a smooth $p$-plurisubharmonic exhaustion function  $\rho\colon D\to\r_+$ 
such that $\rho^{-1}(0)=K$ and $\rho$ is strongly $p$-plurisubharmonic on $D\setminus K$.
\item[\rm (b)]
For every $p$-plurisubharmonic function $u$ on a neighborhood  $U$ of $K$ there exists
a $p$-plurisubharmonic exhaustion function $v \colon D\to\r$ which agrees with $u$ on 
$K$ and is smooth strongly $p$-plurisubharmonic on $D\setminus K$.
\end{itemize}
\end{proposition}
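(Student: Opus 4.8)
The plan is to carry out the $p$-plurisubharmonic transcription of H\"ormander's construction of plurisubharmonic exhaustion functions adapted to a holomorphically convex compact set (\cite[Theorem~5.1.5]{Hormanderbook}); the only ingredients are soft properties of the cone $\Psh_p(D)$ that are valid here: it is convex, closed under finite maxima and under the regularized maximum operation (which, moreover, is strongly $p$-plurisubharmonic wherever one of its arguments is), and closed under post-composition with smooth increasing convex functions by \eqref{eq:Hess-comp}; smooth strongly $p$-plurisubharmonic functions are dense in $\Psh_p(D)$ (mollify $u+\epsilon\|\cdot\|^2$); continuous strongly $p$-plurisubharmonic functions admit Richberg-type smoothings (local mollifications with shrinking radius, patched by regularized maxima, producing a $p$-plurisubharmonic function which dominates the original, is arbitrarily close to it, and is strongly $p$-plurisubharmonic where the original is); and, by Proposition~\ref{prop:p-convex}, $D$ carries a smooth strongly $p$-plurisubharmonic exhaustion $\phi$. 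Since $D$ is $p$-convex, the sets $L_\nu:=\wh{\overline{\{\phi<\nu\}}}_{p,D}$ are compact, exhaust $D$, and (after passing to a subsequence) satisfy $L_\nu\subset\mathrm{int}\,L_{\nu+1}$ with $K\subset\mathrm{int}\,L_0$; by upper semicontinuity of the hull and $K=\wh K_{p,D}$ we may also fix a relatively compact neighbourhood $W$ of $K$ whose $p$-hull lies in $U$.

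For part (a), the one place the hypothesis $K=\wh K_{p,D}$ is genuinely used is that for each $\bx\in D\setminus K$ there is a smooth strongly $p$-plurisubharmonic $g$ on $D$ with $g\le -1$ on $K$ and $g(\bx)$ arbitrarily large (mollify and rescale a function realizing $\bx\notin\wh K_{p,D}$, adding a small multiple of $\|\cdot\|^2$); if moreover $\bx\notin L_\nu$ one may take $g<0$ on all of $L_\nu$. Using these I would first build, layer by layer, a smooth $p$-plurisubharmonic exhaustion $\Phi$ of $D$ with $\Phi\ge 0$, $\Phi\equiv 0$ on $L_0$, and $\Phi$ strongly $p$-plurisubharmonic on $D\setminus L_0$: cover each compact shell $L_{\nu+1}\setminus\mathrm{int}\,L_\nu$ by finitely many sets on which a scaled $g$ (with $g<0$ on $L_{\nu-1}$) exceeds a large constant, replace these by their regularized maximum $h_\nu$, and set $\Phi=\sum_\nu\chi_\nu\circ h_\nu$ with convex increasing $\chi_\nu\ge 0$ vanishing on $(-\infty,0]$ and growing fast enough that the sum is locally finite and an exhaustion. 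Then I would squeeze $L_0$ down to $K$: choosing $W_k\downarrow K$, covering each compact $L_0\setminus W_k$ by finitely many $\{g>0\}$, taking regularized maxima $h'_k$ ($<0$ on $K$, $>0$ on $L_0\setminus W_k$), and forming $\Theta=\sum_k\chi'_k\circ h'_k$ with the terms so small in $\Cscr^\infty_{\mathrm{loc}}$ that the series converges; then $\Theta$ is smooth $p$-plurisubharmonic, $\ge 0$, $\Theta^{-1}(0)\cap L_0=K$, and strongly $p$-plurisubharmonic on $L_0\setminus K$. Finally $\rho:=\Phi+\Theta$ is a smooth $p$-plurisubharmonic exhaustion with $\rho\ge 0$ and $\rho^{-1}(0)=K$, and checking the two cases $\bx\in L_0\setminus K$ and $\bx\in D\setminus L_0$ shows it is strongly $p$-plurisubharmonic on $D\setminus K$.

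For part (b), given $u\in\Psh_p(U)$ (finite-valued, as the statement requires), I would glue $u+\epsilon\rho$ to a scaled copy of $\rho$. Because $\rho>0$ on $D\setminus K$, for $\epsilon>0$ small, $R>0$ large, and $M$ in a suitable window the regularized maximum of $u+\epsilon\rho$ (on $U$) and $R\rho-M$ (on $D$) agrees with $R\rho-M$ away from a neighbourhood of $K$ and with $u+\epsilon\rho$ near $K$, hence with $u$ on $K$ (where $\rho=0$); extending it by $R\rho-M$ yields $v_0\in\Psh_p(D)$ that is an exhaustion, continuous, strongly $p$-plurisubharmonic on $D\setminus K$, and equal to $u$ on $K$. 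A Richberg-type smoothing of $v_0$ on $D\setminus K$ with mollification radius tending to $0$ along $K$ then produces $v$, which is smooth and strongly $p$-plurisubharmonic on $D\setminus K$, remains $p$-plurisubharmonic on all of $D$ (since the smoothing only increases the function, the sub-mean-value inequality at points of $K$ is inherited from $v_0$ near $K$), is still an exhaustion, and still equals $u$ on $K$.

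The step I expect to be the main obstacle is the second half of (a) — forcing the zero set to be \emph{exactly} $K$ rather than the a priori larger compact $p$-convex set $L_0$, while keeping strong $p$-plurisubharmonicity on \emph{all} of $D\setminus K$ — together with the parallel gluing in (b). This is where $K=\wh K_{p,D}$ is used in an essential (non-soft) way and where the constants (in $\chi_\nu$, $\chi'_k$, the scalings of $g$ and $\rho$, and the window for $M$) must be chosen with care; everything else is the routine $\r^n$ analogue of standard pluripotential theory on pseudoconvex domains, as developed in \cite{HarveyLawson2011ALM,HarveyLawson2013IUMJ}.
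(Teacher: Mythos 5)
Your outline is essentially correct, but it is a genuinely heavier route than the one the paper takes, and in part (b) a genuinely different one. For (a), the paper needs none of the H\"ormander-style two-stage machinery: it never introduces the hulls $L_\nu$ of sublevel sets, the shells, or regularized maxima. It simply takes, for each $\bx\in D\setminus K$, a smooth strongly $p$-plurisubharmonic function $u$ on $D$ with $u<0$ on $K$ and $u(\bx)>0$, composes with a convex $h\ge 0$ vanishing on $(-\infty,0]$, sums a countable family $v=\sum_j\epsilon_j\, u_j$ of such compositions (converging in $\Cscr^\infty(D)$) over a cover of $D\setminus K$, and finally adds $h\circ\tau$, where $\tau$ is a smooth strongly $p$-plurisubharmonic exhaustion made negative on $K$; this one series already has zero set exactly $K$ and is strongly $p$-plurisubharmonic off $K$, so your separate layers $\Phi$ and $\Theta$ collapse into a single step. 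For (b), the paper's proof is the cutoff trick $v=\chi u+C\rho$ with $\chi\equiv 1$ near $K$, $\supp\chi\subset U$ and $C\gg 0$, the large Hessian of $C\rho$ absorbing the bounded negative contribution on $\supp\, d\chi\subset U\setminus K$; this is two lines but implicitly treats $u$ as (at least) $\Cscr^2$. Your regularized-maximum gluing of $u+\epsilon\rho$ with $R\rho-M$, followed by a Richberg-type smoothing away from $K$, is a legitimate alternative that covers continuous $u$, at the price of importing a smoothing theorem for strongly $p$-plurisubharmonic functions which the paper never needs (and which you should at least justify in this real setting). Two smaller caveats: the blanket claim that a regularized maximum is strongly $p$-plurisubharmonic wherever one of its arguments is, is not correct — it can fail where the merely $p$-plurisubharmonic argument dominates — though this is harmless here because in all your gluings every argument is strongly $p$-plurisubharmonic on the relevant set; and your parenthetical ``mollify and rescale'' only produces the separating functions $g$ on relatively compact subsets, so making them smooth, strongly $p$-plurisubharmonic and defined on all of $D$ requires the standard glue-with-the-exhaustion step (the paper glosses over exactly the same point in the first sentence of its proof of (a)).
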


\begin{proof}[Proof of (a):]
For any point $\bx\in D\setminus K$ there exists a smooth strongly $p$-plurisubharmonic function $u$ on $D$ 
such that $u<0$ on $K$ and $u(\bx)>0$. Pick a smooth function $h\colon \r\to\r_+$ which equals zero on $(-\infty,0]$ and 
is strongly increasing and strongly convex on $(0,\infty)$. 
Then $h\circ u\ge 0$ vanishes on a neighborhood of $K$ and is strongly
$p$-plurisubharmonic on a neighborhood $V$ of $\bx$ in view of (\ref{eq:Hess-comp}). 
Hence  we can pick a countable collection $\{(V_j,u_j)\}_{j\in \n}$,  
where $V_j$ is an open set in $D\setminus K$, $u_j\ge 0$ is a smooth $p$-plurisubharmonic function
on $D$ that vanishes near $K$ and is strongly $p$-plurisubharmonic on $V_j$, and 
$\bigcup_{j=1}^\infty V_j= D\setminus K$. If the numbers $\epsilon_j>0$ are chosen small enough, then
the series $v=\sum_{j=1}^\infty \epsilon_j u_j \ge 0$ converges in $\Cscr^\infty(D)$. By the construction, $v$
vanishes precisely on $K$ and is strongly $p$-plurisubharmonic on $D\setminus K$. 
Finally, take $\rho = v+h\circ \tau$, where $\tau$ is a smooth $p$-plurisubharmonic 
exhaustion function on $D$ that is negative on $K$.

\noindent {\em Proof of (b):}
We may assume that $\overline U$ is compact. Choose a smooth function $\chi$ on $\r^n$ 
such that $\chi=1$ on a neighborhood of $K$ and $\supp \,\chi\subset U$. 
Let $\rho$ be as in part (a). The function $v=\chi u +C\rho$ then satisfies condition (b) 
if the constant $C>0$ is chosen big enough. Indeed, the (very) positive Hessian
of $C\rho$ compensates the bounded negative part of the Hessian of $\chi u$ on the compact 
support of $d\chi$ which is contained  in $U\setminus K$.
\end{proof}

%
%

\subsection{Domains with smooth $p$-convex boundaries}
\label{ss:strongly-p-convex}

%
%
%
%
\begin{proof}[Proof of Theorem \ref{th:p-convex}]
As pointed out in the Introduction, these results were proved by Harvey and Lawson 
\cite{HarveyLawson2013IUMJ} for bounded domains; here we extend their arguments to unbounded domains. 

Thus, let $D\subset \r^n$ be a domain with boundary $bD$ of class $\Cscr^2$. Assume first that condition (a) holds,
i.e., $D$ is $p$-convex. It is immediate that such $D$ is also locally $p$-convex, in the sense that every point 
$\bx \in bD$ has a neighborhood $U\subset \r^n$ such that $D\cap U$ is $p$-convex 
(cf.\ \cite[(3.1) and Theorem 3.7]{HarveyLawson2013IUMJ}; the cited results also give the converse implication 
for bounded domains).  Furthermore, local $p$-convexity admits the following differential theoretic 
characterization  (cf.\ \cite[Remark 3.11]{HarveyLawson2013IUMJ}):

{\em A smoothly bounded domain $D\subset \r^n$ is locally $p$-convex at $\bx\in bD$ if and only if 
there are a neighborhood $U\subset \r^n$ of $\bx$ and a local smooth defining function $\rho$ for $D$ 
(i.e., $D\cap U=\{\rho<0\}$ and $d\rho\ne 0$ on $bD\cap U=\{\rho=0\}$) such that
\[
	\tr_L \Hess_\rho(\by) \ge 0\quad \text{for every tangent $p$-plane}\ L\subset T_\by bD,\
	\by\in bD\cap U.
\] 
}
This property is independent of the choice of $\rho$
and is equivalent to property (c) in Theorem \ref{th:p-convex} (that the sum of $p$ smallest principal curvatures of $bD$ is
nonnegative). Furthermore, setting $\delta=\dist(\cdotp,bD)$, $D$ is locally $p$-convex if and only if the function $-\log\delta$ is
$p$-plurisubharmonic on a collar around $bD$ in $D$ (cf.\ \cite[Summary 3.16]{HarveyLawson2013IUMJ}). 

This justifies the implications (a)$\Rightarrow$(b)$\Leftrightarrow$(c)$\Rightarrow$(d)  in Theorem \ref{th:p-convex}.

It remains to prove that (d)$\Rightarrow$(a). Assume that (d) holds, i.e., the $\Cscr^2$ function 
$-\log\delta$ is $p$-plurisubharmonic on an interior collar $U\subset D$ around $bD$.  
Choose a smooth cut-off function $\chi\colon \r^n\to [0,1]$
which equals $0$ on an open set  $V\subset D$ containing $D\setminus U$ and equals $1$ on an open set
$W\subset \r^n$ containing $\r^n\setminus D$. Its differential $d\chi$ has support in the set $U\setminus W$
whose closure is contained in $D$. The product $-\chi \log \delta$ is then a function of class $\Cscr^2(D)$
which is $p$-plurisubharmonic near $bD$ and tends to $+\infty$ along $bD$.
Let $h\colon \r_+\to\r_+$ be a smooth, increasing, strongly convex function. If $h$ is chosen such that its
derivative $h'(t)>0$ grows sufficiently fast as $t\to+\infty$, then we see from (\ref{eq:Hess-comp}) 
that the function
\[
	\rho(\bx) = -\chi(\bx) \log \delta(\bx) + h(\|\bx\|^2), \quad \bx\in D
\]
is a strongly $p$-plurisubharmonic exhaustion function on $D$, so condition (a) holds.
\end{proof}

\begin{corollary}
A domain  $D\subset \r^n$ (not necessarily bounded), 
whose boundary $bD$ is a smooth embedded minimal hypersurface,
is $(n-1)$-convex (also called mean-convex, see Remark \ref{rem:mean-convex}). 
In particular, a domain in $\r^3$ bounded  by a closed embedded minimal surface is minimally convex.
\end{corollary}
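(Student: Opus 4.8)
The claim follows at once from Theorem \ref{th:p-convex} applied with $p=n-1$, so the plan is simply to verify that its hypotheses are met. First I would observe that a smooth embedded minimal hypersurface is in particular of class $\Cscr^2$ (indeed real-analytic, by elliptic regularity), so that Theorem \ref{th:p-convex} applies to the domain $D$ with the exponent $p=n-1<n$; this dispenses with the regularity requirement, and nothing here needs $D$ to be bounded.

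Next I would translate minimality of $bD$ into condition (c) of Theorem \ref{th:p-convex}. By definition, $bD$ being minimal means that its mean curvature vanishes identically, equivalently that at each point $\bx\in bD$ the sum of all principal curvatures is zero; this is independent of the choice of unit normal, since reversing the normal changes the sign of every $\kappa_i(\bx)$. Taking $p=n-1$, the sum of the $p$ smallest principal curvatures from the inner side therefore satisfies
\[
	\kappa_1(\bx)+\kappa_2(\bx)+\cdots+\kappa_{n-1}(\bx)=0\ge 0
	\qquad \text{for every } \bx\in bD,
\]
which is exactly condition (c) of Theorem \ref{th:p-convex} for $p=n-1$. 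Invoking the equivalence (c)$\Leftrightarrow$(a) of that theorem --- valid for not necessarily bounded domains, as established in Subsection \ref{ss:strongly-p-convex} --- I conclude that $D$ admits a smooth strongly $(n-1)$-plurisubharmonic exhaustion function, i.e.\ $D$ is $(n-1)$-convex (equivalently mean-convex, in the terminology of Remark \ref{rem:mean-convex}). For the final assertion, specializing to $n=3$ gives $n-1=2$, and a $2$-convex domain is by Definition \ref{def:p-hull} precisely a minimally convex domain.

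I do not expect any real obstacle beyond bookkeeping of conventions. The one substantive point is that minimality of $bD$ forces the relevant sum of principal curvatures to be identically zero --- hence nonnegative, and uniformly so --- so that hypothesis (c) of Theorem \ref{th:p-convex} holds trivially, while the $\Cscr^2$-smoothness hypothesis is automatic from the word "smooth". The only ingredient that is not purely formal, namely the extension of Theorem \ref{th:p-convex} to unbounded domains, has already been proved just above.
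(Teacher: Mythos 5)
Your proposal is correct and follows essentially the same route as the paper: since $bD$ is minimal, the principal curvatures satisfy $\kappa_1+\cdots+\kappa_{n-1}=0\ge 0$, so condition (c) of Theorem \ref{th:p-convex} holds with $p=n-1$, and the implication (c)$\Rightarrow$(a), proved in Subsection \ref{ss:strongly-p-convex} for unbounded domains as well, gives $(n-1)$-convexity, with the case $n=3$ yielding minimal convexity. Your extra remarks on regularity and the normal-independence of the vanishing mean curvature are fine but not needed beyond what the paper implicitly uses.
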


%
%

\subsection{The Maximum Principle and the Kontinuit\"atssatz} 
\label{ss:maximum}
Since the restriction of a $p$-plurisubharmonic function $u$ on a domain $D\subset \r^n$ to a minimal $p$-dimensional
submanifold $M\subset D$ is subharmonic on $M$ (cf.\ Propositions \ref{prop:characterizations}
and \ref{prop:characterizations2}), it follows from the maximum principle for subharmonic functions that, 
for any compact minimal $p$-dimensional submanifold $M\subset D$ with boundary $bM$, we have the  implication
\[
 	bM\subset K \Longrightarrow M\subset \wh K_{p,D}.
\] 
The same conclusion holds for immersed minimal submanifolds and for minimal $p$-dimensional currents.
Furthermore, we have the following result which is analogous to the classical 
{\em Kontinuit\"atssatz} (also called the {\em continuity principle}) in complex analysis.
(Compare with Harvey and Lawson \cite{HarveyLawson2013IUMJ}, proof of Theorem 3.9 on p.\ 159.)

%
%

\begin{proposition}[Kontinuit\"atssatz for minimal submanifolds] 
\label{prop:Kontinuitatssatz}
Assume that $D$ is a $p$-convex domain in $\r^n$ for some $p\in \{1,\ldots,n\}$ and 
$\{M_t\}_{t\in [0,1)}$ is a continuous family of immersed compact minimal $p$-dimensional submanifolds 
of $\r^n$ with boundaries $bM_t$. If $M_0\subset D$ and 
$\bigcup_{t\in [0,1)} bM_t$ is contained in a compact subset of $D$, then $\bigcup_{t\in [0,1)} M_t$ is also
contained in a compact subset of $D$. 
\end{proposition}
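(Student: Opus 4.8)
The plan is to combine the maximum-principle bound $bM_t \subset K \Rightarrow M_t \subset \wh K_{p,D}$ (stated just above the proposition) with the fact that $p$-convexity of $D$ guarantees that the hull of a compact set in $D$ is again a compact subset of $D$. First I would let $K \subset D$ be a compact set containing $\bigcup_{t \in [0,1)} bM_t$, which exists by hypothesis. Since $D$ is $p$-convex (Definition \ref{def:p-hull}), the hull $L := \wh K_{p,D}$ is a compact subset of $D$. For each $t \in [0,1)$ the submanifold $M_t$ is an immersed compact minimal $p$-dimensional submanifold with $bM_t \subset K$, so by the maximum principle stated before the proposition we get $M_t \subset \wh K_{p,D} = L$. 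Taking the union over $t$ yields $\bigcup_{t \in [0,1)} M_t \subset L$, and $L$ is a compact subset of $D$, which is exactly the conclusion.

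There is one point requiring a moment's care: the maximum-principle implication $bM_t \subset K \Rightarrow M_t \subset \wh K_{p,D}$ is applied for a single submanifold and does not use continuity of the family at all. In fact the hypothesis that $M_0 \subset D$ and that $\{M_t\}$ is a continuous family is not strictly needed for the argument as I have set it up — the uniform bound on the boundaries already forces every $M_t$ into the fixed compact hull $L$. (I would remark on this, or alternatively note that the continuity and the condition $M_0 \subset D$ are the natural hypotheses under which one typically invokes a Kontinuitätssatz, and they make the statement parallel to its complex-analytic counterpart; one could also phrase it so that, a priori, $M_t$ need only be assumed to lie in $\r^n$ rather than in $D$, which is where the real content lies — the family cannot "escape" $D$ through the interior while its boundary stays in a compact part of $D$.)

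The only genuine input beyond bookkeeping is the validity of the maximum principle for \emph{immersed} minimal $p$-dimensional submanifolds (and currents): this follows because the restriction of any $u \in \Psh_p(D)$ to such an $M_t$ is subharmonic by Proposition \ref{prop:characterizations}(d) (and its analogue for currents), and a subharmonic function on a compact manifold with boundary attains its maximum on the boundary, giving $\sup_{M_t} u = \sup_{bM_t} u \le \sup_K u$; since this holds for every $u \in \Psh_p(D)$, the definition of $\wh K_{p,D}$ yields $M_t \subset \wh K_{p,D}$.

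I do not expect a serious obstacle here; the main subtlety is simply to state the hypotheses cleanly and to be explicit that compactness of $\wh K_{p,D}$ in $D$ is precisely the defining property of a $p$-convex domain, so that no further exhaustion-function argument is needed.
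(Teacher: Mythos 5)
There is a genuine gap, and it comes from overlooking what the hypotheses actually say: the $M_t$ are minimal submanifolds of $\r^n$, \emph{not} assumed to lie in $D$ for $t>0$; only $M_0\subset D$ is assumed. The maximum-principle implication you invoke, $bM_t\subset K \Rightarrow M_t\subset \wh K_{p,D}$, is stated in the paper only for submanifolds $M\subset D$, and it cannot hold otherwise: the functions $u\in\Psh_p(D)$ defining the hull are defined only on $D$, so $u|_{M_t}$ is not even defined (let alone subharmonic) at points of $M_t$ outside $D$. Your claim that ``the uniform bound on the boundaries already forces every $M_t$ into the fixed compact hull $L$'' is false for an individual surface: take $D=\{x^2+y^2>\cosh^2 z\}\subset\r^3$ (Example \ref{ex:catenoid}) and let $M$ be the flat horizontal disc of radius $R>1$ centered on the $z$-axis at height $z=0$; its boundary circle lies in a compact subset of $D$, yet $M$ meets the complement of $D$. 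So the conclusion of the proposition is emphatically not a statement about each $M_t$ separately — the continuity of the family and the condition $M_0\subset D$ carry the real content, and your remark that they ``are not strictly needed'' is precisely where the argument breaks down.

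The missing idea is a connectedness argument in the parameter $t$, which is how the paper proceeds: with $K$ the closure of $M_0\cup\bigcup_t bM_t$ and $L=\wh K_{p,D}$ compact (here $p$-convexity is used, as you correctly note), one considers $J=\{t\in[0,1): M_t\subset L\}$. Then $0\in J$; $J$ is closed because the family is continuous and $L$ is compact; and $J$ is open because if $M_{t_0}\subset L\subset D$, then by continuity $M_t\subset D$ for $t$ near $t_0$, and \emph{only then} does the maximum principle (legitimately applied, since now $M_t\subset D$) give $M_t\subset L$. Hence $J=[0,1)$ and $\bigcup_t M_t\subset L$. Your final paragraph justifying the maximum principle for immersed submanifolds via Proposition \ref{prop:characterizations}(d) is fine as far as it goes, but it only applies after one knows $M_t\subset D$, which is exactly what the open–closed argument supplies.
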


\begin{proof}
Let $K$ denote the closure  of the set $M_0\cup  \bigcup_{t\in [0,1)} bM_t$ in $D$.
By the hypothesis, $K$ is compact. Since $D$ is $p$-convex, the $p$-hull $L=\wh K_{p,D}\subset D$ of $K$ 
is also compact.   Consider the set $J=\{t\in [0,1): M_t\subset L\}$. We have $0\in J$ by the hypothesis.
We claim that $J=[0,1)$. Since the family $M_t$ is continuous in $t$ and $L$ is compact, 
$J$ is closed. It remains to see that $J$ is also open.  Assume that $t_0\in J$; then $M_{t_0}\subset L\subset D$.
By continuity, it follows that $M_t\subset D$ for all $t\in [0,1)$ sufficiently close to $t_0$, 
and the maximum principle implies that $M_t\subset L$ for all such $t$.
\end{proof}

\begin{problem}
Assume that $1<p<n$ and $D$ is a domain in $\r^n$  which satisfies the conclusion
of Proposition \ref{prop:Kontinuitatssatz} for minimal $p$-dimensional 
submanifolds. Does it follow that $D$ is $p$-convex? 
Is the function $-\log\dist(\cdotp,bD)$  $p$-plurisubharmonic  on $D$?
\end{problem}

If $bD$ is smooth, then the validity of the Kontinuit\"atssatz for $D$ implies (by Harvey and Lawson,
cf.\ Theorem  \ref{th:p-convex} above) that $-\log\dist(\cdotp,bD)$ is $p$-plurisubharmonic
near $bD$; even in this case, it is not clear whether it is $p$-plurisubharmonic on all of $D$.
The analogous result in complex analysis is Oka's theorem, saying 
that the function $-\log\dist(\cdotp,bD)$ is plurisubharmonic on any Hartogs pseudoconvex 
domain $D\subset\c^n$ (see e.g.\ \cite[Theorem 5.6, p.\ 96]{Rangebook}).
Its proof breaks down in the present situation since the sum of two minimal discs in $\r^n$ is not 
a minimal disc in general.

The following result will be used in the proof  of Theorem \ref{th:FTC} in Section \ref{sec:maximal}.

%
%
%
%

\begin{proposition}[The Maximum Principle for minimal submanifolds] 
\label{prop:max-principle}
Let $D$ be a proper $p$-convex domain in $\r^n$ and let $M\subset D$ be a compact, connected, immersed 
minimal $p$-dimensional submanifold with boundary $bM$. Then the following hold:
\begin{itemize}
\item[\rm (a)]  $\dist(bM,bD) = \dist(M,bD)$.   
\item[\rm (b)]  If $D$ has smooth boundary and $\dist(\bx_0,bD) = \dist(bM,bD)$
for some point $\bx_0 \in \mathring M=M\setminus bM$, then $bD$ contains a translate of $M$.
\item[\rm (c)] If the assumption in part (b) holds for $p=2$, $n=3$ 
(i.e., $M$ is a compact minimal surface with boundary in a minimally convex domain $D\subset \r^3$
and $\dist(\bx_0,bD) = \dist(bM,bD)$  for some $\bx_0 \in \mathring M$), then $M$ is a piece of a plane. 
Moreover, if $\by_0\in bD$ is such that $\|\bx_0-\by_0\| = \dist(bM,bD)$, then 
$\bigcup_{t\in [0,1)} t(\by_0-\bx_0) +M\subset D$ and $(\by_0-\bx_0)+M\subset   bD$.
\end{itemize}
\end{proposition}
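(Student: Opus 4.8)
The plan is to deduce all three parts from one translation argument combined with two maximum principles; once (a) is known I write $\delta$ for the common value $\dist(M,bD)=\dist(bM,bD)$.

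For (a) I would argue by contradiction, assuming $\dist(M,bD)<\dist(bM,bD)=:r$. Then the minimum $\delta:=\dist(M,bD)<r$ of $\dist(\cdotp,bD)$ over the compact set $M$ is attained at some point, and since $\dist(\cdotp,bD)\ge r$ on $bM$ it is attained only in $\mathring M$; fix such a point $\bx_0\in\mathring M$ and a foot point $\by_0\in bD$ with $\|\bx_0-\by_0\|=\delta$. Consider the continuous family of translates $M_s:=M+s(\by_0-\bx_0)$, $s\in[0,1]$; each $M_s$ is again a compact immersed minimal $p$-dimensional submanifold, and $M_0=M\subset D$. For $\bp\in bM$ and $s\in[0,1]$ one has $\dist(\bp+s(\by_0-\bx_0),bD)\ge r-s\delta\ge r-\delta>0$, so $\bigcup_{s\in[0,1]}bM_s$ is a compact subset of $D$. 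The Kontinuit\"atssatz (Proposition \ref{prop:Kontinuitatssatz}) then forces $\bigcup_{s\in[0,1)}M_s$, hence its closure, into a compact subset of $D$; in particular $M_1\subset D$, contradicting $\by_0=\bx_0+(\by_0-\bx_0)\in M_1\cap bD$. This proves (a), and gives $\dist(M,bD)=\dist(bM,bD)=\dist(\bx_0,bD)=:\delta$ under the hypotheses of (b) and (c).

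For (b), with $bD\in\Cscr^2$, fix $\by_0\in bD$ with $\|\bx_0-\by_0\|=\delta$ and set $M_t:=M+t(\by_0-\bx_0)$. A triangle-inequality estimate as above gives $\dist(M_t,bD)\ge(1-t)\delta$, while $\bx_0(t):=\bx_0+t(\by_0-\bx_0)\in M_t$ realizes distance $(1-t)\delta$ to $\by_0$; hence $\dist(M_t,bD)=(1-t)\delta$, attained at $\bx_0(t)$. By Theorem \ref{th:p-convex}(d) there is a collar $W=\{0<\dist(\cdotp,bD)<\eta\}$ on which $-\log\dist(\cdotp,bD)$ is $p$-plurisubharmonic, hence (Proposition \ref{prop:characterizations}(d)) subharmonic on every minimal $p$-dimensional submanifold lying in $W$; moreover $\dist(\cdotp,bD)\in\Cscr^2(W)$ since $bD\in\Cscr^2$. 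For $t$ close to $1$ we have $\bx_0(t)\in W$, and $-\log\dist(\cdotp,bD)|_{\mathring M_t}$ attains its maximum at the interior point $\bx_0(t)$. Applying the strong maximum principle on the connected component $U$ of $\mathring M_t\cap W$ containing $\bx_0(t)$ yields $\dist(\cdotp,bD)\equiv(1-t)\delta$ on $U$; since the level set $\{\dist(\cdotp,bD)=(1-t)\delta\}$ lies in $W$, the set $U$ is also closed in $\mathring M_t$, so $U=\mathring M_t$ and $\dist(\cdotp,bD)\equiv(1-t)\delta$ on all of $M_t$. Letting $t\to1^-$ gives $M_1:=M+(\by_0-\bx_0)\subset bD$, a translate of $M$ contained in $bD$.

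For (c) the inclusions $\bigcup_{t\in[0,1)}\bigl(t(\by_0-\bx_0)+M\bigr)\subset D$ and $(\by_0-\bx_0)+M\subset bD$ are exactly what was produced in the proof of (b) (the first because $\dist(M_t,bD)=(1-t)\delta>0$ for $t<1$). For planarity, write $M_t=M_1+(1-t)(\bx_0-\by_0)$ and recall $\dist(\cdotp,bD)\equiv(1-t)\delta$ on $M_t$ for $t$ near $1$; then for each $\bp\in M_1\subset bD$ the point $\bp+(1-t)(\bx_0-\by_0)\in M_t$ has distance exactly $(1-t)\delta$ both to $bD$ and to $\bp\in bD$, so once $(1-t)\delta$ is below the reach of $bD$ the nearest boundary point is $\bp$ and the segment joining them meets $bD$ orthogonally. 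Hence the unit inner normal of $bD$ equals the constant vector $(\bx_0-\by_0)/\delta$ at every point of $M_1$; since $M_1\subset bD$ is $2$-dimensional it is an open piece of $bD$, so $bD$ has constant Gauss map along $M_1$ and is therefore flat there, and $M_1$ — hence $M$ — lies in a plane. The main obstacle is the maximum-principle step in (b): $-\log\dist(\cdotp,bD)$ is known to be $p$-plurisubharmonic only near $bD$ (whether it is so on all of $D$ is the open problem recorded above), so one cannot apply a maximum principle to $\dist(\cdotp,bD)|_M$ directly — the translation is what pulls the distance-minimizing point into the collar $W$ while keeping the surface minimal, and the level-set/connectedness argument is what upgrades the local conclusion of the strong maximum principle to all of $M_t$; the remaining ingredients ($\Cscr^2$ regularity of the distance function near $bD$ and the flatness of a hypersurface with constant normal) are elementary.
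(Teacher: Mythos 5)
Your argument is correct, and it follows the paper's overall strategy (translation family plus the Kontinuit\"atssatz for (a); translation plus a maximum principle for $p$-plurisubharmonic functions restricted to minimal submanifolds, plus a connectedness argument, for (b) and (c)), but the implementation of the maximum-principle step is genuinely different. The paper translates a small neighborhood $V$ of $\bx_0$ by the full vector $\by_0-\bx_0$ so that it touches $bD$ at $\by_0$, applies the strong maximum principle to a $p$-plurisubharmonic function $\rho\le 0$ defined on a neighborhood of $bD$ with $\{\rho<0\}=D\cap U$ and $\rho=0$ on $bD$, concludes that the translated piece lies in $bD$, and then runs an open--closed argument on $M$ itself; for (c) it uses the touching-ball observation to see that the Gauss map of $M$ is constant. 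You instead keep the surface strictly inside $D$, translating all of $M$ by $t(\by_0-\bx_0)$ with $t<1$, and use only part (d) of Theorem \ref{th:p-convex} (the collar where $-\log\dist(\cdot,bD)$ is $p$-plurisubharmonic), propagating the constancy of the distance from the component of $\mathring M_t\cap W$ containing $\bx_0(t)$ to all of $\mathring M_t$ via the level-set closedness argument, and then letting $t\to1^-$; for (c) you read off the constant inner normal of $bD$ along $M+(\by_0-\bx_0)$ from the nearest-point condition rather than the constant Gauss map of $M$. What your route buys is that it needs only the interior $-\log\dist$ collar, i.e.\ the cleanest consequence of Theorem \ref{th:p-convex}, rather than a $p$-plurisubharmonic function vanishing precisely on $bD$ on a two-sided neighborhood; the price is the extra one-parameter family and the limit $t\to 1^-$. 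Two small points to tidy in a write-up: your $W$ should be the one-sided collar $\{\bx\in D:\dist(\bx,bD)<\eta\}$, and if $D$ is unbounded you should intersect it with a fixed ball containing $\bigcup_{t\in[0,1]}M_t$, since the neighborhood furnished by Theorem \ref{th:p-convex}(d) (and the region where $\dist$ is $\Cscr^2$) need not have uniform width along a noncompact $bD$ --- only a compact portion of $bD$ is relevant here; and in (c) the appeal to the reach is unnecessary, because the fact that $\bp$ realizes $\dist(q,bD)$ already forces $q-\bp\perp T_\bp bD$, which is all you use.
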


\begin{proof}[Proof of (a)]
Assume that  $\dist(\bx_0,bD) < \dist(bM,bD)$ for some $\bx_0 \in \mathring M$. Pick a point $\by_0 \in bD$ 
such that $\dist(\bx_0,bD)=\|\bx_0-\by_0\|$ and a number $t_0$ with $\|\bx_0-\by_0\|  < t_0 < \dist(bM,bD)$.
The family of translates $M_t=M+t(\by_0-\bx_0)/\|\by_0-\bx_0\|$ for $t\in [0,t_0]$ then violates 
Proposition \ref{prop:Kontinuitatssatz}. This contradiction proves part (a).

\noindent {\em Proof of (b).} 
By Theorem \ref{th:p-convex}, there are a neighborhood $U\subset \r^n$ of $bD$ 
and a $p$-plurisubharmonic function $\rho$ on $U$ such that $U\cap  D=\{\bx\in U\colon \rho(\bx)<0\}$.
Let $\bx_0\in\mathring M$ be such that $c=\dist(\bx_0,bD)= \dist (M,bD)$. 
Pick a point $\by_0\in bD$ with $\|\bx_0-\by_0\|=c$.  
There is a compact connected neighborhood $V \subset M$ of 
$\bx_0$ in $M$ such that the translate $W= V + \by_0-\bx_0$  
is contained in $U$, and hence in $U\cap \overline D$ by part (a).  
Clearly $\by_0\in W$. Since the function $\rho|_W\le 0$ 
is subharmonic and $\rho(\by_0)=0$, it is constantly equal to zero by the maximum principle, 
and hence $W\subset bD$. This means that, for every $\bx\in V$, we have 
\begin{equation}\label{eq:dist}
	\bx+\by_0-\bx_0\in bD\quad  \text{and}\quad  \dist(\bx,bD)=\dist(M,bD).
\end{equation}
This argument shows that the set of points $\bx\in M$ satisfying (\ref{eq:dist}) 
is open, and clearly it is also closed, so it equals $M$.  Thus $M+\by_0-\bx_0 \subset bD$ .

\noindent {\em Proof of (c).}  Let $\bx_0$ and $\by_0$ be as in the statement of (c). Then $M$ does not intersect
the open ball centered at $\by_0$ of radius $\|\bx_0-\by_0\| = \dist(bM,bD)$.
This implies that $\by_0 =\bx_0+c {\rm N}(\bx_0)$,
where ${\rm N}\colon V\to \s^2$ is a Gauss map of the orientable surface $V\subset M$ introduced in part (b).
Since (\ref{eq:dist})  holds for all $\bx\in V$, we see that  ${\rm N}(\bx)={\rm N}(\bx_0)$ for all $\bx\in V$.
This shows that  $V$, and hence also $M$, is a piece of a plane and (c) follows.
\end{proof}

%
%
%
%
\subsection{Null plurisubharmonic functions}
\label{ss:MPsh-NPsh}
Let $\bz=(z_1,\ldots,z_n)=\bx+\imath \by$ be complex coordinates on $\c^n$, with $z_j=x_j+\imath y_j$ for 
$j=1,\ldots,n$. We shall write $\zero=(0,\ldots,0)$ for the origin in $\r^n$ or in $\c^n$.
Given a $\Cscr^2$ function $\rho \colon  \Omega \to\r$ on a domain $\Omega \subset \c^n$,
we denote by $\Lcal_\rho(\bz;\cdotp)$ its {\em Levi form} at a point $\bz\in \Omega$:
\begin{equation}\label{eq:Levi}
	\Lcal_\rho(\bz;\bw) = \sum_{j,k=1}^n \frac{\partial^2 \rho}{\partial z_j \partial\bar z_k}(\bz) w_j\overline w_k,
	\quad \bw=(w_1,\ldots,w_n)\in\c^n.
\end{equation}

We shall use the following lemma whose proof amounts to a simple calculation.

\begin{lemma}\label{lem:relation}
Let  $B=(b_{j,k})$ be a real symmetric $n\times n$ matrix, and let $\bw=\bu+\imath \bv\in\c^n$. Then 
\[
	2\sum_{j,k=1}^n b_{j,k} u_j u_k = 
	\Re\left( \sum_{j,k=1}^n b_{j,k} w_j w_k  \right) + \sum_{j,k=1}^n b_{j,k} w_j\overline w_k.
\]
\end{lemma}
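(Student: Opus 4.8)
The statement to prove is Lemma~\ref{lem:relation}, a purely algebraic identity relating a real quadratic form evaluated on the real part $\bu=\Re\bw$ of a complex vector $\bw=\bu+\imath\bv$ to the (complex-bilinear) form $\sum b_{j,k}w_jw_k$ and the Hermitian form $\sum b_{j,k}w_j\overline w_k$, where $B=(b_{j,k})$ is real symmetric. The natural approach is simply to expand both complex sums on the right-hand side in terms of $\bu$ and $\bv$, using $w_j=u_j+\imath v_j$, $\overline w_k=u_k-\imath v_k$, and the symmetry $b_{j,k}=b_{k,j}$, and check that the imaginary parts cancel and the real parts add up to $2\sum b_{j,k}u_ju_k$.

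Concretely, first I would compute $\sum_{j,k} b_{j,k}w_jw_k = \sum_{j,k} b_{j,k}(u_j+\imath v_j)(u_k+\imath v_k) = \sum b_{j,k}u_ju_k - \sum b_{j,k}v_jv_k + \imath\bigl(\sum b_{j,k}u_jv_k + \sum b_{j,k}v_ju_k\bigr)$; using $b_{j,k}=b_{k,j}$ the two mixed sums in the imaginary part coincide, so taking $\Re$ of this gives exactly $\sum b_{j,k}u_ju_k - \sum b_{j,k}v_jv_k$. Next I would compute the Hermitian form $\sum_{j,k} b_{j,k}w_j\overline w_k = \sum b_{j,k}(u_j+\imath v_j)(u_k-\imath v_k) = \sum b_{j,k}u_ju_k + \sum b_{j,k}v_jv_k + \imath\bigl(\sum b_{j,k}v_ju_k - \sum b_{j,k}u_jv_k\bigr)$, and again by symmetry of $B$ the imaginary part vanishes, leaving $\sum b_{j,k}u_ju_k + \sum b_{j,k}v_jv_k$ (which is automatically real, as it must be for a Hermitian form of a real symmetric matrix). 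Adding the two results, the $\sum b_{j,k}v_jv_k$ terms cancel and we are left with $2\sum b_{j,k}u_ju_k$, which is the claimed identity.

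There is essentially no obstacle here — the lemma is elementary and its proof is a one-line verification; the only thing to be careful about is correctly invoking the symmetry $b_{j,k}=b_{k,j}$ at the two places where mixed $uv$-terms appear, since without it the cross terms would not cancel or combine as needed. I would present the proof as the short computation above, perhaps phrased as: expand $w_jw_k$ and $w_j\overline w_k$, use $b_{j,k}=b_{k,j}$ to handle the cross terms, and add. No appeal to any earlier result in the paper is required.

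\begin{proof}
Write $w_j = u_j + \imath v_j$. Using $b_{j,k}=b_{k,j}$ we compute
\[
	\Re\Bigl(\sum_{j,k=1}^n b_{j,k} w_j w_k\Bigr)
	= \sum_{j,k=1}^n b_{j,k} u_j u_k - \sum_{j,k=1}^n b_{j,k} v_j v_k,
\]
since the imaginary part $2\sum_{j,k} b_{j,k} u_j v_k$ is discarded, and
\[
	\sum_{j,k=1}^n b_{j,k} w_j \overline w_k
	= \sum_{j,k=1}^n b_{j,k} u_j u_k + \sum_{j,k=1}^n b_{j,k} v_j v_k,
\]
the imaginary terms $\imath\bigl(\sum_{j,k} b_{j,k} v_j u_k - \sum_{j,k} b_{j,k} u_j v_k\bigr)$ cancelling by the symmetry of $B$. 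Adding these two identities, the terms involving $\bv$ cancel and we obtain $2\sum_{j,k} b_{j,k} u_j u_k$, as claimed.
\end{proof}
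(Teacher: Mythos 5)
Your proof is correct and is precisely the ``simple calculation'' the paper alludes to without spelling it out: a direct expansion of $w_jw_k$ and $w_j\overline w_k$ in terms of $\bu,\bv$, using the symmetry $b_{j,k}=b_{k,j}$ to handle the cross terms, followed by adding the two identities. Nothing further is needed.
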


A function $\rho\colon D\to \r$ on a domain $D\subset\r^n$ will also be considered as
a function on the tube $\Tcal_D= D\times\imath \r^n \subset\c^n$ 
which is independent of the imaginary variable:
\begin{equation}\label{eq:independent}
	\rho(\bx+\imath \by)=\rho(\bx)\quad \text{for all}\ \bx\in D\ \text{and}\ \by\in\r^n.
\end{equation}
Fix a point $\bx\in D$ and a vector $\bu\in \r^n$. The Hessian $\Hess_\rho(\bx;\cdotp)$  
 (\ref{eq:Hess}) has coefficients
\[
	b_{j,k} := \frac{\partial^2 \rho}{\partial x_j \partial x_k}(\bx) =
	4 \frac{\partial^2 \rho}{\partial z_j \partial\bar z_k}(\bx) \in\r.
\] 
Lemma \ref{lem:relation} shows that, for every  $\bw=\bu+\imath\bv \in \c^n$, we have
\begin{equation}\label{eq:Hess-Levi}
	 \frac{1}{2}\, \Hess_\rho(\bx;\bu) = \frac{1}{4}\,  \Re\left( \sum_{j,k=1}^n b_{j,k} w_j w_k  \right)
	+ \Lcal_\rho(\bx;\bw).
\end{equation}
Replacing $\bw$ by $-\imath \bw=\bv-\imath \bu$ and noting that 
$\Lcal_\rho(\bx;\pm \imath \bw)=\Lcal_\rho(\bx;\bw)$ while the first term on the right hand side
of  (\ref{eq:Hess-Levi}) changes sign, we obtain 
\[
	 \Hess_\rho(\bx;\bu) +  \Hess_\rho(\bx;\bv) = 4 \Lcal_\rho(\bx;\bu+\imath \bv).
\]
In particular, if $(\bu,\bv)$ is an orthonormal pair of vectors in $\r^n$ and we set
\[ 
	L= \bx + \span_\r\{\bu,\bv\}\subset \r^n,\quad 
	\Lambda=\bx +  \span_\c\{\bu+\imath \bv\}\subset \c^n,
\] 
then it follows that
\begin{equation}\label{eq:triangle}
	\triangle (\rho|_L)(\bx) = 4 \Lcal_\rho(\bx;\bu+\imath \bv) = \triangle (\rho|_\Lambda)(\bx).
\end{equation}

Set $a_j=\frac{\di \rho}{\di x_j}(\bx)\in\r$ for $j=1,\ldots,n$. The identity (\ref{eq:Hess-Levi}) implies
that, for every point $\bz=\bx+\imath\by\in \Tcal_D$  and vector $\bw=\bu+\imath\bv\in \c^n$ near $\zero\in\c^n$,
we have the Taylor expansion
\begin{eqnarray*}  \label{eqn:Taylor}
	\rho(\bz+\bw) &=& \rho(\bx) + \sum_{j=1}^n a_j u_j + \frac{1}{2}\, \Hess_\rho(\bx;\bu) + o(\|\bu\|^2) \\
	&=&  \rho(\bx) +\Re\left( \sum_{j=1}^n a_j w_j  +   \frac{1}{4} \sum_{j,k=1}^n b_{j,k} w_j w_k   \right) 
	+ \Lcal_\rho(\bx;\bw) + o(\|\bw\|^2).   
\end{eqnarray*}
Denote by $\Sigma_{\bx}\subset \c^n$ the local complex hypersurface near the origin $\zero\in \c^n$ given by 
\begin{equation}\label{eq:Sigma-z}
	\Sigma_{\bx}= \bigl\{\bw : \sum_{j=1}^n a_j w_j  +   \frac{1}{4} \sum_{j,k=1}^n b_{j,k} w_j w_k=0 \bigr\}.
\end{equation}
It follows that 
\begin{equation}\label{eq:restriction}
	\rho(\bz+\bw) = \rho(\bz) +  \Lcal_\rho(\bx;\bw) + o(\|\bw\|^2),\qquad  
		\bz=\bx+\imath\by \in \Tcal_D,\ \bw\in \Sigma_\bx.
\end{equation}

We need to recall the connection between minimal plurisubharmonic functions on a domain $D\subset\r^n$ 
and null plurisubharmonic functions on the tube $\Tcal_D=D\times \imath \r^n\subset \c^n$;
the latter class of functions was introduced in \cite{DrinovecForstneric2015TAMS}. 
%
%
%
%

Let $\Agot\subset \c^n$ denote the {\em null quadric}:
\begin{equation}\label{eq:Agot}
	\Agot =\{\bz=(z_1,\ldots,z_n) \in\c^n : z_1^2+z_2^2+\cdots + z_n^2=0\},
	\quad \Agot_*=\Agot\setminus\{\zero\}.
\end{equation}

%
%
\begin{definition}[Definitions 2.1 and 2.4 in \cite{DrinovecForstneric2015TAMS}]  \label{def:npsh}
Let $\Omega$ be a domain in $\c^n$ for some $n\ge 3$. 
\begin{itemize}
\item[\rm (a)]  An upper semicontinuous function $u\colon \Omega \to\r\cup\{-\infty\}$ is
{\em null plurisubharmonic} ($u\in \NPsh(\Omega)$) if, for any affine complex line 
$L\subset \c^n$ directed by a null vector $\theta\in \Agot_*$, the restriction of $u$ to $L\cap \Omega$ is subharmonic. 
(If $u\in\Cscr^2(\Omega)$, this is equivalent to the condition that
$\Lcal_u(\bz;\bw)\ge 0$ for every $\bz\in \Omega$ and $\bw \in\Agot_*$.)

\vspace{1mm}
\item[\rm (b)]
A function $u\in \Cscr^2(\Omega)$ is {\em null strongly plurisubharmonic} 
if $\Lcal_u(\bz;\bw)>0$ for every $\bz\in \Omega$ and $\bw \in\Agot_*$. 
\end{itemize}
\end{definition}

Note that a vector $0\ne \bw=\bu+\imath\bv \in \c^n$ belongs to the null quadric $\Agot$
(\ref{eq:Agot}) if and only if the vectors $\bu,\bv\in \r^n$ are orthogonal and have equal length:
\begin{equation}\label{eq:conformal-null} 
	\bu+\imath\bv \in \Agot_* \ \Longleftrightarrow\     \bu\cdotp \bv =0\ \text{and}\ \|\bu\|=\|\bv\|.
\end{equation}

Assume that $(\bu,\bv)$ is an orthonormal pair in $\r^n$.
In view of  (\ref{eq:triangle}), we have the following result for functions  $u \in\Cscr^2(D)$; the general case 
for upper semicontinuous functions is seen similarly. 

%
%
%
%
\begin{lemma}[Lemma 4.3 in \cite{DrinovecForstneric2015TAMS}]  \label{lem:minimal}
Let $D$ be a domain in $\r^n$ and $\Tcal_D =D\times \imath \r^n\subset\c^n$.
\begin{itemize}
\item If $u$ is (strongly) minimal plurisubharmonic on $D$, then the function 
$U(\bx+\imath \by)=u(\bx)$ is (strongly) null plurisubharmonic on $\Tcal_D$. 
\vspace{1mm}
\item Conversely, assume that a function $U\colon \Tcal_D \to\r$ is independent of  the variable $\by=\Im \bz$,
and let $u(\bx)=U(\bx+\imath \zero)$ for $\bx\in D$. 
If $U$ is (strongly) null plurisubharmonic on $\Tcal_D$, then $u$ is (strongly) minimal plurisubharmonic on $D$. 
\end{itemize}
\end{lemma}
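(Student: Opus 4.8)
The plan is to reduce everything to the pointwise computation already recorded in \eqref{eq:triangle}, which relates the restricted Laplacian of $\rho$ on a real $2$-plane $L$ spanned by an orthonormal pair $(\bu,\bv)$ to the Levi form $\Lcal_\rho(\bx;\bu+\imath\bv)$ on the complex line directed by the null vector $\bu+\imath\bv$. The key dictionary is \eqref{eq:conformal-null}: a nonzero vector $\bw=\bu+\imath\bv\in\c^n$ lies in $\Agot_*$ precisely when $\bu\perp\bv$ and $\|\bu\|=\|\bv\|$; after scaling such a $\bw$ so that $\|\bu\|=\|\bv\|=1$, the pair $(\bu,\bv)$ becomes orthonormal and spans a $2$-plane $L$, while $\bw$ directs a complex line $\Lambda$. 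Since the Levi form is $2$-homogeneous in $\bw$, positivity of $\Lcal_\rho(\bx;\bw)$ for all $\bw\in\Agot_*$ is equivalent to its positivity (or nonnegativity) for all such normalized null vectors.

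First I would treat the $\Cscr^2$ case. For the first bullet, suppose $u$ is minimal plurisubharmonic on $D$; by Proposition \ref{prop:characterizations}(b) with $p=2$, $\tr_L\Hess_u(\bx)\ge 0$ for every $2$-plane $L$, i.e.\ $\triangle(u|_L)(\bx)\ge 0$. Define $U(\bx+\imath\by)=u(\bx)$ on $\Tcal_D$. Given any $\bw\in\Agot_*$, normalize and apply \eqref{eq:triangle}, which gives $4\,\Lcal_U(\bx+\imath\by;\bw)=\triangle(u|_L)(\bx)\ge 0$; here I use that the Levi form of $U$ at $\bx+\imath\by$ depends only on $\bx$ because $U$ is independent of $\by$ (this is exactly \eqref{eq:independent}), and that $\Lcal_U$ restricted to the subspace directed by $\bu+\imath\bv$ computes the relevant $\triangle(\rho|_\Lambda)$. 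Hence $U\in\NPsh(\Tcal_D)$. The strong case is identical with $\ge$ replaced by $>$, invoking Definition \ref{def:sp-psh} and Proposition \ref{prop:characterizations2}. For the converse bullet, run the same equivalence backwards: if $U$ is independent of $\by$ and null (strongly) plurisubharmonic, then for every orthonormal pair $(\bu,\bv)$ the vector $\bu+\imath\bv\in\Agot_*$, so \eqref{eq:triangle} yields $\triangle(u|_L)(\bx)=4\,\Lcal_U(\bx;\bu+\imath\bv)\ge 0$ (resp.\ $>0$); since every $2$-plane through $\bx$ arises this way, Proposition \ref{prop:characterizations}(b) (resp.\ Proposition \ref{prop:characterizations2}) shows $u$ is (strongly) minimal plurisubharmonic on $D$.

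Finally I would dispose of the general upper semicontinuous case, as the paper signals it should be done "similarly." The cleanest route is approximation: any minimal plurisubharmonic $u$ on $D$ is a decreasing limit of smooth minimal plurisubharmonic functions $u_k$ on subdomains exhausting $D$ (this standard fact is recalled right after Definition \ref{def:p-psh}), the $\Cscr^2$ case applies to each $u_k$ so $U_k(\bx+\imath\by)=u_k(\bx)\in\NPsh$, and $\NPsh$ is closed under decreasing limits since subharmonicity is (a decreasing limit of subharmonic functions on each null complex line is subharmonic). For the converse, one uses instead the definitional characterization: to test that $u(\bx)=U(\bx+\imath\zero)$ is subharmonic on $L\cap D$ for a real $2$-plane $L$ spanned by an orthonormal pair $(\bu,\bv)$, parametrize $L$ and the complex line $\Lambda$ through the common variable $\zeta=\xi_1+\imath\xi_2\in\c$ via $\bx(\zeta)=\bx_0+\xi_1\bu+\xi_2\bv$ and $\bz(\zeta)=\bz_0+\zeta(\bu+\imath\bv)$; because $U$ is independent of the imaginary part, $U(\bz(\zeta))=u(\bx(\zeta))$, and $\Lambda$ is directed by the null vector $\bu+\imath\bv\in\Agot_*$, so subharmonicity of $U|_\Lambda$ (which holds by hypothesis) transfers verbatim to subharmonicity of $u|_L$. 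I do not expect a serious obstacle here; the only point demanding care is bookkeeping the normalization of null vectors and the factor-of-$4$ conventions so that \eqref{eq:triangle} is applied correctly, together with checking that the approximation argument respects the tube structure (which it does, since $u_k$ being independent of $\by$ is automatic from the construction $U_k(\bx+\imath\by)=u_k(\bx)$).
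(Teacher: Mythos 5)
Your argument is essentially the paper's own: the $\Cscr^2$ case is exactly the observation recorded in \eqref{eq:triangle} combined with the dictionary \eqref{eq:conformal-null} and homogeneity of the Levi form, and the paper disposes of the upper semicontinuous case with the same parametrization/approximation remarks you supply. The only slip is minor bookkeeping: with $\bz(\zeta)=\bz_0+\zeta(\bu+\imath\bv)$ one has $\Re\bigl(\zeta(\bu+\imath\bv)\bigr)=\xi_1\bu-\xi_2\bv$, so $U(\bz(\zeta))=u(\bx_0+\xi_1\bu-\xi_2\bv)$ rather than $u(\bx(\zeta))$ as written; this is harmless since the reflection $\xi_2\mapsto-\xi_2$ preserves subharmonicity and the same plane $L$ is parametrized.
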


Recall that a {\em null holomorphic disc} in $\c^n$ $(n\ge 3)$   is a holomorphic map 
$F=(F_1,\ldots,F_n) \colon \d\to\c^n$ satisfying the nullity condition $F'(\zeta)\in \Agot$;
equivalently:
\begin{equation} \label{eq:Ndisc}
		F'_1(\zeta)^2 + F'_2(\zeta)^2  +\cdots +  F'_n(\zeta)^2=0,
		\quad \zeta \in \d.
\end{equation} 
More generally, a holomorphic immersion $F\colon M\to\c^n$ from an open Riemann surface $M$ is a 
{\em holomorphic null curve} if the derivative of $F$ in any local holomorphic coordinate on $M$
satisfies the condition (\ref{eq:Ndisc}). It follows from (\ref{eq:conformal-null}) and the
Cauchy-Riemann equations that the real and the imaginary part of a holomorphic null disc 
$F\colon \d\to\c^n$ are conformal minimal discs in $\r^n$; conversely, every conformal minimal
disc is the real part of a holomorphic null disc. We have the following observation.

\begin{proposition}[Proposition 2.7 in \cite{DrinovecForstneric2015TAMS}] \label{prop:testing-null}
An upper semicontinuous function $u$ on a domain $\Omega\subset \c^n$ $(n\ge 3)$ is null plurisubharmonic
if and only if the function $u\circ F$ is subharmonic on $\d$ for every null holomorphic disc $F\colon\d\to\Omega$.
\end{proposition}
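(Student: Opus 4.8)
The plan is to prove the two implications separately: the ``if'' direction is formal, while the ``only if'' direction is the substantial one and reduces to a $\Cscr^2$ computation via a local regularization.

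For the ``if'' direction I would argue as follows. Suppose $u\circ F$ is subharmonic on $\d$ for every null holomorphic disc $F\colon\d\to\Omega$. Let $L=\ba+\c\theta$ be an affine complex line with $\theta\in\Agot_*$. For any $\zeta_0$ and $r>0$ with $\ba+(\zeta_0+r\zeta)\theta\in\Omega$ for all $\zeta\in\d$, the map $\zeta\mapsto\ba+(\zeta_0+r\zeta)\theta$ is a null holomorphic disc into $\Omega$ (its derivative is the constant $r\theta\in\Agot$), so $u$ is subharmonic along it; since such subdiscs cover $L\cap\Omega$ and subharmonicity is a local property invariant under holomorphic reparametrisation, $u|_{L\cap\Omega}$ is subharmonic. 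Hence $u\in\NPsh(\Omega)$.

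For the ``only if'' direction, assume $u\in\NPsh(\Omega)$ and fix a null holomorphic disc $F\colon\d\to\Omega$. First I would treat the case $u\in\Cscr^2(\Omega)$: since $F$ is holomorphic, the Cauchy--Riemann equations yield the pointwise identity $\triangle(u\circ F)(\zeta)=4\,\Lcal_u\big(F(\zeta);F'(\zeta)\big)$, and by the nullity condition \eqref{eq:Ndisc} we have $F'(\zeta)\in\Agot$, so $\triangle(u\circ F)\ge0$ and $u\circ F$ is subharmonic. For general upper semicontinuous $u\in\NPsh(\Omega)$ I would pass to the $\Cscr^2$ case by local smoothing, using two facts. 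First, every null plurisubharmonic function is subharmonic on $\Omega$ in the ordinary sense: evaluating the (distributional) Levi form on the null vectors $\be_j\pm\imath\be_k\in\Agot_*$ ($j\ne k$) shows that the sum of the $j$-th and $k$-th diagonal entries of $\Lcal_u$ is nonnegative for every $j\ne k$, and summing over all pairs gives $\triangle u=4\,\tr\,\Lcal_u\ge0$; in particular $u\in L^1_{\mathrm{loc}}$. Second, on any ball $B\Subset\Omega$ the mollifications $u_\epsilon=u*\chi_\epsilon$ by a nonnegative radial mollifier $\chi_\epsilon$ are smooth, decrease to $u$ as $\epsilon\searrow0$, and are again null plurisubharmonic, since for each fixed $\bv\in\Agot$ the nonnegative measure $\Lcal_u(\,\cdot\,;\bv)$ stays nonnegative after convolution with $\chi_\epsilon\ge0$. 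Then, given $\zeta_0\in\d$, I choose $r>0$ with $F(\overline{D(\zeta_0,r)})\subset B$; the $\Cscr^2$ case shows each $u_\epsilon\circ F$ is subharmonic on $D(\zeta_0,r)$, hence so is the decreasing limit $u\circ F$. As $\zeta_0$ was arbitrary, $u\circ F$ is subharmonic on $\d$.

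The step I expect to need the most care is the regularization, namely the two facts above: that null plurisubharmonicity is preserved by convolution with a positive mollifier, and that the resulting family is \emph{decreasing} (the latter resting on null plurisubharmonic functions being ordinary subharmonic, so that the classical submean-value property applies to the radial mollifier). Both are routine in spirit but must be set up correctly for merely upper semicontinuous functions; alternatively, one may simply quote the regularization theorem for null plurisubharmonic functions established in \cite{DrinovecForstneric2015TAMS} (the analogue of the familiar fact that plurisubharmonic functions are decreasing limits of smooth ones), which reduces the whole direction to the $\Cscr^2$ identity.
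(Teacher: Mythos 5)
Your outline coincides with the standard argument behind this result, which the present paper does not prove at all but imports verbatim from \cite{DrinovecForstneric2015TAMS} (Proposition 2.7): the affine discs $\zeta\mapsto\ba+\zeta\theta$, $\theta\in\Agot_*$, give the ``if'' direction, and for $u\in\Cscr^2$ the identity $\triangle(u\circ F)(\zeta)=4\,\Lcal_u\bigl(F(\zeta);F'(\zeta)\bigr)$ (valid for every holomorphic $F$, the nullity of $F'$ being used only to conclude nonnegativity) gives the ``only if'' direction. These two parts of your proposal are correct.

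The gap is in your reduction of the general upper semicontinuous case to the smooth case. You assert that $u$ is subharmonic, hence in $L^1_{\mathrm{loc}}$, by ``evaluating the distributional Levi form on $\be_j\pm\imath\be_k$''; but the distributional Levi form of $u$ is only defined once $u\in L^1_{\mathrm{loc}}$ is known, so as written this is circular (the trace computation is valid for $\Cscr^2$ functions, which is not the case at issue). The point is not cosmetic: both the definition of $u_\epsilon=u*\chi_\epsilon$ and the monotone pointwise convergence $u_\epsilon\downarrow u$ rest on ordinary subharmonicity of $u$, so this is precisely what must be established first. A correct elementary repair is available: for $j\ne k$, in the coordinates $w_\pm=z_j\pm\imath z_k$ (the remaining variables fixed) the null directions $\be_j\pm\imath\be_k$ become the two coordinate directions, so $u$ is separately subharmonic there and, being upper semicontinuous, locally bounded above; a theorem of Avanissian then gives subharmonicity of $u$ on each such complex $2$-plane, and a further step (iterating this type of argument, or invoking the Harvey--Lawson viscosity framework, in which the null subequation is contained in $\{\triangle\ge 0\}$) yields subharmonicity on $\Omega$. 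Your fallback of simply quoting the regularization statement for null plurisubharmonic functions from \cite{DrinovecForstneric2015TAMS} is legitimate---indeed the present paper cites that source for the entire proposition---but without one of these inputs the ``only if'' direction of your self-contained argument is not yet complete.
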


%
%
%
%
\section{Proof of Theorems \ref{th:main1},  \ref{th:main1-bis}, and \ref{th:main2}}
\label{sec:Proof1}
 
We begin with technical preparations.
 
Let $\rho\colon D\to \r$ be a smooth minimal strongly plurisubharmonic exhaustion function on a domain 
$D\subset \r^3$. We extend $\rho$ to a function on the tube $\Tcal_D = D\times \imath\r^3\subset\c^3$
which is independent of the imaginary variable; see (\ref{eq:independent}). 
By Lemma \ref{lem:minimal}, the extended function $\rho$ is null strongly plurisubharmonic on $\Tcal_D$. 
For every point $\bx\in  D$ we denote by $\Sigma_\bx\subset \c^3$ the local complex 
hypersurface at $\zero\in\c^3$ given by (\ref{eq:Sigma-z}): 
\begin{equation}\label{eq:Sigma-z2}
	\Sigma_{\bx}= \bigl\{\bw=(w_1,w_2,w_3) : \sum_{j=1}^3 \frac{\di \rho}{\di x_j}(\bx)  w_j  
	+   \sum_{j,k=1}^3  \frac{\partial^2 \rho}{\partial z_j \partial\bar z_k}(\bx) w_j w_k=0 \bigr\}.
\end{equation}
Let $P$ denote the critical locus of $\rho$. We assume in the sequel that $\bx \in D\setminus P$;
then $\Sigma_\bx$ is nonsingular at $\zero\in \Sigma_\bx$ and its tangent space is
\begin{equation}\label{eq:tangent}
	T_\zero \,\Sigma_\bx = \bigl\{\bw \in\c^3 : \sum_{j=1}^3 \frac{\di \rho}{\di x_j}(\bx)   w_j  =0\bigr\}.
\end{equation}
Note that the coefficients $a_j=\frac{\di \rho}{\di x_j}(\bx)$ of the equation in (\ref{eq:tangent}) are real. 
By shrinking $\Sigma_\bx$ around $\zero$ if necessary, we may assume that the hypersurface $\Sigma_\bx$ is nonsingular.

The intersection of the null quadric $\Agot$ (\ref{eq:Agot})  with any complex $2$-plane 
$\zero\in \Lambda\subset \c^3$ consists of two complex lines which may coincide for certain $\Lambda$. 
 However, for a $2$-plane $\Lambda=\bigl\{\bw=(w_1,w_2,w_3)\in\c^3\colon \sum_{j=1}^3 a_j w_j =0 \bigr\}$
with real coefficients $a_1,a_2,a_3\in\r$ not all equal to 0,
the intersection $\Agot\cap \Lambda$ consists of two distinct 
complex lines as is seen by a simple calculation.
Identifying the tangent space $T_\bz\c^3$ with $\c^3$, we may consider the null quadric $\Agot$
as a subset of $T_\bz\c^3$ for any point $\bz\in\c^3$. By what has been said above,
for any point $\bz\in \Sigma_\bx$ sufficiently close to $\zero$ the intersection $\Agot\cap T_\bz \Sigma_\bx$ 
is a union of two distinct complex lines. This defines on $\Sigma_\bx$ a couple of holomorphic direction fields, 
and hence (by integration) a couple of one dimensional complex analytic foliations by 
holomorphic null curves. In particular, for any point $\bx\in D\setminus P$ we have 
two distinct embedded holomorphic null discs $\Ncal^1_\bx, \Ncal^2_\bx \subset \Sigma_\bx$ passing through $\zero$. 
Although there is no well defined global ordering of these two null discs when $\bx$ runs over $D\setminus P$, 
such an ordering clearly exists on every simply connected subset. 
By the definition of $\Sigma_\bx$ and  (\ref{eq:restriction}), we have that
\[ 
	\rho(\bz+\bw) = \rho(\bz) +  \Lcal_\rho(\bx;\bw) + o(\|\bw\|^2),\quad  
		\bw\in \Sigma_\bx.
\]  
This holds in particular for all $\bw\in \Ncal^1_\bx \cup \Ncal^2_\bx\subset \Sigma_\bx$. 
Since $\rho$ is null strongly plurisubharmonic on $\Tcal_D$, the Levi form $\Lcal_\rho(\bx;\cdotp)$
is positive on the null lines $T_\zero  \Ncal^j_\bx$ for $j=1,2$. It follows that  
for every point $\bz=\bx+\imath\by \in \Tcal_D$ with $\bx\in D\setminus P$ 
there exist constants $C_\bx>0$ and $\delta_\bx>0$ such that
\begin{equation}\label{eq:restriction2}
	\rho(\bz+\bw) \ge \rho(\bz) + C_\bx \|\bw\|^2,\quad  \bw\in \Ncal^1_\bx \cup \Ncal^2_\bx,\ \|\bw\|\le \delta_\bx.
\end{equation}
Moreover, the constants $C_\bx$ and $\delta_\bx$ can clearly be chosen uniform
for all points $\bx$ in any given compact subset of $D\setminus P$.
By projecting the discs $\Ncal^1_\bx,\ \Ncal^2_\bx$ to $\r^3$ we get a corresponding family of 
conformal minimal discs with the analogous properties.

We summarize the above discussion in the following lemma.

%
%
\begin{lemma}\label{lem:M-discs}
Let $D$ be a domain in $\r^3$, and let $\rho\colon D \to \r$ be a $\Cscr^2$ minimal strongly plurisubharmonic 
function with the critical locus $P$. For every compact set $L \subset D\setminus P$ there exist a constant 
$c=c_L>0$ and families of embedded null holomorphic discs
$\sigma_\bx^j = \alpha_\bx^j + \imath  \beta_\bx^j \colon \cd\to \c^3$  $(\bx\in L,\ j=1,2)$,
depending locally $\Cscr^1$ smoothly on the point $\bx\in L$ and satisfying the following conditions:
\begin{itemize}
\item[\rm (a)]   $\sigma_\bx^j(0)=0$;
\item[\rm (b)]   $\{\bx + \alpha_\bx^j(\zeta) : \zeta\in \cd\} \subset D$;
\item[\rm (c)]   the function $\cd\ni \zeta\mapsto \rho\bigl(\bx+ \alpha_\bx^j(\zeta)\bigr)$ is strongly convex 
and satisfies
\begin{equation}\label{eq:estimate-c}
	\rho\bigl(\bx + \alpha_\bx^j(\zeta)\bigr) \ge \rho(\bx)+c \|\zeta\|^2,\quad \zeta\in \cd.   
\end{equation}
\end{itemize}
\end{lemma}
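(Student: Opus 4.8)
The statement is essentially a local and uniform repackaging of the formulas derived just above Lemma~\ref{lem:M-discs}, so the plan is to make that construction explicit and to take care of the smooth dependence on $\bx$. First I would extend $\rho$ to the tube $\Tcal_D=D\times\imath\r^3$ independently of the imaginary variable; by Lemma~\ref{lem:minimal} this extension is null strongly plurisubharmonic. For each $\bx\in D\setminus P$ the quadratic equation \eqref{eq:Sigma-z2} defines a smooth complex hypersurface germ $\Sigma_\bx$ at $\zero\in\c^3$ whose tangent plane \eqref{eq:tangent} has \emph{real} coefficients $a_j=\partial\rho/\partial x_j(\bx)$, not all zero on $D\setminus P$. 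Because these coefficients are real, the intersection $\Agot\cap T_\zero\Sigma_\bx$ of the null quadric with this $2$-plane is a union of two \emph{distinct} complex lines — this is the elementary computation already invoked in the text, which I would record as a one-line lemma (for $\sum a_jw_j=0$ with $a\in\r^3\setminus\{0\}$, parametrize the plane and observe the restricted quadratic form $w_1^2+w_2^2+w_3^2$ is nondegenerate on it). Propagating these two null directions along $\Sigma_\bx$ gives two holomorphic line fields, and integrating them (or, more simply, intersecting $\Sigma_\bx$ with the two null $2$-planes tangent at $\zero$ and applying the holomorphic implicit function theorem) yields two embedded holomorphic null discs $\Ncal^1_\bx,\Ncal^2_\bx\subset\Sigma_\bx$ through $\zero$.

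Next I would parametrize these discs by the unit disc. Fix linear coordinates on the two null lines $\ell^j_\bx=T_\zero\Ncal^j_\bx$; writing $\Ncal^j_\bx$ as a graph over $\ell^j_\bx$ and precomposing with a small dilation $\zeta\mapsto r\zeta$ (where $r>0$ is chosen small, uniformly over the compact set $L$, so the graphs are defined and the images stay in $D\times\imath\r^3$), I obtain holomorphic null immersions $\sigma^j_\bx=\alpha^j_\bx+\imath\beta^j_\bx\colon\cd\to\c^3$ with $\sigma^j_\bx(0)=0$; shrinking $r$ further makes them embeddings and guarantees $\{\bx+\alpha^j_\bx(\zeta):\zeta\in\cd\}\subset D$, so (a) and (b) hold. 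For (c), the Taylor expansion \eqref{eq:restriction} restricted to $\Ncal^j_\bx$ gives
\[
	\rho(\bx+\alpha^j_\bx(\zeta)) = \rho(\bx) + \Lcal_\rho(\bx;(\sigma^j_\bx)'(0)\zeta) + o(|\zeta|^2),
\]
and since $\rho$ is null strongly plurisubharmonic, $\Lcal_\rho(\bx;\cdotp)$ is positive on the null line $\ell^j_\bx$; thus the second derivative of $\zeta\mapsto\rho(\bx+\alpha^j_\bx(\zeta))$ at $\zeta=0$ is a positive-definite real quadratic form. After an initial rescaling this second-order term dominates, so $\zeta\mapsto\rho(\bx+\alpha^j_\bx(\zeta))$ is strongly convex on $\cd$ and satisfies $\rho(\bx+\alpha^j_\bx(\zeta))\ge\rho(\bx)+c\|\zeta\|^2$ for a suitable $c>0$.

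\textbf{Uniformity and smooth dependence.} The point requiring care, and the one I would treat as the main obstacle, is the locally $\Cscr^1$ dependence on $\bx\in L$ together with the \emph{uniformity} of the constants $r$ and $c$ over the compact set $L$. Here I would argue as follows. The coefficients of the quadric \eqref{eq:Sigma-z2} depend $\Cscr^1$ on $\bx$ (indeed as smoothly as $\rho$ permits), hence so do the two null $2$-planes tangent at $\zero$; since on $D\setminus P$ the two null lines in $\Agot\cap T_\zero\Sigma_\bx$ are \emph{distinct}, they can be chosen to depend $\Cscr^1$-smoothly on $\bx$ locally (there is a monodromy/ordering ambiguity globally on $L$, acknowledged in the text, but this is harmless and only requires working on simply connected pieces — or one may simply state the conclusion locally, as Lemma~\ref{lem:M-discs} does). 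The implicit function theorem then produces the graphs $\Ncal^j_\bx$, and hence $\sigma^j_\bx$, with $\Cscr^1$ dependence on $\bx$, on a neighborhood of each point of $L$; compactness of $L$ lets one pick a single $r>0$ that works everywhere. Finally, $\bx\mapsto\Lcal_\rho(\bx;\cdotp)|_{\ell^j_\bx}$ is continuous and positive on $L$, so its infimum over $L$ is a positive number, and together with a uniform bound on the $o(|\zeta|^2)$ remainder — obtained from a uniform $\Cscr^2$ bound on $\rho$ over a compact neighborhood of $\{\bx+\alpha^j_\bx(\zeta)\}$ in $D$ — this yields a single constant $c=c_L>0$ valid for all $\bx\in L$ and $j=1,2$. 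This completes the construction.
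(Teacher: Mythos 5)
Your proposal is correct and follows essentially the same route as the paper: the authors also extend $\rho$ to the tube $\Tcal_D$, intersect the null quadric with the tangent plane of $\Sigma_\bx$ (two distinct lines since the coefficients $a_j$ are real), integrate the resulting direction fields to get the two null discs, and obtain the quadratic lower bound and strong convexity from the expansion \eqref{eq:restriction} together with positivity of the Levi form on the null lines, with uniform constants on compacts by continuity. Your added care about rescaling and the local $\Cscr^1$ dependence matches what the paper leaves implicit.
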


The conformal minimal discs $\alpha_\bx^j\colon \cd\to\r^3$, furnished by Lemma \ref{lem:M-discs},
will be used to push the boundary $F(bM)$ of a given conformal minimal immersion $F\colon M\to  D$ 
to a higher level set of $\rho$, except near the critical points of $\rho$ which shall be avoided
by a different method explained in the sequel. The relevant tool for this lifting  is the following. 
(Related results on the Riemann-Hibert problem for null curves are given by \cite[Theorem 4]{AlarconForstneric2015MA}
in dimension $n=3$, and by \cite[Theorem 3.5]{AlarconDrinovecForstnericLopez2015} in arbitrary dimension $n\ge 3$.)

%
%
%
%
\begin{theorem}[Riemann-Hilbert problem for conformal  minimal surfaces in $\r^3$]
\label{th:RH}
Let $M$ be a compact bordered Riemann surface with nonempty boundary $bM\ne\emptyset$, 
let $I_1,\ldots,I_k$ be pairwise disjoint compact subarcs of $bM$ which are not 
connected components of $bM$, and set $I=\bigcup_{j=1}^k I_j$.  
Choose a thin annular neighborhood  $A\subset M$ of $bM$ 
and a smooth retraction $\rho\colon A\to bM$.  Assume that
\begin{itemize}
\item $F\colon M \to\r^3$ is a conformal minimal immersion of class $\Cscr^1(M)$,
\item $r \colon bM \to [0,1]$ is a continuous function supported on  $I$, and
\item $\alpha \colon I \times\overline{\d}\to\r^3$ is a map of class $\Cscr^1$ such that
for every $\zeta\in I$ the map $\cd \ni \xi \mapsto \alpha(\zeta,\xi)\in\r^3$ is a conformal minimal 
immersion with $\alpha(\zeta,0)=0$.
\end{itemize}
Let  the map $\varkappa\colon bM \times \overline{\d}\to \r^3$ be given by
\begin{equation}\label{eq:varkappa}
	\varkappa(\zeta,\xi)=F(\zeta) + \alpha\bigl(\zeta,r(\zeta) \,\xi\bigr),
\end{equation}
where we take $\alpha\bigl(\zeta,r(\zeta) \,\xi\bigr)=0$ for $\zeta\in bM\setminus I$.
Given a number $\eta>0$ and an open neighborhood $\Omega\subset M$ of   
$I$, there exists a conformal minimal immersion $G\colon M\to\r^3$ of class $\Cscr^1(M)$ 
satisfying the following conditions:
\begin{enumerate}[\it i)]
\item $\dist(G(\zeta ),\varkappa(\zeta ,\t))<\eta$ for all $\zeta \in bM$;
\item $\dist(G(\zeta ),\varkappa(\rho(\zeta ),\overline{\d}))<\eta$ for all $\zeta \in \Omega$;
\item $\|G-F\|_{1,M\setminus \Omega}<\eta$;
\item $\Flux(G)=\Flux(F)$.									                                 
\end{enumerate}
\end{theorem}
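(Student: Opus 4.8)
The plan is to reduce the Riemann--Hilbert problem for conformal minimal immersions in $\r^3$ to the corresponding problem for holomorphic null curves, and then to solve the latter by a gluing argument modeled on the classical Riemann--Hilbert technique. First I would pass from $F$ to a holomorphic null lift: since $M$ is a compact bordered Riemann surface, the $(1,0)$-differential $\di F$ is an exact holomorphic $1$-form with values in the null quadric $\Agot_*$ after correcting periods, and $F=\Re \Phi$ for a holomorphic null curve $\Phi\colon M\to\c^3$ (unique up to an imaginary constant), provided we allow ourselves to work with the spray of period-dominating data; alternatively one works directly with the derivative $f=\di F/d\zeta$ in a fixed holomorphic coordinate and keeps track of periods at the end to guarantee $\Flux(G)=\Flux(F)$. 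Similarly, for each $\zeta\in I$ the conformal minimal immersion $\xi\mapsto\alpha(\zeta,\xi)$ lifts to a holomorphic null disc $\xi\mapsto\widehat\alpha(\zeta,\xi)$ with $\widehat\alpha(\zeta,0)=0$, depending $\Cscr^1$-smoothly on $\zeta\in I$; this is the point at which one uses that we are in dimension $n=3$, where the null quadric $\Agot_*\subset\c^3$ is a smooth $2$-dimensional cone admitting a global holomorphic parametrization (essentially the Weierstrass/Enneper representation), so that the fibered family of null discs can be built and glued with good control — this flexibility is exactly the advantage the authors advertise over \cite[Theorem 3.5]{AlarconDrinovecForstnericLopez2015}.

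The core construction is the following. Fix a smooth coordinate on $A$ identifying a collar of each boundary arc $I_j$ with $[0,1)\times I_j$ via the retraction $\rho$. On the boundary one wants $G$ to look like $\zeta\mapsto F(\zeta)+\alpha(\zeta,r(\zeta)\,\t)$, i.e.\ one wants to attach, over each point $\zeta\in I$, a thin ``finger'' in the null direction $\widehat\alpha(\zeta,\cdot)$ of width $r(\zeta)$. To do this on the level of null curves, pick a small $\epsilon>0$ and a smooth function $\chi\colon M\to[0,1]$ supported in $\Omega$, equal to $1$ on a slightly smaller neighborhood of $I$, and construct over the collar a holomorphic (in the disc variable) family: roughly, replace $\Phi$ near $bM$ by $\Phi(\zeta)+\widehat\alpha\bigl(\varsigma(\zeta), r(\varsigma(\zeta))\,\beta(\zeta)\bigr)$ where $\varsigma$ is the retraction to $bM$ and $\beta\colon A\to\cd$ is a holomorphic function, defined on the collar, which is close to the identity on $bM$ and close to $0$ near the inner boundary of the collar. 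Because the perturbation lives in the null directions at each point, after a Mergelyan/Runge approximation on $M$ (using that $\Phi$ extends holomorphically past $bM$, or working on a slightly larger bordered surface) one obtains a genuine holomorphic null curve $\Psi\colon M\to\c^3$ which is $\Cscr^1$-close to $\Phi$ on $M\setminus\Omega$, whose boundary values approximate $\varkappa(\zeta,\t)$ on $bM$, and whose values on $\Omega$ approximate $\varkappa(\rho(\zeta),\cd)$ (the latter because the finger sweeps out the whole disc of radius $r$ as $\beta$ ranges over $\cd$). The period-domination trick — prepending a finite-dimensional holomorphic spray of null curves whose period map is a submersion onto $H_1(M,\z)\otimes\c^3$ — lets us correct the periods of $\Psi$ without spoiling the estimates, and the real part of the result is the desired $G$; setting $G=\Re\Psi$ after period correction gives $\Flux(G)=\Flux(F)$ automatically since we corrected only by imaginary periods.

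The main obstacle, and the step to treat most carefully, is the simultaneous control of (i) the holomorphicity/conformality of the perturbed map, (ii) the $\Cscr^1$-smallness of the change away from $\Omega$, and (iii) the requirement that on $\Omega$ the image $G(\zeta)$ be within $\eta$ of the \emph{entire} disc $\varkappa(\rho(\zeta),\cd)$, not just of a single point. Item (iii) forces the auxiliary function $\beta\colon A\to\cd$ to be genuinely surjective onto a large concentric subdisc as $\zeta$ ranges over each fiber $\rho^{-1}(\zeta_0)\cap\Omega$, while item (ii) forces $\beta$ to decay near the inner edge of the collar; reconciling these with holomorphicity is the classical heart of the Riemann--Hilbert method and is handled by choosing $\beta$ of the form $\zeta\mapsto (\text{boundary point})\cdot b_N(\zeta)$ with $b_N$ a suitable Blaschke-type or high-power function concentrating its modulus near $bM$, combined with splitting $I$ into the subarcs $I_j$ and treating each separately (using that the $I_j$ are pairwise disjoint and are not whole components of $bM$, so the collar over each $I_j$ is a genuine half-disc with free boundary on both sides). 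Once $\beta$ is pinned down, conditions i)--iv) follow from standard Cauchy estimates, the uniform bounds on $\widehat\alpha$ and its first derivatives, and the Runge/Mergelyan approximation on the bordered surface $M$; I do not anticipate further difficulties beyond careful bookkeeping of the neighborhoods $\Omega$ and the supports of $\chi$, $r$, and $\beta$.
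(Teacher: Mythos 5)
Your outline correctly identifies the general strategy (pass to null holomorphic data in $\c^3$, exploit the special structure of the null quadric, fix periods at the end), but the core analytic step is assumed rather than proved. You propose to modify the null lift additively, replacing $\Phi(\zeta)$ near $bM$ by $\Phi(\zeta)+\widehat\alpha\bigl(\varsigma(\zeta),r(\varsigma(\zeta))\beta(\zeta)\bigr)$, and then to invoke ``Mergelyan/Runge approximation'' to obtain a genuine null curve $\Psi$ with the boundary control i)--iii). This step fails as stated, for two reasons. First, the modified map is not holomorphic in $\zeta$: the retraction $\varsigma$ and the dependence of $\widehat\alpha$ on its first variable are only of class $\Cscr^1$. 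Second, and more fundamentally, even a holomorphic additive perturbation of a null curve by a family of null discs is not null: the null quadric \eqref{eq:Agot} is a quadratic cone, not a linear subspace, so sums of null (or minimal) discs are not null (minimal) --- the very phenomenon the paper points out when explaining why Oka's argument breaks down for $-\log\dist(\cdot,bD)$. Runge/Mergelyan-type theorems for null curves approximate maps that are already null holomorphic on neighborhoods of compact (admissible) sets and provide no Riemann--Hilbert boundary estimates, so they cannot upgrade your glued, non-holomorphic, non-null map to a null curve satisfying i)--iii). What you dismiss as ``standard Cauchy estimates and Runge/Mergelyan'' is exactly the content of the Riemann--Hilbert lemma for null holomorphic discs in $\c^3$, \cite[Lemma 3.1]{AlarconDrinovecForstnericLopez2015} (going back to \cite[Theorem 4]{AlarconForstneric2015MA}), whose proof is carried out at the level of the spinor/Weierstrass parametrization of $\Agot_*$ and is the genuinely delicate point. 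The paper does not reprove it: for $M=\cd$ it reduces Theorem \ref{th:RH} to that lemma by lifting to null discs and taking real parts, and for general $M$ it follows the proofs of \cite[Theorems 3.5 and 3.6]{AlarconDrinovecForstnericLopez2015}, replacing Lemma 3.3 there (flat discs, any $n\ge 3$) by Lemma 3.1 (arbitrary null discs, $n=3$).

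Two further points. You misread condition ii): it asserts that the point $G(\zeta)$ lies within $\eta$ of the set $\varkappa(\rho(\zeta),\overline\d)$, a containment-type statement, not that $G$ sweeps out the whole disc; so no surjectivity of your auxiliary function $\beta$ onto large subdiscs is required, and the ``main obstacle'' you single out is not part of the statement (only condition i) forces the effective disc parameter to have modulus near $1$ on $I$). Also, the flux bookkeeping in your last step is inconsistent: if $\Psi\colon M\to\c^3$ is a single-valued null curve, then $G=\Re\Psi$ has vanishing flux, so ``$G=\Re\Psi$ after period correction'' cannot yield $\Flux(G)=\Flux(F)$ when $\Flux(F)\neq 0$. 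One must instead work with the $1$-form $\partial F$ (the Weierstrass data) and a period-dominating spray arranged so that the real periods vanish while the imaginary periods are kept equal to $\Flux(F)$ --- which is how the cited Theorems 3.5--3.6 proceed; your write-up conflates the two.
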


\begin{proof}
If $M$ is the disc $\cd$, the conclusion  follows from \cite[Lemma 3.1]{AlarconDrinovecForstnericLopez2015}
which gives an analogous result for  null holomorphic immersions in $\c^3$. Since
every conformal minimal disc $\cd\to \r^3$ is the real part of a holomorphic null disc $\cd\to\c^3$,
the cited lemma can be used for the corresponding families of null discs;  the real 
part $G$ of the resulting null  disc  then satisfies the conclusion of Theorem \ref{th:RH}. 
(The loss of smoothness in harmonic conjugates is not important 
since we can restrict  our maps to a slightly smaller disc.) 

In the general case, for an arbitrary bordered Riemann surface $M$, 
one follows the proof of \cite[Theorems 3.5 and 3.6]{AlarconDrinovecForstnericLopez2015}, but replacing 
\cite[Lemma 3.3]{AlarconDrinovecForstnericLopez2015} by  \cite[Lemma 3.1]{AlarconDrinovecForstnericLopez2015}. 
The former one holds in any dimension $n\ge 3$,  but only applies to 
flat conformal minimal discs $\alpha(\zeta,\cdotp)\colon\cd\to\r^n$ lying in parallel
$2$-planes, while  the latter one holds without any such restriction on $\alpha$, but only in dimension $n=3$.
\end{proof}

The next result  is the main technical ingredient in the proofs of Theorems \ref{th:main1},  
\ref{th:main1-bis}, and \ref{th:main2}.  Similar techniques have been used for lifting boundaries 
of complex curves and Stein varieties in $q$-convex manifolds;
see e.g.\ \cite{DrinovecForstneric2007DMJ,DrinovecForstneric2010AJM} and the references therein.

%
%
%
%
\begin{proposition}[Lifting boundaries of conformal minimal surfaces]
\label{prop:lifting}
Let $D$ be a domain in $\r^3$ and $\rho\colon D \to \r$ be a $\Cscr^2$ minimal strongly plurisubharmonic 
function with the critical locus $P$. Given a compact set $L \subset D\setminus P$, there exist constants
$\epsilon_0>0$ and $C_0>0$ such that the following holds.

Let $M$ be a compact bordered Riemann surface, and let $F\colon M\to D$ be a conformal minimal immersion 
of class $\Cscr^1(M)$. Given a continuous function $\epsilon \colon bM\to [0,\epsilon_0]$ supported on the set
$J=\{\zeta\in bM: F(\zeta) \in L\}$, an open set $U\subset M$ containing $\supp(\epsilon)$ in its relative interior, 
and a constant $\delta>0$, there exists a conformal minimal immersion 
$G\colon M\to D$ satisfying the following conditions:
\begin{enumerate}
\item $|\rho(G(\zeta)) - \rho(F(\zeta)) -\epsilon(\zeta)| <\delta$ for every $\zeta\in bM$;           
\item $\rho(G(\zeta))\ge \rho(F(\zeta)) -\delta$ for every $\zeta\in M$;                                      
\item $\|G-F\|_{1,M\setminus U}<\delta$;      				                                          
\item $\|G-F\|_{0,M} \le C_0 \sqrt{\epsilon_0}$; 								      
\item $\Flux(G)=\Flux(F)$.									                                 
\end{enumerate} 
\end{proposition}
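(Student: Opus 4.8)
The strategy is to iteratively apply the Riemann--Hilbert deformation of Theorem~\ref{th:RH}, fed with the attached family of conformal minimal discs furnished by Lemma~\ref{lem:M-discs}, so as to push the boundary $F(bM)$ upward through the level sets of $\rho$ in a single controlled step of size $\epsilon$, while moving $F$ only slightly on the rest of $M$. First I would fix the compact set $L'\Subset D\setminus P$ equal to a small compact neighborhood of $L$ (so that all discs in Lemma~\ref{lem:M-discs} based at points of $L$ stay inside $D\setminus P$), apply Lemma~\ref{lem:M-discs} to $L'$ to get the constant $c=c_{L'}>0$ and the $\Cscr^1$ families of embedded null holomorphic discs $\sigma^j_{\bx}=\alpha^j_{\bx}+\imath\beta^j_{\bx}\colon\cd\to\c^3$ with $\alpha^j_{\bx}(0)=0$ satisfying $\rho(\bx+\alpha^j_{\bx}(\zeta))\ge\rho(\bx)+c\|\zeta\|^2$ and convexity of $\zeta\mapsto\rho(\bx+\alpha^j_{\bx}(\zeta))$. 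Since $J\subset bM$ is compact and $F$ is continuous, $F(J)\subset L'$; then I would choose $\epsilon_0>0$ so small that $\sqrt{\epsilon_0/c}<\tfrac12$ and that for every $\bx\in L'$ and every $|t|\le\sqrt{\epsilon_0/c}$ the point $\bx+\alpha^j_{\bx}(t)$ (real argument $t$) raises $\rho$ by at least $ct^2$ but by at most some controlled amount; this fixes $\epsilon_0$ and $C_0$ (one takes $C_0$ to bound $\sup\|\alpha^j_{\bx}\|_{\cd}/\sqrt{c}$ over $\bx\in L'$).

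\emph{Setting up the Riemann--Hilbert data.} With $\epsilon\colon bM\to[0,\epsilon_0]$ supported on $J$ given, I would cover $\supp(\epsilon)$ by finitely many small compact subarcs $I_1,\dots,I_k$ of $bM$, none a whole boundary component, contained in $U$, on each of which $F$ varies little, and I would use a partition of unity to write $\epsilon=\sum_m \epsilon_m$ with $\epsilon_m$ supported on $I_m$. On $I_m$ I set $r(\zeta)=\sqrt{\epsilon_m(\zeta)/c}\le\sqrt{\epsilon_0/c}$ and take the disc family $\alpha(\zeta,\xi)=\alpha^{j(m)}_{F(\zeta)}(\xi)$ for a fixed choice $j(m)\in\{1,2\}$ (the absence of a global ordering is harmless, as each $I_m$ is an arc). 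Then, with $\varkappa(\zeta,\xi)=F(\zeta)+\alpha(\zeta,r(\zeta)\xi)$ as in \eqref{eq:varkappa}, Theorem~\ref{th:RH} produces a conformal minimal immersion $G\colon M\to\r^3$ of class $\Cscr^1(M)$ with $\Flux(G)=\Flux(F)$, with $\dist(G(\zeta),\varkappa(\zeta,\t))<\eta$ on $bM$, $\dist(G(\zeta),\varkappa(\rho(\zeta),\cd))<\eta$ on a neighborhood $\Omega$ of $\bigcup I_m$, and $\|G-F\|_{1,M\setminus\Omega}<\eta$, for $\eta>0$ as small as we wish. (If $\supp(\epsilon)=\emptyset$ we take $G=F$; otherwise we may need to iterate over the $k$ arcs one at a time, each step being of the above type, which is routine.)

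\emph{Verifying the five conclusions.} Choosing $\eta$ small and using that $\rho$ is uniformly continuous on a neighborhood of $F(M)\cup(\text{the compact set swept by }\varkappa)$: for $\zeta\in bM$, $\varkappa(\zeta,\t)=F(\zeta)+\alpha^{j}_{F(\zeta)}(r(\zeta)e^{\imath\phi})$, and since $\zeta\mapsto\rho(F(\zeta)+\alpha^j_{F(\zeta)}(\zeta'))$ is \emph{convex} in $\zeta'\in\cd$ with value $\rho(F(\zeta))$ at $\zeta'=0$, its average over the circle of radius $r(\zeta)$ is at least $\rho(F(\zeta))+c\,r(\zeta)^2=\rho(F(\zeta))+\epsilon_m(\zeta)$ by \eqref{eq:estimate-c}, while subharmonicity of $\rho\circ(\text{null disc})$ combined with the explicit quadratic control gives the matching upper bound up to $\delta/2$; hence (1) $|\rho(G(\zeta))-\rho(F(\zeta))-\epsilon(\zeta)|<\delta$ on $bM$. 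For (2): on $M\setminus\Omega$, $\|G-F\|_0<\eta$ gives $\rho(G)\ge\rho(F)-\delta$; on $\Omega$, $G(\zeta)$ is $\eta$-close to the disc $\varkappa(\rho(\zeta),\cd)$, every point of which has $\rho$-value $\ge\rho(F(\rho(\zeta)))\ge\rho(F(\zeta))-\delta$ using \eqref{eq:estimate-c} again (the disc only \emph{raises} $\rho$) together with continuity of $\rho\circ F$ along the retraction $\rho\colon A\to bM$ — enlarging the covering and shrinking $\Omega$ as needed. Condition (3) is immediate from $\|G-F\|_{1,M\setminus\Omega}<\eta\le\delta$ once $\Omega\subset U$, which we arrange by taking the $I_m$ and hence $\Omega$ inside $U$. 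Condition (4): $\|G-F\|_{0,M}\le\eta+\sup_{\zeta,\xi}\|\alpha^{j}_{F(\zeta)}(r(\zeta)\xi)\|\le\eta+C_0\sqrt{\epsilon_0}$, and shrinking $\eta$ absorbs the first term; redefining $C_0$ slightly gives $\|G-F\|_{0,M}\le C_0\sqrt{\epsilon_0}$. Condition (5) is built into Theorem~\ref{th:RH}.

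\emph{Main obstacle.} The delicate point is conclusion (2): guaranteeing that the large boundary deformation does not pull $\rho$ down by more than $\delta$ \emph{anywhere} on $M$, in particular on the collar region $\Omega$ where $G$ is only close to the disc family, not to $F$ itself. The resolution rests entirely on the fact that the discs $\alpha^j_{\bx}$ of Lemma~\ref{lem:M-discs} are \emph{upward} discs — $\rho$ restricted to each is convex with a strict minimum at the center and grows quadratically — so that \emph{every} point of $\varkappa(\zeta,\cd)$ has $\rho$-value $\ge\rho(F(\zeta))$, and the only loss comes from the $\Cscr^0$-approximation error $\eta$ and from the variation of $\rho\circ F$ across the small arcs, both of which are made $<\delta$. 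Making this quantitative — i.e.\ extracting $\epsilon_0$ and $C_0$ \emph{uniform over all $M$ and all $F$ with $F(J)\subset L$}, depending only on $\rho$ and $L$ — requires only the uniformity of $c_{L'}$, $\delta_{\bx}$ and $\sup\|\alpha^j_{\bx}\|$ over the compact set $L'$, which Lemma~\ref{lem:M-discs} supplies; everything else is a routine $\eta$-$\delta$ bookkeeping once the covering of $\supp(\epsilon)$ by small arcs is fixed.
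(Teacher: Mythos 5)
Your overall architecture (Lemma \ref{lem:M-discs} discs fed into Theorem \ref{th:RH}, arcs inside $U$, uniform constants from compactness of a neighborhood $L'$ of $L$) matches the paper, but there is a genuine gap at the heart of conclusion (1): the upper bound $\rho(G(\zeta))<\rho(F(\zeta))+\epsilon(\zeta)+\delta$. You encode the lift amount into the radius, setting $r(\zeta)=\sqrt{\epsilon_m(\zeta)/c}$ and using the round circle $\varkappa(\zeta,\t)=F(\zeta)+\alpha^j_{F(\zeta)}(r(\zeta)\,\t)$. Estimate \eqref{eq:estimate-c} is only a \emph{lower} bound; the uniform two-sided bound one can extract from smoothness is $c\|\xi\|^2\le\rho(\bx+\alpha^j_\bx(\xi))-\rho(\bx)\le C\|\xi\|^2$ with $C$ typically much larger than $c$, so on your circle of radius $\sqrt{\epsilon_m/c}$ the value of $\rho$ can exceed $\rho(F(\zeta))$ by as much as $(C/c)\,\epsilon_m(\zeta)$, which is not within $\delta$ of $\epsilon(\zeta)$ for small $\delta$. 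Moreover, conclusion \emph{(i)} of Theorem \ref{th:RH} only places $G(\zeta)$ within $\eta$ of \emph{some} point of the circle $\varkappa(\zeta,\t)$, so your appeal to the \emph{average} of $\rho$ over that circle (and the claim that ``subharmonicity \ldots gives the matching upper bound'') does not control $\rho(G(\zeta))$: one needs $\rho$ to be approximately constant, equal to $\rho(F(\zeta))+\epsilon(\zeta)$, on the \emph{whole} curve $\varkappa(\zeta,\t)$, and round circles do not have this property. Note that the upper bound is not cosmetic: in Lemma \ref{lem:lifting2} one must keep $\rho\circ G<b$ on $bM$, so uncontrolled overshoot breaks the induction.

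The missing idea is the paper's conformal reparametrization of each disc onto a sublevel set of $\rho$ along the disc. Since $\xi\mapsto\rho(F(\zeta)+\alpha_{F(\zeta)}(\xi))$ is strongly convex with its only minimum at $\xi=0$, the set $\Dscr_\zeta=\{\xi\in\d:\rho(F(\zeta)+\alpha_{F(\zeta)}(\xi))<\rho(F(\zeta))+\tilde\epsilon(\zeta)\}$ (with $\tilde\epsilon$ a smoothing of $\max\{\epsilon,\delta/3\}$, so $|\tilde\epsilon-\epsilon|<\delta/2$) is a smoothly varying simply connected domain compactly contained in $\d$; composing $\alpha_{F(\zeta)}$ with holomorphic diffeomorphisms $\phi_\zeta\colon\cd\to\overline{\Dscr_\zeta}$, $\phi_\zeta(0)=0$, one takes $\alpha(\zeta,\xi)=\alpha_{F(\zeta)}(\phi_\zeta(\xi))$ and $r=1$ where $\epsilon\ge\delta/2$. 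Then $\varkappa(\zeta,\t)$ lies exactly on the level $\rho=\rho(F(\zeta))+\tilde\epsilon(\zeta)$, and both inequalities in (1) follow at once from Theorem \ref{th:RH}\,\emph{(i)}; conclusions (2)--(5) then come out essentially as you argue (with the full-boundary-curve case handled, as you suggest, by splitting $\epsilon$ and applying the special case twice). Without this reparametrization step your construction only yields a one-sided version of (1), so the proof as written does not establish the proposition.
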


\begin{proof}
By approximation, we may assume that $F$ is of class $\Cscr^\infty(M)$ 
(see \cite{AlarconForstnericLopez2016MZ,AlarconLopez2012JDG}).

Pick a compact set $L_0\subset D\setminus P$ which contains $L$ in its interior. 
Let $c_{L_0}$ be the constant  furnished by Lemma \ref{lem:M-discs} for the set $L_0$,
and choose a number $\epsilon_0$ such that $0<\epsilon_0< c_{L_0}$.
Set $J_0=\{\zeta\in bM: F(\zeta) \in L_0\}$.  By approximation, we may assume that 
the function $\epsilon \colon bM\to [0,\epsilon_0]$ in Proposition \ref{prop:lifting} is smooth
and supported in the relative interior of $J_0\cap U$. 

Assume first that the support of $\epsilon$ does not contain any boundary curves of $M$; 
the general case will be obtained by two consecutive  applications of this special case.
Choose finitely many closed pairwise disjoint segments  $I_1,I_2,\ldots, I_m\subset  J_0 \cap U$ 
whose union $I=\bigcup_{j=1}^m I_j$ contains $\supp(\epsilon)$ in its relative interior.
Note that $F(I)\subset L_0$. Since $I$ is simply connected, Lemma \ref{lem:M-discs} 
(see in particular (\ref{eq:estimate-c})) furnishes 
a family of conformal minimal discs $\alpha_{F(\zeta)}\colon\cd\to \r^3$, depending smoothly
on  $\zeta\in I$, such that 
\begin{equation}\label{eq:lower-estimate}
	\rho(F(\zeta)+\alpha_{F(\zeta)}(\xi))\ge \rho(F(\zeta)) + c_{L_0}> 
	\rho(F(\zeta)) + \epsilon_0, \quad   \zeta\in I,\   |\xi|=1.
\end{equation}
Without loss of generality we may assume that $\delta<3 \epsilon_0$. 
Let $\tilde \epsilon\colon I\to [\delta/3,\epsilon_0]$ be obtained by smoothing the function
$\max\{\epsilon,\delta/3\}$; in particular, we assume that $\tilde\epsilon=\epsilon$ on the set where 
$\epsilon\ge \delta/2$ and $\delta/3 \le \tilde\epsilon < \delta/2$ on the complementary set. 
The properties of the discs $\alpha_{\bx}$, furnished by Lemma \ref{lem:M-discs}, imply 
that for every fixed $\zeta\in I$ the function $\d\ni \xi \mapsto \rho(F(\zeta) + \alpha_{F(\zeta)} (\xi))$
is strongly convex, with a minimum at $\xi=0$ and no other critical points. 
In view of (\ref{eq:lower-estimate}) the set
\begin{equation}\label{eq:Dzeta}
	\Dscr_\zeta := \{\xi\in \d :  
	\rho(F(\zeta) + \alpha_{F(\zeta)} (\xi)) < \rho(F(\zeta)) + \tilde \epsilon(\zeta)\}
\end{equation}
contains the origin, is simply connected (a disc), and is compactly contained in $\d$;
furthermore, the discs $\Dscr_\zeta$ depend smoothly on the point $\zeta\in I$.  
Choose a smooth family of diffeomorphisms  $\phi_\zeta\colon \cd\to \overline \Dscr_\zeta$ 
$(\zeta\in I)$ which are holomorphic in $\d$ and satisfy $\phi_\zeta(0)=0$. 
Let $\alpha\colon I\times\cd\to \r^3$ be defined by
\begin{equation}\label{eq:alpha}
	\alpha(\zeta,\xi) = \alpha_{F(\zeta)} (\phi_\zeta(\xi)),\quad \zeta\in I,\  \xi\in\cd.
\end{equation}
Pick a smooth function $r\colon I\to [0,1]$ such that $r(\zeta)=1$ when $\epsilon(\zeta)\ge \delta/2$ 
and the support of $r$ is contained in the relative interior of $J_0\cap U$.

We now apply Theorem \ref{th:RH} to the conformal minimal immersion $F\colon M\to D$,
the map $\alpha$ given by (\ref{eq:alpha}), and the function $r$. It is straightforward to verify that the 
resulting conformal minimal immersion $G\colon M\to D$ 
satisfies the conclusion of Proposition \ref{prop:lifting} provided that the number $\eta>0$ 
in  Theorem \ref{th:RH} is chosen small enough.
The existence of a constant $C_0>0$ satisfying the estimate {\em (4)} in Proposition \ref{prop:lifting} 
is immediate from the geometry of the discs $\alpha_\bx^j(\cdotp)$  furnished by 
Lemma \ref{lem:M-discs}. Indeed, we clearly have a uniform estimate $\|\alpha_\bx^j(\xi)\|\le b|\xi|$ 
$(\xi\in\cd, \ \bx\in L,\ j=1,2)$  for some constant $b>0$.
From (\ref{eq:alpha}), we get $\|\alpha(\zeta,\xi)\| \leq  b|\phi_\zeta(\xi)|$ for $\zeta\in I$ and $\xi\in\cd$.
Together with \eqref{eq:estimate-c}, \eqref{eq:Dzeta}, and \eqref{eq:alpha} one obtains
\[	
 \epsilon_0\ge \tilde\epsilon(\zeta) \ge 
 \rho(F(\zeta)+\alpha(\zeta,\xi)) - \rho(F(\zeta)) \geq c|\phi_\zeta(\xi)|^2 \geq c/b^2 \|\alpha(\zeta,\xi)\|^2
\]
which gives $\|\alpha(\zeta,\xi)\| \le C_0 \sqrt{\epsilon_0}$  with $C_0=b/\sqrt{c}$. 
By increasing $C_0$ slightly, this gives {\em (4)} provided that 
the approximation in Theorem \ref{th:RH}  (see (\ref{eq:varkappa}) and {\it (i)}) is close enough. 

If the support of the function $\epsilon$ contains a boundary curve of $M$, then we write 
$\epsilon=\epsilon_1 + \epsilon_2$ where each of the two nonnegative functions 
$\epsilon_1,\epsilon_2 \colon bM\to  [0,\epsilon_0]$ satisfies the conditions of the special
case considered above. By first deforming $F$ to $G_1$ using the function $\epsilon_1$,
and subsequently deforming $G_1$ to $G=G_2$ using the function $\epsilon_2$,
the resulting conformal minimal immersion $G$ satisfies the conclusion of Proposition \ref{prop:lifting}, 
provided that the approximations are sufficiently close at each step.
\end{proof}

%
%
%
%
We now explain how to avoid critical points of a Morse exhaustion function $\rho\colon D\to\r$ when applying 
Proposition \ref{prop:lifting}. To this end, we adapt the method from \cite[Section 3.11]{Forstneric2017book}.

\begin{definition} A critical point $\bx_0$ of a $\Cscr^2$ function $\rho$ is {\em nice}
if,  in some neighborhood of $\bx_0$, $\rho$ agrees with its second order Taylor polynomial
at $\bx_0$.
\end{definition}

\begin{lemma}\label{lem:nice}
Every Morse function $\rho$ can be approximated arbitrarily closely in the fine $\Cscr^2$ topology
by a Morse function $\wt \rho$ with the same critical locus and with nice critical points. 
Furthermore, $\wt\rho$ can be chosen to agree with $\rho$ outside an arbitrarily small 
neighborhood of the critical locus.
\end{lemma}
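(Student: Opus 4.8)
The plan is to produce $\wt\rho$ by a purely local modification of $\rho$ in disjoint coordinate balls around the (discrete) critical points, gluing via a cutoff function, and then checking that the perturbation can be made $\Cscr^2$-small and Morse-preserving. Since $\rho$ is Morse, its critical locus $P$ is discrete and closed in $D$; choose pairwise disjoint coordinate balls $B_i$ centered at the critical points $\bx_i\in P$, small enough that each $B_i$ lies in the prescribed neighborhood of $P$. On each $B_i$, after a translation put $\bx_i$ at the origin, and let $Q_i(\bx)=\rho(\bx_i)+\tfrac12\Hess_\rho(\bx_i;\bx)$ be the second-order Taylor polynomial of $\rho$ at $\bx_i$; because $\bx_i$ is a nondegenerate critical point, $Q_i$ has a nondegenerate critical point at the origin and no other critical point. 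Write $\rho=Q_i+R_i$ on $B_i$ where $R_i(\bx)=o(\|\bx\|^2)$, so $R_i(\zero)=0$, $dR_i(\zero)=0$, $\Hess_{R_i}(\zero)=0$.

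First I would fix a smooth cutoff $\chi\colon\r^n\to[0,1]$ with $\chi\equiv 1$ near the origin and $\supp\chi\subset B(0,1)$, and for a small scaling parameter $t>0$ set $\chi_t(\bx)=\chi(\bx/t)$. Define, on each $B_i$,
\[
	\wt\rho := \rho-\chi_t R_i = Q_i+(1-\chi_t)R_i,
\]
and $\wt\rho:=\rho$ elsewhere; these agree on the overlap region $\{\chi_t=0\}$, so $\wt\rho$ is globally $\Cscr^2$ (indeed as smooth as $\rho$). Near each $\bx_i$, on the ball $\{\chi_t=1\}$, we have $\wt\rho=Q_i$, a quadratic polynomial, so $\bx_i$ is a nice critical point. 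Outside $\bigcup_i\supp\chi_t$ we have $\wt\rho=\rho$, giving the last assertion of the lemma once $t$ is small. It remains to check that (i) $\wt\rho\to\rho$ in the fine $\Cscr^2$ topology as $t\to0$, and (ii) $\wt\rho$ has exactly the same critical locus $P$ and is Morse.

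For (i), the difference $\rho-\wt\rho=\chi_t R_i$ is supported in $B(0,t)$, and I would estimate its $\Cscr^2$-norm there using $R_i=o(\|\bx\|^2)$: on $B(0,t)$ one has $\|R_i\|_{\Cscr^0}=o(t^2)$, $\|dR_i\|_{\Cscr^0}=o(t)$, $\|\Hess_{R_i}\|_{\Cscr^0}=o(1)$, while $\|\chi_t\|_{\Cscr^0}=\|\chi\|_{\Cscr^0}$, $\|d\chi_t\|_{\Cscr^0}=O(1/t)$, $\|\Hess_{\chi_t}\|_{\Cscr^0}=O(1/t^2)$; Leibniz then gives $\|\chi_t R_i\|_{\Cscr^2}=o(1)\cdot O(1)=o(1)$ as $t\to0$. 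Since the supports shrink to $P$ and $P$ is discrete, this smallness is local and hence fine-$\Cscr^2$-small; choosing $t$ small enough beats any prescribed positive continuous function on $D$. For (ii), the harder of the two but still routine: outside $\bigcup B(0,t)$ nothing changed; on the inner ball $\{\chi_t=1\}$ we have $\wt\rho=Q_i$ whose only critical point is $\bx_i$, nondegenerate; on the transition annulus $\{0<\chi_t<1\}$ I need $d\wt\rho\ne0$. There $d\wt\rho=dQ_i+d((1-\chi_t)R_i)$; since $\bx_i$ is nondegenerate, $\|dQ_i(\bx)\|\ge c\|\bx\|\ge c\cdot(\text{inner radius})$ on the annulus for some $c>0$ depending only on $\Hess_\rho(\bx_i)$, while $\|d((1-\chi_t)R_i)\|$ is bounded by the $\Cscr^1$-estimate above, namely $o(t)+O(1/t)\cdot o(t^2)=o(t)$, which is smaller than $c\cdot(\text{const}\cdot t)$ for $t$ small. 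Hence $d\wt\rho\ne0$ on the annulus, so $\mathrm{Crit}(\wt\rho)=\{\bx_i\}_i=P$ with each point nice and nondegenerate, i.e. $\wt\rho$ is Morse.

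The one place demanding a little care — and the main obstacle — is precisely this $\Cscr^1$ lower bound on the annulus: one must choose the inner radius of $\{\chi_t=1\}$ proportional to $t$ (take $\chi\equiv1$ on $B(0,1/2)$, say), so that $\|dQ_i\|\gtrsim t$ there dominates the $o(t)$ error coming from the cutoff of the quadratically-small remainder. With that proportionality built into the fixed $\chi$, all estimates scale correctly and the proof closes.
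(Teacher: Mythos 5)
Your proposal is correct and follows essentially the same route as the paper: write $\rho=Q+\eta$ with $Q$ the second-order Taylor polynomial at each (isolated) nondegenerate critical point, kill the remainder $\eta=o(\|\bx-\bx_i\|^2)$ near the point with a rescaled cutoff, and note that the resulting perturbation is $\Cscr^2$-small because the $O(t^{-1})$, $O(t^{-2})$ growth of the cutoff derivatives is beaten by the $o(t^2)$, $o(t)$, $o(1)$ decay of $\eta$ and its derivatives on the support. Your explicit gradient estimate on the transition annulus (nondegeneracy giving $\|dQ\|\gtrsim t$ versus an $o(t)$ error) just spells out the verification that no new critical points appear, which the paper leaves implicit in the choice of small $\epsilon$.
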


\begin{proof}
Assume that $\bx_0$ is an (isolated) critical point of $\rho$ and 
\[
	\rho(\bx) = Q(\bx) + \eta(\bx),\quad 
	\lim_{\bx\to\bx_0} \frac{\eta(\bx)}{\|\bx-\bx_0\|^2} = 0.
\]
Choose a smooth increasing function $\chi\colon \r\to [0,1]$ such that $\chi(t)=0$ for $t\le 1$
and $\chi(t)=1$ for $t\ge 2$. Given $\epsilon>0$, we consider the function
\[
	\rho_\epsilon(\bx) =  Q(\bx)  + \chi(\epsilon^{-1}\|\bx-\bx_0\|) \, \eta(\bx).
\]
Then $\rho_\epsilon=Q$ on the ball $\|\bx-\bx_0\|\le \epsilon$ and $\rho_\epsilon=\rho$
on  $\|\bx-\bx_0\|\ge 2\epsilon$. As $\epsilon\to 0$, the $\Cscr^2$ norm of 
$\rho(\bx) - \rho_\epsilon(\bx) = (1-\chi(\epsilon^{-1}\|\bx-\bx_0\|))  \, \eta(\bx)$ tends to zero.
If $\epsilon>0$ is chosen small enough, then $\rho_\epsilon$ satisfies the conclusion of the lemma
at the critical point $\bx_0$. The same modification can be performed simultaneously at all 
critical points of $\rho$.
\end{proof}

A minimal strongly plurisubharmonic function has no critical points of index greater than $1$ (see Remark \ref{rem:homotopy}).
Critical points of index zero are local minima and are not approached by the boundary $F(bM)$ 
when applying Proposition \ref{prop:lifting}.

Assume now that $\bx_0$ is a nice Morse critical point  of $\rho$ with Morse index $1$. 
The subsequent analysis is local near $\bx_0$, so we may assume, after a rigid motion of $\r^3$,
that $\bx_0=\zero\in\r^3$, $\rho(\bx_0)=0$, and
\begin{equation}\label{eq:rho}
	\rho(\bx)=\rho(x_1,x_2,x_3) =   - a_1 x_1^2 + a_2 x_2^2 + a_3 x_3^2 + \eta(\bx),
\end{equation}
where $-a_1<0<a_2\le a_3$ and the function $\eta$ vanishes in a neighborhood of the origin. 
Note that $a_1<a_2$ since $\rho$ is minimal strongly  plurisubharmonic. 
Choose a number $c_0 >0$ small enough such that $\eta$ vanishes on the set
\begin{equation}\label{eq:P0}
	P_{c_0} := \{(x_1,x_2,x_3) \in \r^3: a_1 x_1^2 \le c_0,\ a_2 x_2^2+a_3 x_3^2 \le 4c_0\}.
\end{equation}
The straight line arc $E\subset \r^3$, defined by
\begin{equation}\label{eq:E}
	E=\{(x_1,0,0)\in \r^3: a_1 x_1^2 \le c_0\},
\end{equation}
 is a local stable manifold of the critical point $\zero$ of $\rho$. 
 Set $\lambda=a_2/a_1 >1$.  Choose a number $\mu\in\r$ with $1<\mu<\lambda$ and set  
\begin{equation}\label{eq:t0}
 	t_0=c_0(1-1/\mu)^2;
\end{equation}
hence $0<t_0<c_0(1-1/\lambda)^2 <c_0$. 

The following is \cite[Lemma 3.11.1, p.\ 98]{Forstneric2017book}, adapted to the situation at hand.

%
%
%
%
\begin{lemma} \label{lem:passing-model}
(Assumptions as above.) Assume that $0$ is the only critical value of the function $\rho$ (\ref{eq:rho}) 
in the set $\{-c_0 < \rho <3c_0\}$. Then there exists a minimal strongly plurisubharmonic function 
$\tau\colon D\cap \{\rho<3c_0\}\to \r$ satisfying the following conditions: 
\begin{itemize}
\item[\rm (a)]  $\{\rho \le -c_0\} \cup E \subset \{\tau\le 0\} \subset \{\rho \le -t_0\}\cup E$
(here, $E$ is given by (\ref{eq:E}));
\item[\rm (b)]  $\{\rho \le c_0\} \subset \{\tau \le 2c_0\} \subset \{\rho < 3c_0\}$;
\item[\rm (c)]  there is a constant $t_1\in (t_0,c_0)$ such that $\tau=\rho +t_1$ outside 
the set $P_{c_0}$ (\ref{eq:P0});
\item[\rm (d)]  $\tau$ has no critical values in the interval $(0,2c_0]$. 
\end{itemize}
\end{lemma}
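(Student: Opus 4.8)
The plan is to adapt the construction of the model modification from \cite[Lemma 3.10.1]{Forstneric2011book}, keeping track of the extra requirement that the deformed function be \emph{minimal} strongly plurisubharmonic rather than merely strongly plurisubharmonic; this is where the condition $a_1<a_2$ (equivalently, $\lambda>1$) and the freedom in choosing $\mu\in(1,\lambda)$ will be used. First I would work entirely in the chart where $\rho$ has the normal form (\ref{eq:rho}) and $\eta\equiv0$ on $P_{c_0}$. Introduce the ``radial'' coordinates $s = a_1 x_1^2\ge 0$ and $t = a_2 x_2^2 + a_3 x_3^2\ge 0$, so that $\rho = -s + t$ plus the harmless tail $\eta$, and the stable arc $E$ is $\{t=0,\ s\le c_0\}$, while $P_{c_0}=\{s\le c_0,\ t\le 4c_0\}$. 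The idea is to set $\tau = \rho + h(s)$ off a neighborhood of $E$ and to let $\tau$ smoothly transition to a function of $t$ alone (plus a constant) on a neighborhood of $E$, where $s$ is small; concretely, one seeks $\tau = \phi(s) + t + \eta$ for a suitable smooth convex, nonincreasing-then-increasing profile $\phi$, or more precisely $\tau = t - g(s)$ where $g$ is a smooth function with $g(s) = s - t_1$ for $s\ge$ some threshold (giving property (c) with $\tau=\rho+t_1$ outside $P_{c_0}$) and $g(s)$ bounded above by $c_0$ near $s=0$ so that $E\subset\{\tau\le0\}$.

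The key steps, in order, are: (1) choose the profile $g$ on $[0,\infty)$ so that $g$ is smooth, $g(s)\le c_0$ with equality of derivatives arranged so that $\tau$ on $E$ (where $t=0$) is $\le 0$, and $g(s)=s-t_1$ for $s$ beyond the ``half-width'' of $P_{c_0}$; the constant $t_1\in(t_0,c_0)$ is read off from the construction; (2) verify the inclusions (a) and (b) by a direct comparison of sublevel sets of $\rho=-s+t$ and $\tau = t-g(s)$, using $t_0 = c_0(1-1/\mu)^2$ from (\ref{eq:t0}) to pin down the gap on the lower end and the bound $g\le c_0$ for the upper end; (3) check that $\tau$ has no critical points with $\tau\in(0,2c_0]$ — away from $P_{c_0}$ this is clear since there $\tau=\rho+t_1$ and $0$ is the only critical value of $\rho$ in $\{-c_0<\rho<3c_0\}$, while inside $P_{c_0}$ one computes $d\tau = dt - g'(s)\,ds = (2a_2 x_2\,dx_2 + 2a_3 x_3\,dx_3) - g'(s)\cdot 2a_1 x_1\,dx_1$, which vanishes only at $x_2=x_3=0$ and $g'(s)x_1=0$, i.e.\ on $E$ (where $s$ is small and $g'=0$), and there $\tau\le 0$; (4) the crucial point — check that $\tau$ remains minimal strongly plurisubharmonic, i.e.\ that the sum of the two smallest eigenvalues of $\Hess_\tau$ stays positive. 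By (\ref{eq:Hess-comp}) applied to $\rho\mapsto \tau = \rho + (h\circ s)$ — more carefully, writing $\tau$ as $\rho$ composed/combined with convex functions of the coordinates — the Hessian of $\tau$ differs from that of $\rho$ only in the $x_1x_1$ entry, where it picks up a term governed by $g$; one must ensure the modified $(1,1)$-entry, call it $-2a_1 + (\text{correction})$, never becomes so negative that, added to the \emph{second} smallest eigenvalue among $\{$(modified $11$), $2a_2$, $2a_3\}$, the sum drops to $0$. Since the two smallest eigenvalues of $\Hess_\rho$ were $-2a_1$ and $2a_2$ with $-2a_1+2a_2>0$, one has a definite margin $2(a_2-a_1)>0$ to play with, and choosing the profile $g$ so that its second derivative is controlled (in terms of this margin and the size of $P_{c_0}$, which is why $\mu$ is taken strictly less than $\lambda=a_2/a_1$) keeps $\tau$ minimal strongly plurisubharmonic throughout.

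The main obstacle I anticipate is exactly step (4): in \cite{Forstneric2011book} one only needs the modified function to stay strongly plurisubharmonic (all eigenvalues positive), which is comparatively robust, whereas here one must preserve positivity of a \emph{sum} of eigenvalues, so the correction injected into the $x_1$-direction is constrained from below in a way that interacts with the geometry of the handle. Concretely, the size of the region $P_{c_0}$ over which the smoothing of $s\mapsto s - t_1$ into the near-$E$ profile must take place is inversely related to how large $|g''|$ can be, and that in turn is bounded by the slack $a_2 - a_1$; the choice $t_0 = c_0(1-1/\mu)^2$ with $1<\mu<\lambda$ is precisely calibrated so that the transition fits. I would carry out the eigenvalue bookkeeping using (\ref{eq:Hess-comp}) (treating $\tau$ as obtained from $\rho$ by adding convex functions of single coordinates), noting that such additions change only diagonal Hessian entries and that the off-diagonal structure (hence the splitting into the three ``axis'' eigenvalues up to the $\eta$-tail, which vanishes on $P_{c_0}$) is preserved. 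Outside $P_{c_0}$ there is nothing to check since $\tau - \rho$ is locally constant, and the compatibility of the modification with the global $\rho$ on $D\cap\{\rho<3c_0\}$ follows from property (c) together with the hypothesis that $0$ is the only critical value of $\rho$ in $\{-c_0<\rho<3c_0\}$.
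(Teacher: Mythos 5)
Your construction is essentially the paper's: you replace $-a_1x_1^2=-s$ by $-g(s)$ for a convex profile $g$ that is flat near $s=0$, equals $s-t_1$ for $s\ge c_0$, and satisfies the differential constraint $2s\,g''(s)+g'(s)<\lambda$, which is precisely the condition the paper imposes on its function $h$ so that the modified $\partial^2/\partial x_1^2$ entry of the Hessian stays above $-2a_2$ (hence the sum of the two smallest eigenvalues stays positive), and your sublevel-set, critical-value, and calibration-of-$t_0$ remarks match the paper's argument, including its appeal to the elementary construction of the profile. The only nit is that near $s=0$ what you need is $g\ge 0$ (so that $\tau\le 0$ on $E$), not $g\le c_0$, but the flat profile you describe gives this anyway.
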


The sublevel sets $\{\tau<c\}$ for $c>0$ in a neighborhood of the origin are shown in 
\cite[Figure 3.5, p.\ 100]{Forstneric2017book} (in a similar setting of strongly plurisubharmonic functions).

\begin{proof}
The choice of the number $t_0$ \eqref{eq:t0} implies that there is a smooth convex increasing function  
$h\colon \r \to [0,+\infty)$ satisfying the following conditions:
\begin{itemize}
\item[(i)]   $h(t)=0$ for $t\le t_0$;
\item[(ii)]   for $t\ge c_0$ we have $h(t)=t - t_1$ with $t_1=c_0 - h(c_0) \in (t_0,c_0)$;
\item[(iii)]  for $t_0\le t\le c_0$ we have $t-t_1 \le h(t) \le t - t_0$; 
\item[(iv)]  for all $t\in\r$ we have that $0\le \dot h(t) \le 1$ and $2t\ddot h(t) + \dot h(t) < \lambda$.
\end{itemize}
The construction of such function is entirely elementary (cf.\ \cite[pp.\ 98-99]{Forstneric2017book}; 
its graph is shown on \cite[Fig.\ 3.4, p.\ 99]{Forstneric2017book}).      
Let $\tau\colon\r^3\to\r$ be given by
\begin{equation}
\label{eqn:tau}
    \tau(\bx) = - h(a_1 x_1^2) + a_2 x_2^2 + a_3 x_3^2 + \eta(\bx).
\end{equation}
Setting $t=a_1 x_1^2$, a calculation shows that on the set $P_{c_0} \subset \{\eta=0\}$  (\ref{eq:P0}) we have
\[
	- \frac{\partial ^2\tau(\bx)}{\partial x_1^2} = 2a_1 \left( 2t\ddot h(t) + \dot h(t) \right) 
	< 2a_2 = \frac{\partial^2 \tau(\bx)}{\partial x_2^2},
\]
where the inequality holds by property (iv) of $h$ (recall that $\lambda=a_2/a_1$). 
This shows that $\tau$ is minimal strongly plurisubharmonic on $P_{c_0}$.  The other properties of 
$\tau$ follow immediately from the properties of $h$.  
(Compare with the proof of \cite[Lemma 3.11.1, p.\ 98]{Forstneric2017book}.)  
Condition (c) shows that $\tau$ is minimal strongly plurisubharmonic 
also on the complement of $P_{c_0}$. Condition (d) obviously holds on $P_{c_0}$,
while on the complement of $P_{c_0}$ it follows from  (c) and the assumptions on $\rho$.
\end{proof}

Combining Proposition \ref{prop:lifting} and Lemma \ref{lem:passing-model}, we now prove 
the following lemma which provides the induction step in the proof of Theorem \ref{th:main1}.

%
%
%
%
\begin{lemma}\label{lem:lifting2}
Let $\rho$ be a minimal strongly plurisubharmonic function on a domain $D\subset\r^3$,
and let $a<b$ be real numbers such that the set
\begin{equation}\label{eq:Dab}
	D_{a,b}=\{\bx\in D: a<\rho(\bx) <b\}  
\end{equation}
is relatively compact in $D$. Given numbers $0<\eta<b-a$, $\epsilon>0$, $\delta>0$, a conformal minimal
immersion $F \colon M\to D$ such that $F(bM) \subset D_{a,b}$, a point $p_0\in \mathring M$, a number $d>0$,
and a compact set $K\subset \mathring M$, there exists a conformal minimal immersion $G\colon M \to D$
satisfying the following conditions: 
\begin{itemize}
\item[\rm (a)]   $G(bM) \subset D_{b-\eta,b}$ (equivalently, $b-\eta < \rho(G(\zeta)) <b$
for every $\zeta\in bM$);
\item[\rm (b)]   $\rho(G(\zeta))\ge \rho(F(\zeta)) -\delta$ for every $\zeta\in M$;
\item[\rm (c)]   $\|G-F\|_{1,K}<\epsilon$;
\item[\rm (d)]   $\dist_{G}(p_0,bM) > d$; 
\item[\rm (e)]   $\Flux(G)=\Flux(F)$.
\end{itemize}
\end{lemma}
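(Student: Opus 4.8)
The plan is to combine the boundary-lifting Proposition \ref{prop:lifting} with the critical-point avoidance afforded by Lemma \ref{lem:passing-model}, iterating finitely many times to raise the boundary level from just above $a$ all the way into $D_{b-\eta,b}$, while interspersing applications of the metric-inflation lemmas \cite[Lemmas 4.1 and 4.2]{AlarconDrinovecForstnericLopez2015} to force (d), and keeping $\Cscr^1$-control on $K$ and a global lower bound on $\rho$ to get (b), (c), (e). First I would fix, by Lemma \ref{lem:nice}, a perturbation of $\rho$ (still called $\rho$) with the same critical locus $P$ and with nice critical points; since $\rho$ is minimal strongly plurisubharmonic, every critical point has index $0$ or $1$ (Remark \ref{rem:homotopy}). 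Choose regular values $a=c_0<c_1<\cdots<c_N=b$ of $\rho$, close enough together that each slab $\{c_{i-1}\le\rho\le c_i\}\cap D$ contains at most one critical point and has diameter controlled; also fix a regular value $c'\in(b-\eta,b)$ so that the final slab lies inside $D_{b-\eta,b}$.

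The core is a single induction step: given a conformal minimal immersion $F_i$ with $F_i(bM)\subset\{c_{i-1}<\rho<c_i\}$, produce $F_{i+1}$ with $F_{i+1}(bM)\subset\{c_i<\rho<c_{i+1}\}$, satisfying $\rho(F_{i+1}(\zeta))\ge\rho(F_i(\zeta))-\delta_i$ on $M$, $\Cscr^1$-close to $F_i$ on $K$, with the same flux, and with an enlarged intrinsic boundary distance. If the slab $\{c_i\le\rho\le c_{i+1}\}$ contains no critical point, then $\overline{\{c_i-\kappa\le\rho\le c_{i+1}\}}\subset D\setminus P$ for small $\kappa>0$, so Proposition \ref{prop:lifting} applies directly: take $L$ to be this compact set, let $\epsilon\colon bM\to[0,\epsilon_0]$ be chosen (after finitely many repeated applications of Proposition \ref{prop:lifting}, each raising the level by the fixed increment $\lesssim\epsilon_0$) to push $F_i(bM)$ above $c_i$ but not above $c_{i+1}$ — the key point being that conditions (1) and (2) of Proposition \ref{prop:lifting} give precisely ``the boundary rises by a prescribed amount while $\rho$ nowhere drops by more than $\delta$''. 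If instead the slab contains the unique critical point $\bx_0$, necessarily of index $1$, I would first use the local model of Lemma \ref{lem:passing-model}: writing $\rho$ near $\bx_0$ in the normal form \eqref{eq:rho}, the lemma furnishes $\tau$ on $D\cap\{\rho<3c_0^{\mathrm{loc}}\}$ with $\{\rho\le -c_0^{\mathrm{loc}}\}\cup E\subset\{\tau\le 0\}\subset\{\rho\le -t_0\}\cup E$, $\{\rho\le c_0^{\mathrm{loc}}\}\subset\{\tau\le 2c_0^{\mathrm{loc}}\}$, $\tau=\rho+t_1$ off a small box, and no critical values of $\tau$ in $(0,2c_0^{\mathrm{loc}}]$. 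The point is that the sublevel sets of $\tau$ below the critical level ``fill in'' the stable arc $E$, so the set $\{0<\tau<2c_0^{\mathrm{loc}}\}$ is free of critical points of $\tau$; one then applies Proposition \ref{prop:lifting} with $\rho$ replaced by $\tau$ to lift $F_i(bM)$ (which can be taken inside $\{0<\tau<2c_0^{\mathrm{loc}}\}$, since it starts just below the $\rho$-critical level and $\tau\ge\rho$ there away from $E$) up past the critical level, after which $\tau=\rho+t_1$ again and one reverts to $\rho$; because $\tau$ and $\rho$ differ by a constant outside the small box, the estimate (b) in terms of $\rho$ is preserved up to a controlled loss. The subtlety is that $F_i(bM)$ must be arranged to stay away from the ``bad'' sublevel sets near $\bx_0$ — this is where one uses that the $\rho$-value of $F_i(bM)$ is trapped in a thin slab just below $\rho(\bx_0)$ and that the box $P_{c_0}$ in \eqref{eq:P0} is as small as we like.

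After the lifting step (with $\rho$ or with $\tau$), I interleave the completeness construction: apply \cite[Lemmas 4.1 and 4.2]{AlarconDrinovecForstnericLopez2015} to the current immersion to increase $\dist_{G}(p_0,bM)$ by a prescribed amount while staying $\Cscr^0$-close and keeping the flux; the $\Cscr^0$-smallness guarantees that the boundary stays in the same (open) slab, so the lifting scheme can continue. Summing the geometric series of the $\delta_i$'s and $\epsilon_i$'s to be $<\delta$ and $<\epsilon$ respectively, and running the induction from $i=0$ (where $F_0=F$ already has $F(bM)\subset D_{a,b}$) up to the last slab, I land at a conformal minimal immersion $G$ with $G(bM)\subset\{c'<\rho<b\}\subset D_{b-\eta,b}$, $\rho(G)\ge\rho(F)-\delta$ on $M$ (from telescoping (2)), $\|G-F\|_{1,K}<\epsilon$, $\dist_G(p_0,bM)>d$, and $\Flux(G)=\Flux(F)$. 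The main obstacle I expect is the bookkeeping at the index-$1$ critical points: one must verify that the boundary $F_i(bM)$, positioned in a thin $\rho$-slab straddling (or just below) $\rho(\bx_0)$, can legitimately be fed into Proposition \ref{prop:lifting} for the auxiliary function $\tau$ — i.e. that its image lies in a compact subset of $(D\cap\{\tau<3c_0^{\mathrm{loc}}\})\setminus(\text{crit }\tau)$ — and that after passing the critical level one re-enters the regime $\tau=\rho+\text{const}$ with the lower bound (b) intact; all of this is the minimal-surface analogue of the Morse-theoretic step in \cite[Section 3.10]{Forstneric2011book}, and the remaining estimates are routine.
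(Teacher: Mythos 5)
Your overall scheme is the same as the paper's: finitely many applications of Proposition \ref{prop:lifting} to raise the boundary level, the auxiliary function $\tau$ from Lemma \ref{lem:passing-model} to cross each (nice, index-one) critical level, and \cite[Lemma 4.1]{AlarconDrinovecForstnericLopez2015} to secure condition (d) by a $\Cscr^0$-small deformation (whether you interleave this at every slab or, as the paper does, apply it once at the end is immaterial), with (b), (c), (e) obtained by summing small losses over the finitely many steps.

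There is, however, one step where your justification does not work, and it is exactly the step you flag as the main obstacle. To feed the boundary into Proposition \ref{prop:lifting} for $\tau$ and carry every boundary point past the critical level, you need the image of $bM$ to stay in a compact set disjoint from the critical locus of $\tau$, which in the slab $\{-c_0<\rho-c_1<2c_0\}$ is a segment of the stable arc $E$ at the level $\tau=0$. Your two proposed mechanisms do not achieve this: the inequality $\tau\ge\rho$ gives nothing when $\rho-c_1<0$ (so it does not place the boundary in $\{\tau>0\}$), and shrinking the box $P_{c_0}$ does not help either, because for every choice of $c_0$ the arc $E$ still passes through all $\rho$-levels immediately below the critical value and still carries the degenerate critical set of $\tau$; a boundary point landing on or arbitrarily near $E$ therefore remains an obstruction no matter how thin the slab or how small $P_{c_0}$ is. The missing ingredient, which the paper supplies, is a general-position perturbation: $F_i(bM)$ is a union of curves and $E$ is an arc in $\r^3$, so an arbitrarily small deformation of the immersion makes $F_i(bM)\cap E=\emptyset$, and by compactness the boundary image then has positive distance from $E$. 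After that, property (a) of Lemma \ref{lem:passing-model} (namely $\{\tau\le 0\}\subset\{\rho\le c_1-t_0\}\cup E$), together with the fact that off $E$ the function $\tau$ has no critical points in the slab and no critical values in $(0,2c_0]$, lets Proposition \ref{prop:lifting} applied with $\tau$ lift the entire boundary above the critical level, where $\tau=\rho+t_1$ again and one reverts to $\rho$. With this correction your argument matches the paper's proof.
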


\begin{proof}
If the domain $D_{a,b}$ (\ref{eq:Dab}) does not contain any critical points of $\rho$, then a finite number of 
applications of  Proposition \ref{prop:lifting} furnishes a conformal minimal immersion $G\colon M \to D$ satisfying 
all conditions except (d); this last condition can be achieved by an arbitrarily 
$\Cscr^0$ small deformation of $G$, using \cite[Lemma 4.1]{AlarconDrinovecForstnericLopez2015}. 
(The cited lemma allows one to increase the interior boundary distance of a conformal minimal immersion
by an arbitrarily big amount, while staying arbitrarily $\Cscr^0$-close to the given map.)

Assume now that $\bx_1,\ldots,\bx_m$ are the (nice) critical points of $\rho$ in $D_{a,b}$ (\ref{eq:Dab}).
We may assume that the numbers $c_j=\rho(\bx_j)$ are distinct, and we 
enumerate the points so that $a< c_1 < c_2<\cdots <c_m<b$. We may also assume that $\min_{bM} \rho\circ F \le c_1$,
since otherwise $a$ may be replaced by a constant satisfying $c_1 < a <\min_{bM} \rho\circ F$.

Pick $c_0>0$ such that the conclusion of Lemma \ref{lem:passing-model}
applies to the critical point $\bx_1$ of $\rho$ and the constant $c_0$.  
Applying Proposition \ref{prop:lifting} finitely many times, we can replace $F$ by a 
conformal minimal immersion $F_1\colon M\to D$ such that $F_1(bM) \subset D_{c_1-c_0,b}$
and $F_1$ satisfies conditions (b), (c) and (e) in Lemma \ref{lem:lifting2}
(with $F_1$ in place of $G$, and for some new constants $\epsilon_1$ and $\delta_1$
in place of $\epsilon$ and $\delta$). By general position, we can assume that $F_1(bM)$ avoids the local 
stable manifold $E$  (see (\ref{eq:E})) of the point $\bx_1$. 
Let $\tau$ be the function furnished by Lemma \ref{lem:passing-model}
(for the point $\bx_1$ and the constant $c_0$).
Applying Proposition \ref{prop:lifting} with the function $\tau$ finitely many times, we can lift
the boundary $F_1(bM)$ above the level $c_1=\rho(\bx_1)$ and thus obtain a new conformal minimal
immersion $G_1\colon M\to D$ satisfying $G_1(bM) \subset D_{c_1,b}$. 
As before, $G_1$ is chosen to satisfy conditions (b), (c) and (e) in Lemma  \ref{lem:lifting2}, with $G_1$
in place of $G$ and $F_1$ in place of $F$ (and for some new constants $\epsilon_2>0,\delta_2>0$). 

Now, we repeat the same procedure, first using Proposition \ref{prop:lifting} to push $G_1(bM)$ 
close to the level $\rho=c_2$, and subsequently lifting the boundary across 
$\rho=c_2$ by using Lemma \ref{lem:passing-model}. This furnishes a conformal 
minimal immersion $G_2\colon M\to D$ with $G_2(bM)\subset D_{c_2,b}$. 

In finitely many steps of this kind we find a conformal minimal immersion $G\colon M\to D$
satisfying $G(bM)\subset D_{b-\eta,b}$ (condition (a)) and condition (e). 
Since the number of steps depends only  on the geometry of $\rho$, 
we can fulfil conditions (b) and (c) by choosing the corresponding numbers $\epsilon_j>0$ 
and $\delta_j>0$ sufficiently small  at  every step. Finally, condition (d) is achieved as in the special
case by appealing to  \cite[Lemma 4.1]{AlarconDrinovecForstnericLopez2015}.
\end{proof}

%
%
%
%
\begin{proof}[Proof of Theorem \ref{th:main1}]
Let $F_0\colon M\to D$ be a conformal minimal immersion and $K$ be
a compact set in $\mathring M$. Given $\epsilon>0$, we shall find a complete proper conformal minimal
immersion $F\colon \mathring M\to D$ satisfying 
$\|F-F_0\|_{0,K}= \sup_{\zeta\in K}\|F(\zeta)-F_0(\zeta)\|<\epsilon$.  
Such $F$ will be found as the limit $F=\lim_{j\to\infty} F_j$ of a sequence of conformal
minimal immersions $F_j\colon M\to D$ that will be constructed by an inductive application
of Lemma \ref{lem:lifting2}.

Choose a minimal strongly plurisubharmonic Morse exhaustion function $\rho\colon D\to\r$.
Let $P=\{\bx_1,\bx_2,\ldots\}\subset D$ be the (discrete) critical locus of $\rho$, where the 
points $\bx_j$ are enumerated so that $\rho(\bx_1)<\rho(\bx_2)<\cdots$.
By Lemma \ref{lem:nice}, we may assume that every $\bx_j$ is a nice critical point of $\rho$.
Pick increasing sequences  $a_1<a_2<a_3\ldots$ and $d_1<d_2<d_3\ldots$ such that 
$\sup_M \rho\circ F_0 < a_1$, $\lim_{j\to\infty} a_j=+\infty$, and 
$\lim_{j\to\infty} d_j=+\infty$. Also, choose a decreasing sequence $\delta_j>0$ with 
$\delta=\sum_{j=1}^\infty \delta_j <\infty$. Fix a point $p_0\in \mathring K$.
We shall construct a sequence of smooth conformal minimal immersions
$F_j\colon M\to D$, an increasing sequence of compacts 
$K=K_0\subset K_1\subset  \cdots\subset \bigcup_{j=1}^\infty K_j=\mathring M$,
and  a decreasing sequence of positive numbers $\epsilon_j>0$ such that the following 
conditions hold for every $j=1,2,\ldots$:
\begin{itemize}
\item[\rm (i$_j$)]     $a_j < \rho\circ F_{j} < a_{j+1}$ on $M\setminus K_j$;  
\item[\rm (ii$_j$)]   $\rho\circ F_j > \rho\circ F_{j-1}-\delta_j$ on $M$;           
\item[\rm (iii$_j$)]   $\|F_{j}-F_{j-1}\|_{1,K_{j-1}} <\epsilon_j$;                        
\item[\rm (iv$_j$)]   $\dist_{F_j}(p_0,M\setminus K_j) > d_j$;                         
\item[\rm (v$_j$)]    $\Flux(F_j)=\Flux(F_{j-1})$;                                              
\item[\rm (vi$_j$)]   
$\epsilon_{j} < 2^{-1} \min\{\epsilon_{j-1}, \dist(F_{j-1}(M),bD), \inf_{\zeta\in K_{j-1}}\|dF_{j-1}(\zeta)\|  \}$.                                                     														                   
\end{itemize}
To begin the induction, set $\epsilon_0=\epsilon/2$ and $K=K_0$.
Assume inductively that, for some $j\in \n$, we have  found maps $F_0,\ldots, F_{j-1}$,
numbers $\epsilon_0,\ldots,\epsilon_{j-1}$, and compact sets $K_0,\ldots, K_{j-1}$
such that the above properties hold. Pick a number $\epsilon_j >0$ satisfying
condition (vi$_j$). Applying Lemma \ref{lem:lifting2} with the data $(F_{j-1}, K_{j-1}, \epsilon_j, d_j)$ 
furnishes a conformal minimal immersion $F_j\colon M\to D$ satisfying condition (i$_j$) on the boundary $bM$, 
conditions (ii$_j$), (iii$_j$), (v$_j$), and such that $\dist_{F_j}(p_0,bM) > d_j$.
Next, pick a compact set $K_j\subset \mathring M$ such that $K_{j-1}\subset \mathring K_j$
and conditions (i$_j$) and (iv$_j$) hold. (It suffices to take $K_j$ big enough.) 
This completes the induction step.

Condition (vi$_j$) implies that  $\sum_{k=j+1}^\infty \epsilon_k < \epsilon_{j}$ for every $j=0,1,\ldots$;
in particular, $\sum_{k=0}^\infty \epsilon_k < 2\epsilon_0=\epsilon$.
Condition (iii$_j$) ensures that the sequence 
$F_j$ converges uniformly on compacts in $\bigcup_{j=1}^\infty K_j=\mathring M$
to a harmonic map $F = \lim_{j\to\infty} F_j \colon \mathring M\to \overline D$.
Conditions (iii$_j$) and (vi$_j$) show that for every $j=0,1,\ldots$ we have that
\begin{equation}\label{eq:estimateFj}
	\|F- F_j\|_{1,K_j} \le \sum_{k=j}^\infty  \|F_{k+1}-F_k\|_{1,K_j}  < 
	\sum_{k=j}^\infty \epsilon_{k+1} < 2\epsilon_{j+1} < \epsilon_j.
\end{equation}
In particular, $\|F-F_0\|_{0,K}<\epsilon$. The estimate \eqref{eq:estimateFj}, together with (vi$_{j+1}$), also shows that 
$F(K_j)\subset D$; since this holds for all $j$, we have $F(\mathring M)\subset D$. 
Since $2\epsilon_{j+1} < \inf_{\zeta\in K_{j}}\|dF_{j}(\zeta)\|$ by  (vi$_{j+1}$), it follows from (\ref{eq:estimateFj}) 
that $F$ is a conformal immersion on $K_j$. As this holds for all $j$, 
$F \colon \mathring M\to D$ is a conformal harmonic (hence minimal) immersion.
In view of (v), we have $\Flux(F)=\Flux(F_0)$ . Finally, conditions (i$_j$)--(iii$_j$) ensure that $F$ is proper into $D$, 
while conditions (iii$_j$) and (iv$_j$) show that $F$ is complete. 
\end{proof}

\begin{proof}[Proof of Theorem \ref{th:main1-bis}]
The proof is the same as that of Theorem \ref{th:main1} modulo the obvious modifications, replacing conditions pertaining
to the distance from $bD$ (see condition (vi$_j$) above) by the corresponding conditions pertaining to the distance from the
end of the domain $\Omega$ on which the function $\rho$ tends to $+\infty$.
\end{proof}

\begin{proof}[Proof of Theorem \ref{th:main2}]
Choose a minimal strongly  plurisubharmonic function $\rho$ on an open set  
$D'\supset \overline D$ such that $D=\{\bx \in D': \rho(\bx)<0\}$ and $d\rho\ne 0$ on $bD=\{\rho=0\}$.
Pick $\eta>0$ such that the set $\{\rho<\eta\}$ is relatively compact in $D'$ and 
$d\rho\ne 0$ on  the compact set 
\begin{equation}\label{eq:L}
	L=\{\bx \in D':  -\eta\le \rho(\bx)\le \eta\}.
\end{equation} 
Let $C_0>0$ be a constant satisfying the conclusion of Proposition  \ref{prop:lifting} for the data $(D',\rho,L)$.
In view of Theorem \ref{th:main1}, we may assume that the given 
conformal minimal immersion $F_0\colon M\to D$ satisfies 
\begin{equation}\label{eq:a0}
	a_0=a_0(F_0) :=\inf_{\zeta\in bM}\rho(F_0(\zeta))>-\eta.
\end{equation} 
(Equivalently, $F_0(bM)\subset D\cap \mathring L$.)
For every $j=0,1,2,\ldots$ we set  
\[
	a_j=2^{-j}a_0, \quad  \eta_j=a_{j+1}-a_{j} = 2^{-j-1} |a_0|.
\]
Pick an increasing sequence
$0<d_1<d_2<\cdots$ with  $\lim_{j\to\infty} d_j=+\infty$ and a decreasing sequence $\delta_j>0$ with 
$\delta=\sum_{j=1}^\infty \delta_j <\infty$.  
By following the proof of Theorem \ref{th:main1}, using also the estimate {\em (4)}
in Proposition \ref{prop:lifting} with the constant $C_0$ introduced above,
we find a sequence of conformal minimal immersions
$F_j\colon M\to D$ $(j=1,2,\ldots)$, an increasing sequence of compacts 
$K=K_0\subset K_1\subset  \cdots\subset \bigcup_{j=1}^\infty K_j=\mathring M$,
and  a decreasing sequence of numbers $\epsilon_j>0$ such that the following 
conditions hold for all $j=1,2,\ldots$:
\begin{itemize}
\item[\rm (i$_j$)]    $\rho\circ F_j>a_j$ on $M\setminus K_j$;
\item[\rm (ii$_j$)]   $\rho\circ F_j > \rho\circ F_{j-1}-\delta_j$ on $M$;
\item[\rm (iii$_j$)]  $\|F_{j}-F_{j-1}\|_{1,K_{j-1}} < \epsilon_j$;
\item[\rm (iv$_j$)]   $\dist_{F_j}(p_0,M\setminus K_j) > d_j$;
\item[\rm (v$_j$)]    $\Flux(F_j)=\Flux(F_{j-1})$; 
\item[\rm (vi$_j$)]   $\epsilon_{j} < 2^{-1} \min\{\epsilon_{j-1}, \dist(F_{j-1}(M),bD),   
\inf_{\zeta\in K_{j-1}}\|dF_{j-1}(\zeta)\| \}$;
\item[\rm (vii$_j$)]  $\|F_j-F_{j-1}\|_{0,M} \le C_0\sqrt{2\eta_j}= C_0\sqrt{2^{-j}}\sqrt{|a_0|}$. 
\end{itemize}
These properties correspond to those in the proof of Theorem \ref{th:main1},
except that condition (i$_j$) is adjusted to the present setting, and the additional condition  (vii$_j$)
follows from the estimate {\em (4)} in Proposition \ref{prop:lifting}.
(By \cite[Lemma 4.1]{AlarconDrinovecForstnericLopez2015}, condition (iv$_j$) can be achieved 
by a deformation which is arbitrarily small in the $\Cscr^0(M)$ norm, and the error made 
by this deformation is absorbed by the constant $C_0$ in (vii$_j$).) 

Set $C_1= C_0 \sum_{j=1}^\infty \sqrt{2^{-j}}$. Condition 
(vii$_j$) ensures that the sequence $F_j$ converges uniformly on $M$ to a continuous map
$F\colon M\to \overline D$ satisfying $\|F-F_{0}\|_{0,M} \le C_1\sqrt{|a_0|}$.
On the set $L$  \eqref{eq:L} the function $|\rho|$ is proportional to the distance from $bD$, so 
the number $|a_0|$, defined by (\ref{eq:a0}), is proportional to $\max_{\zeta\in bM} \dist(F_0(\zeta),bD)$.
This gives the estimate (\ref{eq:root}) in Theorem  \ref{th:main2}  for a suitable choice of 
the constant $C>0$ which depends only on the geometry of  $\rho$ in $L$. 
We can see as in the proof of  Theorem  \ref{th:main1} that $F|_{\mathring M}\colon \mathring M\to D$ 
is a proper complete conformal minimal immersion. 
\end{proof}

\begin{proof}[Proof of Corollary \ref{co:inf-top}] 
This follows from Lemma \ref{lem:lifting2} by a similar inductive procedure as those  in the proofs 
of Theorems \ref{th:main1} and  \ref{th:main1-bis}. In this case we use in addition
the Mergelyan approximation theorem for conformal minimal immersions 
(cf.\ \cite{AlarconForstnericLopez2016MZ, AlarconLopez2012JDG}) 
in order to add either a handle or an end to the surface at each step in the recursive construction. 
In this way, we may prescribe the topology of the limit surface. For the details of this construction,
we refer to the proof of Theorem 1.4 {\rm (b)}  and Corollary 1.5 {\rm (b)} 
in \cite{AlarconDrinovecForstnericLopez2015}.
\end{proof}

%
%

\begin{remark}\label{rem:generalizations}
The methods developed in \cite{AlarconDrinovecForstnericLopez2015} and in this paper 
allow us to generalize Theorems \ref{th:main1} and \ref{th:main2} to 
$(n-2)$-convex domains $D\subset \r^n$ for any $n>3$. We shall not state these generalizations,
but will give a brief sketch of proof. By definition, such a domain 
admits a smooth strongly $(n-2)$-plurisubharmonic exhaustion function
$\rho\colon D\to \r$ (see Definition  \ref{def:p-hull} and Proposition \ref{prop:p-convex}).
Furthermore, by convexifying in the normal direction, $\rho$ can be chosen such that the 
level sets $S_c=\{\rho=c\}$ for noncritical values of $\rho$ are strongly $(n-2)$-convex hypersurfaces, 
which means in particular that at every point $p_0 \in S_c$ there is a $2$-dimensional plane $L\subset T_{p_0} S_c$
on which $\Hess_\rho$ is strongly positive. (See Definition \ref{def:SMC}.)
By choosing suitably shaped small flat discs $\Delta_p\subset \r^n$ for points $p$ near $p_0$, 
lying in affine $2$-planes parallel to $L$, and solving the associated Riemann-Hilbert boundary value problem
(see \cite[Theorem 3.6]{AlarconDrinovecForstnericLopez2015}), one can lift a small part of the boundary 
of any conformal minimal disc $F\colon \cd\to D$ in a neighborhood of $p_0$ to a higher level set
of $\rho$ (see Proposition \ref{prop:lifting}). The rest of the proof goes through as before.
However, if the level sets of $\rho$ are merely $(n-1)$-convex (which is the same as mean-convex), 
this approach would require the existence of approximate solutions of the Riemann-Hilbert boundary 
value problem for nonflat conformal minimal discs (i.e., the analogue of 
\cite[Lemma 3.1]{AlarconDrinovecForstnericLopez2015} for $n>3$). 
We are unable to prove optimal results for $n>3$ at this time.

The corresponding optimal results in complex analysis, pertaining to the existence of
proper holomorphic maps from strongly pseudoconvex Stein domains to $q$-convex manifolds, 
were obtained in the papers \cite{DrinovecForstneric2007DMJ,DrinovecForstneric2010AJM}.
\qed\end{remark}


\section{Maximal minimally convex domains and a Maximum Principle at infinity}\label{sec:maximal}

This section is devoted to the proof of Theorem \ref{th:FTC}. 
The main ingredient is the maximum principle for minimal surfaces 
with finite total curvature in minimally convex domains in $\r^3$,
given by the following theorem.

\begin{theorem} \label{th:FTC-infinity} 
Assume that $S\subset \r^3$ is a complete, connected, immersed minimal surface with compact boundary $bS\neq\emptyset$ 
and finite total curvature. If  $D\subset\r^3$ is a smoothly bounded
minimally convex domain containing $S$, then 
$\dist(S,\r^3\setminus D)=\dist(bS,\r^3\setminus D)$.
\end{theorem}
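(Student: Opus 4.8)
The plan is to argue by contradiction: suppose that $\dist(S,\r^3\setminus D) < \dist(bS,\r^3\setminus D)$. Then there is a point $\bx_0\in S\setminus bS$ with $\dist(\bx_0,\r^3\setminus D)=\dist(S,\r^3\setminus D)=:c$, and a point $\by_0\in bD$ with $\|\bx_0-\by_0\|=c$. The idea is to translate $S$ in the direction $\bv=(\by_0-\bx_0)/c$ and apply the Kontinuit\"atssatz (Proposition \ref{prop:Kontinuitatssatz}). The obstruction to a direct application is that $S$ is noncompact, so the family $S_t = S+t\bv$, $t\in[0,c)$, does not consist of compact minimal surfaces. The key point — and what I expect to be the main obstacle — is to show that this family, or a suitable exhaustion of it by compact pieces, stays trapped in a fixed compact subset of $D$ near $\by_0$ until it touches $bD$, and that one genuinely reaches a contact at a level of $\rho$ strictly above the boundary level of $bS$; this is where finite total curvature is essential.

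Here is how I would control the noncompactness. A complete immersed minimal surface $S\subset\r^3$ of finite total curvature with compact boundary has finitely many ends, each of which is a graph of the form $x_3 = a\log r + b + O(1/r)$ (catenoidal ends) or $x_3 = b + O(1/r)$ (planar ends) over the exterior of a disc in some plane, with bounded Gauss map and curvature decaying like $r^{-4}$; in particular, outside a large ball $B_R$, $S$ is a union of finitely many such asymptotically flat graphical ends. Because $D$ is minimally convex, it admits a minimal strongly plurisubharmonic exhaustion function $\rho\colon D\to\r$, and the value $-\log\dist(\cdotp,\r^3\setminus D)$ is minimal plurisubharmonic near $bD$ (Theorem \ref{th:p-convex}(d)). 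I would first use this, via the maximum principle for minimal surfaces (Proposition \ref{prop:max-principle}(a) applied to large compact pieces $S\cap \overline B_R$, together with an examination of the behavior on the ends), to establish that the infimum $\dist(S,\r^3\setminus D)$ is actually attained at an interior point $\bx_0$ of $S$ and not merely approached along an end — here the asymptotic geometry of the ends keeps $\rho\circ S$ bounded away from the boundary level near infinity, because each end, being asymptotically a half-plane contained in $D$, has $\dist(\cdotp,\r^3\setminus D)$ bounded below along the end by a positive constant larger than what would be forced by a contact. Alternatively, and perhaps more cleanly, I would argue that if no interior contact occurred then $\dist(bS,\r^3\setminus D) = \dist(S,\r^3\setminus D)$ would hold outright, which is the desired conclusion.

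Assuming an interior contact point $\bx_0\in\mathring S$ with $\dist(\bx_0,\r^3\setminus D)=c<\dist(bS,\r^3\setminus D)$, I would then derive a contradiction by a continuity/translation argument. Fix a number $t_0$ with $c<t_0<\dist(bS,\r^3\setminus D)$, choose $R$ large enough that $\bx_0\in B_R$ and that $S$ is graphical and asymptotically flat outside $B_R$, and consider the compact minimal surfaces with boundary $M_t := (S\cap \overline B_{R'}) + t\bv$ for a slightly larger $R'$ (one must truncate $S$ along $S\cap bB_{R'}$, which for generic $R'$ is a finite union of smooth curves). Since $bS$ and $S\cap bB_{R'}$ both stay at distance $>t_0$ from $\r^3\setminus D$, the set $\bigcup_{t\in[0,t_0]} bM_t$ lies in a compact subset of $D$; by Proposition \ref{prop:Kontinuitatssatz}, $\bigcup_{t\in[0,t_0]} M_t$ is also relatively compact in $D$. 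But for $t$ close to $c$ the translated point $\bx_0+t\bv$ converges to $\by_0\in bD$ as $t\to c$, so $M_t$ cannot remain inside a fixed compact subset of $D$ — contradiction. This last step requires only that the truncation radius $R'$ be chosen so large that the contact genuinely happens in the truncated piece, which is fine since $\bx_0\in B_R\subset B_{R'}$; the asymptotically flat ends, which lie in $D$ with a definite margin, guarantee that the truncation does not spoil the hypothesis $bM_t\subset D$ for $t\le t_0$. Thus the assumed strict inequality is impossible, proving $\dist(S,\r^3\setminus D)=\dist(bS,\r^3\setminus D)$, which is Theorem \ref{th:FTC-infinity}.

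I expect the genuinely delicate point to be the interplay between the noncompactness of $S$ and the Kontinuit\"atssatz: one must verify carefully that the asymptotic structure of the finite-total-curvature ends keeps $\rho$ (equivalently, the distance to $\r^3\setminus D$) uniformly bounded below along the ends of all the translates $S_t$, $t\in[0,t_0]$, simultaneously, so that both the contact point is interior and the truncation is harmless. Everything else — the existence of the exhaustion function, the maximum principle, the foliated-contact geometry — is already packaged in the cited results.
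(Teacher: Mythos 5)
Your argument has the right general flavor (translation plus the Kontinuit\"atssatz plus the maximum principle), but it has a genuine gap at exactly the point the paper identifies as the main difficulty: the possible \emph{contact at infinity} along an end. You assert that each asymptotically flat (multi)graphical end of $S$ has $\dist(\cdotp,\r^3\setminus D)$ bounded below ``by a positive constant larger than what would be forced by a contact,'' and later that the ends ``lie in $D$ with a definite margin,'' so that the truncation circles $S\cap bB_{R'}$, translated by any $t\le t_0$, stay in $D$. Neither claim is justified, and neither follows from finite total curvature: the boundary $bD$ of a minimally convex domain may approach an end asymptotically, so that $\dist(\cdotp,\r^3\setminus D)$ restricted to an end decreases to the infimum $d=\dist(S,\r^3\setminus D)$ without attaining it. In that scenario your first step (existence of an interior contact point $\bx_0$) fails, your ``alternative'' remark (``if no interior contact occurred then the conclusion holds outright'') is circular, and your truncation argument breaks down because the hypothesis of Proposition \ref{prop:Kontinuitatssatz} is violated: the translated truncation circles are only known to be at distance $\ge d$ from $\r^3\setminus D$, not $>t_0$, so $\bigcup_{t\in[0,t_0]}bM_t$ need not lie in a compact subset of $D$. (Also, for the interior-contact case the intended conclusion is not a contradiction per se: the paper handles it via Proposition \ref{prop:max-principle}, which forces $S$ to be flat with a translate inside $bD$, and equality of the distances follows directly.)

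The paper's proof is organized around precisely this obstruction. It shows that the distance $d$ is attained at some point of $S$ (claim \eqref{eq:p0q0}) by assuming the contrary, in which case $d$ is achieved only asymptotically along some end $X(E)$; after translating by $d\,{\rm n}_E$ one has $\dist(E_d,D^c)=0$ while $\bigcup_{t\in[0,d]}E_t\subset D$. One cannot simply translate truncations of the end, for the reason above; instead, Lemma \ref{lem:williams} solves Dirichlet problems for the minimal surface equation on $n$-sheeted coverings $A^n_{R_0,R}$ of annuli (using Williams-type barriers, whose size depends only on $R_0,R_1,K$, and the Meeks--Rosenberg maximum principle at infinity) to produce compact minimal multigraphs $T_{R,\delta}$ whose inner boundary is the translated curve $bE_{d+\delta}$ and whose \emph{outer} boundary lies on the untranslated end $E_d\subset D$. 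All boundaries of the family $\{T_{R,\delta}\}_{R_1\le R\le R^*}$ then lie in a compact subset of $D$, the Kontinuit\"atssatz gives $T_{R,\delta}\subset D$, and letting $R\to\infty$ (part (iii) of the lemma) yields $E_{d+\delta}\subset\overline D$ for all small $\delta\ge 0$, contradicting $\dist(E_d,D^c)=0$. Some substitute for this barrier/interpolation construction (or another mechanism ruling out contact at infinity) is indispensable; without it your proposal does not prove the theorem.
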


The particular case of Theorem \ref{th:FTC-infinity} when $S$ is compact (with boundary) 
is already ensured by Proposition \ref{prop:max-principle}. The main difficulty in the general case 
(when the surface $S$ is not compact) is that one must deal with the contact at infinity, a rather delicate task.

Before proving Theorem \ref{th:FTC-infinity}, we show how it implies Theorem \ref{th:FTC} by a 
Kontinuit\"atssatz type argument (see Proposition \ref{prop:Kontinuitatssatz}).

\begin{proof}[Proof of Theorem \ref{th:FTC}, assuming Theorem \ref{th:FTC-infinity}] 
Assume that $D^c=\r^3\setminus D \neq\emptyset$ and let us prove that $S$ is a plane. 
Choose a relatively compact disc $\Omega\subset S$ and set 
$S'=S\setminus\Omega$. By Theorem \ref{th:FTC-infinity} we have that $\dist(S',D^c)=\dist(bS',D^c)$, and hence 
$\dist(S,D^c)=\dist(\overline\Omega,D^c)$. Thus, there exist points $\bx_0\in S$ and $\by_0\in bD$ 
such that $\|\bx_0-\by_0\|=\dist(S,D^c)$, and we infer from 
Proposition \ref{prop:max-principle} that $S$ is a plane.
Without loss of generality we may assume that $S=\{(x,y,z)\in\r^3\colon z=0\}$.
Set $W_+=\{(x,y,z)\in\r^3\colon z>0\}$. We claim that, if the set $W_+\setminus D$ is nonempty, 
then it is a halfspace. Indeed, assume that $W_+\setminus D\neq \emptyset$ and let us prove first that 
\begin{equation}\label{eq:c_+}
  d_+:=\dist(S,bD \cap W_+)=\dist(S,W_+\setminus D)>0.
\end{equation}
Consider the family of vertical negative half-catenoids
\[
	\Sigma_a=\{(x,y,z)\in\r^3 : x^2+y^2=a^2\cosh^2(z/a),\, z\leq 0\},\quad 0<a\leq 1.
\]
Let $A_+$ denote the cylinder
\[
A_+ :=\{(x,y,z)\in\r^3 :  x^2+y^2\le 2,\ 0\leq z\leq \tau_+\},
\]
where $\tau_+>0$ is chosen small enough such that $((0,0,\tau_+)+\Sigma_1)\cap \{z\geq 0\}\subset A_+\subset D$. 
The  Kontinuit\"atssatz for minimal surfaces (cf.\ Proposition \ref{prop:Kontinuitatssatz}) implies that 
\[ 
	\Sigma_a^+ := ((0,0,\tau_+)+\Sigma_a)\cap \{z\geq 0\}\subset D\quad \text{for all $0<a\leq 1$}. 
\]
Indeed, $\Sigma_a^+$ are minimal surfaces with boundaries in $A_+\cup S\subset D$, and 
$\Sigma_1^+ \subset A_+\subset D$.  It is easily seen that $\bigcup_{0<a\le 1} \Sigma_a^+$ contains 
the set $\{(x,y,z)\in\r^3 : x^2+y^2\ge 2,\  0\leq z< \tau_+\}$. 
Since $A_+\subset D$, we infer that $D$ contains the slab $\{0\le z< \tau_+\}$. This implies that 
$d_+\geq\tau_+>0$, thereby proving \eqref{eq:c_+}.

If there is a point $(x_0,y_0,d_+)\in bD$, then, arguing as in the proof of Proposition \ref{prop:max-principle} and using that $D$ is connected,
we easily infer that the plane $\Pi_+:=\{z=d_+\}$ lies in $bD\cap\{z>0\}=b(W_+\setminus D)$, and hence $W_+\setminus D$ is a halfspace.
Otherwise, $\Pi_+\subset D$ and we may reason as above (replacing $S$ by $\Pi_+$) to see that 
$\dist(\Pi_+,W_+\setminus D)>0$ in contradiction to \eqref{eq:c_+}. 

A symmetric argument guarantees that $\{(x,y,z)\in\r^3 : z<0\}\setminus D$ is either empty or a halfspace. 
This concludes the proof.
\end{proof}


The proof of Theorem \ref{th:FTC-infinity} also follows from a Kontinuit\"atssatz argument; however, 
the construction of a suitable family of minimal surfaces is much more delicate. The surfaces 
will be multigraphs, obtained as solutions of suitable Dirichlet problems 
for the minimal surface equation over finite coverings of annuli in $\r^2$; see Lemma \ref{lem:williams}.

Before going into the construction, we introduce some notation. 

\begin{definition}\label{def:arr}
For each pair of numbers $0\leq R_0<R\leq +\infty$ we set 
\[
	A_{R_0,R}:=\{(x,y)\in \r^2 : R_0< \|(x,y)\| < R\},\quad A_{R_0}=A_{R_0,+\infty}.
\]  
Endow $A_{0}=\r^2\setminus\{(0,0)\}$ with the Euclidean metric and denote by 
\[
	\pi_n : A^n_{0}\to A_0
\]
the $n$-sheeted isometric covering, $n\in \n$. We also set:
\begin{itemize}
\item $A^n_{R_0,R}:=\pi_n^{-1} (A_{R_0,R})$ for all $0\leq R_0<R<+\infty$, and $A^n_{R_0}=\pi_n^{-1} (A_{R_0})$;
\item $c_R:=\{(x,y)\in \r^2 : \|(x,y)\|=R\}$ and  $c^n_R:=\pi_n^{-1}(c_R)$, $R>0$.
\end{itemize}
Obviously, $bA^n_{R_0}=c^n_{R_0}$ and $bA^n_{R_0,R}=c^n_{R_0}\cup c^n_R$, $0<R_0<R<+\infty$, $n\in \n$. 
\end{definition}

A function $u\in \Cscr^2(A^n_{R_0,R})$ is said to {\em satisfy the minimal surface equation in $A^n_{R_0,R}$} if 
\[
{\rm div}\Big(\frac{\nabla u}{\sqrt{1+\|\nabla u\|^2}}\Big)=0\quad \text{in $A^n_{R_0,R}$};
\]
equivalently, if   $\{(p,u(p)) : p\in A^n_{R_0,R}\}$ is a minimal surface (a minimal multigraph). 

Given $\phi \in \Cscr^0(bA^n_{R_0,R})$, a function $u\in \Cscr^2(A^n_{R_0,R})\cap  \Cscr^0(\overline{A}^n_{R_0,R})$ 
is said to be a {\em solution of the Dirichlet problem for the minimal surface equation in $A^n_{R_0,R}$ with boundary 
data $\phi$} if $u$ satisfies the minimal surface equation in $A^n_{R_0,R}$  and the boundary condition $u|_{bA^n_{R_0,R}} =\phi$.

\begin{lemma}\label{lem:williams}
Let $0<R_0<R_1$, $K\in (0,1)$, and $n\in \n$. There exists a number $\epsilon>0$,  depending only on $R_0$, $R_1$, and $K$,  
such that the following holds. If $R\geq R_1$, $\delta\in [0,\epsilon]$,
\begin{enumerate}[{\rm (a)}]
\item $v\colon \overline{A}^n_{R_0}\to \r$ is a real analytic solution of the minimal surface equation in $A^n_{R_0}$,
\item   $\|\nabla v\|<K/2$ in $\overline{A}^n_{R_0}$, and
\item we set  $\phi_{R,\delta}\colon bA^n_{R_0,R}\to \r$, $\phi_{R,\delta}=v$ in $c^n_{R_0}$ and $\phi_{R,\delta} =v+\delta$ in  $c^n_{R}$,
\end{enumerate}
then  the Dirichlet problem for the minimal surface equation in $A^n_{R_0,R}$ with boundary data $\phi_{R,\delta}$ 
has a unique solution $u_{R,\delta}$. Furthermore, $u_{R,\delta}$ enjoys the following conditions:
\begin{enumerate}[{\rm (i)}]
\item $v\leq u_{R,\delta}\leq v+\delta$ on $ \overline{A}^n_{R_0,R}$;  
\item $u_{R,\delta}$ depends continuously on $(R,\delta)\in [R_1,+\infty) \times [0,\epsilon]$; 
\item  $\{u_{R,\delta}\}_{R>R_1}\to v$ as $R\to +\infty$ on compact subsets of $ \overline{A}^n_{R_0}$ for all $\delta \in [0,\epsilon]$.
\end{enumerate}
\end{lemma}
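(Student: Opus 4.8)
The plan is to read Lemma~\ref{lem:williams} as a Dirichlet problem for the minimal surface equation over the \emph{fixed} domain $A^n_{R_0,R}$, exploiting that on each boundary component the prescribed data extends to an honest minimal multigraph --- the graph of $v$ over $c^n_{R_0}$, and the vertical translate, the graph of $v+\delta$, over $c^n_R$. These provide barriers even though $A^n_{R_0,R}$ is concave along its inner boundary $c^n_{R_0}$, which is exactly where the classical solvability of the Dirichlet problem would fail for arbitrary boundary data; the covering plays no essential role, since the equation and the local estimates are unaffected by it and the maximum principles used are global on $A^n_{R_0,R}$. Two facts are immediate for any solution $u$, whenever one exists: as $v$ and $v+\delta$ are solutions and $v\le\phi_{R,\delta}\le v+\delta$ on $bA^n_{R_0,R}$, the comparison principle for the minimal surface equation gives $v\le u\le v+\delta$ on $\overline A^n_{R_0,R}$, which is conclusion (i); and by the same comparison principle two solutions with the same continuous boundary data coincide, which is uniqueness.

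The heart of the matter is a gradient bound uniform in $R\ge R_1$ and $\delta\le\epsilon$. First I would prove a boundary gradient estimate. In a collar $\mathcal C_{R_0}\subset A^n_{R_0,R_1}$ of $c^n_{R_0}$ of fixed width $\ell=\ell(R_0,R_1)>0$, set $\overline u=v+\delta\psi_0$, where $\psi_0$ is a fixed nonnegative function vanishing on $c^n_{R_0}$, at least $1$ on the outer edge of $\mathcal C_{R_0}$, of the barrier form $C\bigl(1-e^{-\mu\dist(\cdot,c^n_{R_0})}\bigr)$ with $\mu,C$ large depending only on $R_0,\ell,K$, chosen so that $\overline u$ is a supersolution of the minimal surface equation for all competitors of gradient below $K$; since $\overline u\ge u$ on $\partial\mathcal C_{R_0}$ by (i), comparison yields $v\le u\le v+\delta\psi_0$ on $\mathcal C_{R_0}$, and as all three agree on $c^n_{R_0}$ one gets $0\le\partial_\nu(u-v)\le C\mu\,\delta$ there, hence $|\nabla u|\le K/2+C_*\delta$ on $c^n_{R_0}$. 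The analogous construction at $c^n_R$, based on $v+\delta$ and on a collar of fixed width whose geometry (distance function and its Hessian) is controlled uniformly for $R\ge R_1$, gives $|\nabla u|\le K/2+C_*\delta$ on $c^n_R$ as well, with $C_*$ depending only on $R_0,R_1,K$. Next, for a minimal $2$-graph the vertical component $1/\sqrt{1+|\nabla u|^2}$ of the Gauss map is superharmonic on the graph (its Laplace--Beltrami equals $-|A|^2/\sqrt{1+|\nabla u|^2}\le 0$), so it attains its minimum on the boundary of the graph; equivalently $\sup_{\overline A^n_{R_0,R}}|\nabla u|=\sup_{bA^n_{R_0,R}}|\nabla u|\le K/2+C_*\delta$. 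Choosing $\epsilon\le K/(4C_*)$, small enough also that the barriers above are admissible, forces $\|\nabla u\|_\infty\le 3K/4<1$ for every solution with $\delta\le\epsilon$, so the minimal surface equation is uniformly elliptic along the entire family (the usual bootstrap: at each stage of the argument below the current solution is smooth up to the boundary, being a uniformly elliptic solution with smooth data, and the estimate is applied to it).

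With this bound in hand, existence follows by the method of continuity in $\delta\in[0,\epsilon]$: the solution at $\delta=0$ is $v$ restricted to $\overline A^n_{R_0,R}$; openness is the implicit function theorem for the uniformly elliptic, smooth-coefficient linearized operator on $C^{2,\alpha}$ with the given boundary conditions; and closedness follows from the uniform gradient bound via the standard regularity chain for the minimal surface equation (De Giorgi--Nash gives uniform $C^{1,\alpha}$ bounds for the first derivatives, then Schauder gives uniform $C^{2,\alpha}$ bounds, then Arzel\`a--Ascoli passes to the limit). This produces $u_{R,\delta}$ for all $R\ge R_1$, $\delta\in[0,\epsilon]$, which then automatically satisfies (i). For (ii): given $(R_k,\delta_k)\to(R_\infty,\delta_\infty)$ with $R_\infty<+\infty$, the uniform interior and boundary Schauder estimates make $\{u_{R_k,\delta_k}\}$ precompact in $C^2_{\mathrm{loc}}$ up to the smooth, convergent boundary; every limit solves the limiting Dirichlet problem, hence equals $u_{R_\infty,\delta_\infty}$ by uniqueness, which forces full convergence. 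For (iii): fixing $\delta$ and letting $R\to+\infty$, the same precompactness shows every limit of $u_{R,\delta}$ is a solution of the minimal surface equation on the two-dimensional domain $A^n_{R_0}$ taking the value $v$ on $c^n_{R_0}$ and lying between $v$ and $v+\delta$; writing it as $v+h$, the function $h$ is a bounded solution of a uniformly elliptic divergence-form equation on $A^n_{R_0}$ vanishing on $c^n_{R_0}$. Since $A^n_{R_0}$ is a finite cover of a planar exterior domain, hence parabolic (recurrent), its ideal boundary at infinity carries no harmonic measure for such an operator, so $h\equiv 0$ and the limit is $v$; this gives (iii), the convergence being uniform on compacta.

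I expect the main obstacle to be the boundary gradient estimate at the concave inner boundary $c^n_{R_0}$, made uniform in $R\in[R_1,+\infty)$: the feature that rescues it is that the data there is the trace of the honest minimal graph of $v$ with $\|\nabla v\|<K/2$, so $v$ plus a small multiple of a fixed supersolution is an admissible upper barrier --- a device unavailable for generic data on a concave boundary. A secondary delicate point is the $R\to+\infty$ asymptotics in (iii), which rests on the parabolicity of the planar exterior domain $A^n_{R_0}$ and of its finite covers (which remain conformally planar exterior domains).
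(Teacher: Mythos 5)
Your overall architecture (comparison with $v$ and $v+\delta$ for (i) and for uniqueness, compactness plus uniqueness for (ii), a parabolicity argument for (iii)) is sensible, but the step on which everything rests --- the boundary gradient estimate at the concave inner circle $c^n_{R_0}$, \emph{uniform in $v$} --- has a genuine gap. You take $\overline u=v+\delta\psi_0$ with $\psi_0=C\bigl(1-e^{-\mu\,\dist(\cdot,c^n_{R_0})}\bigr)$ and assert that $\mu,C$ can be chosen depending only on $R_0,\ell,K$ so that $\overline u$ is a supersolution of the minimal surface equation. Writing the operator in non-divergence form and using that $v$ is a solution, one gets $a^{ij}(\nabla v+\delta\nabla\psi_0)\,(v+\delta\psi_0)_{ij}=\bigl[a^{ij}(\nabla v+\delta\nabla\psi_0)-a^{ij}(\nabla v)\bigr]v_{ij}+\delta\, a^{ij}(\nabla v+\delta\nabla\psi_0)(\psi_0)_{ij}$; the favorable second term is of size $\delta C\mu e^{-\mu d}\bigl(-\lambda\mu+O(1/R_0)\bigr)$, while the first is of size $\delta C\mu e^{-\mu d}\,O\bigl(|D^2v|\bigr)$, so the sign forces $\mu\gtrsim\sup_{\text{collar}}|D^2v|$. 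The hypotheses give only $\|\nabla v\\|<K/2$ on $\overline{A}^n_{R_0}$; they give no bound on $D^2v$ near $c^n_{R_0}$ (interior estimates degenerate there, and $v$ need not be $C^2$ up to the inner circle). Hence your $C_*$, and with it the admissible $\epsilon$, would depend on $v$, contradicting the requirement that $\epsilon$ depend only on $R_0,R_1,K$; and if $D^2v$ is unbounded near $c^n_{R_0}$, which the hypotheses allow, the barrier does not exist at all. The phrase ``supersolution for all competitors of gradient below $K$'' does not help: the comparison you invoke needs the genuine supersolution property, and that is exactly where $D^2v$ enters. A connected problem: your method-of-continuity bootstrap assumes the solutions are ``uniformly elliptic solutions with smooth data'', but $\phi_{R,\delta}$ on $c^n_{R_0}$ is only as regular as $v|_{c^n_{R_0}}$, i.e.\ Lipschitz, so $C^{2,\alpha}$-up-to-the-boundary theory is unavailable; continuity of the solution at the concave inner circle with merely Lipschitz data is precisely the delicate point.

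This is also where your route diverges from the paper's, which avoids any use of $D^2v$: it runs Perron's method, so only barriers ensuring attainment of the boundary values are needed, and at $p_0\in c^n_{R_0}$ the upper barrier is $\min\{f\circ\pi_n,\,v+\epsilon_1\}$, where $f$ solves the Dirichlet problem over a small disc satisfying an exterior sphere condition of radius $R_0$ with data lying on the cone of slope $K$ through $(p_0,v(p_0))$ --- solvable by Williams \cite{Williams1984JRAM} --- and the comparison of $v$ with that cone uses only $\|\nabla v\|<K/2<K<1$; this is why $\epsilon$ depends only on $R_0,R_1,K$. (At $c^n_R$ the paper uses a tilted linear lower barrier; the outer boundary is convex and unproblematic, as you note.) To repair your argument you would have to replace the exponential barrier at $c^n_{R_0}$ by such a cone/Williams-type barrier, or otherwise derive a boundary modulus or gradient bound there using only the Lipschitz bound on $v$; your steps (ii), (iii) and the Gauss-map interior gradient argument are fine in outline, but they all hinge on that missing uniform boundary estimate.
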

The number $\epsilon>0$ in the lemma will only depend on the existence   of  suitable barrier functions $\nu_{p_0}$ at boundary points $p_0\in bA^n_{R_0,R}$ adapted to our problem. The construction of these barrier functions in turn only depends on 
the constants $R_0$, $R_1$, and  $K$.

\begin{proof}
For the existence part of the lemma, we use Perron's method for the minimal surface equation; see for instance \cite{GilbargTrudingerbook,Nitschebook}. 

Take arbitrary numbers $R> R_0>0$ and $\delta\geq 0$. If $w\in \Cscr^0(\overline{A}^n_{R_0,R})$, 
$D$ is a convex disc in $\overline{A}^n_{R_0,R}$, 
and  $w_D$ is  the solution of the minimal surface equation in $D$ which
equals $w$ on $bD$ (such exists by classical Rado's theorem), we denote by $\wh w_D$  the continuous function in 
$ \overline{A}^n_{R_0,R}$ which coincides with $w$ in  $\overline{A}^n_{R_0,R}\setminus D$ and with $w_D$  in $D$. 

By definition, a function  $w\in \Cscr^0(\overline{A}^n_{R_0,R})$ is said to be a  sub-solution of the Dirichlet problem 
for the minimal surface equation in $\overline{A}^n_{R_0,R}$, defined by  $\phi_{R,\delta}$ given in {\rm (c)}, if
$w\leq \phi_{R,\delta}$ in $bA^n_{R_0,R}$ and $w\leq \wh w_D$ in $D$ (hence in $\overline{A}^n_{R_0,R}$) 
for all discs $D$ as above. We denote by $\mathcal{F}^-_{R,\delta}$ the family of all such  sub-solutions. 
Likewise,  $w$ is said to be a  super-solution for this problem  if
$w\geq \phi_{R,\delta}$ in $bA^n_{R_0,R}$ and $w\geq \wh w_D$ in $D$ (hence in $\overline{A}^n_{R_0,R}$) for all discs  
$D$ in $\overline{A}^n_{R_0,R}$. The corresponding space of super-solutions will be denoted by $\mathcal{F}^+_{R,\delta}$. 
Note that  $v|_{\overline{A}^n_{R_0,R}}\in \mathcal{F}^-_{R,\delta}$ and  
$(v+\delta)|_{\overline{A}^n_{R_0,R}}\in \mathcal{F}^+_{R,\delta}$  for all $R>R_0$ and $\delta>0$, 
where $v$ is the function given in item {\rm (a)} in the statement of the lemma; hence these are nonempty families. 
If $w_1$ is a sub-solution and $w_2$ is a super-solution, then the maximum principle ensures that $w_1\leq w_2$. 
On the other hand, if $w_1$ and $w_2$ are sub-solutions (respectively, super-solutions), then $\max\{w_1,w_2\}$ 
(respectively, $\min\{w_1,w_2\}$) also is. 

We define
\begin{equation}\label{eq:uRd}
u_{R,\delta}\colon A^n_{R_0,R}\to \r, \quad u_{R,\delta}(p)=\sup_{w\in \mathcal{F}^-_{R,\delta}} w(p). 
\end{equation}
It is well known that $u_{R,\delta}$ is a solution of the minimal surface equation in $A^n_{R_0,R}$. Further, 
\begin{equation}\label{eq:abovebelow}
w_1\leq u_{R,\delta}\leq w_2\quad \text{for any $w_1\in\mathcal{F}^-_{R,\delta}$ and $w_2\in\mathcal{F}^+_{R,\delta}$.}
\end{equation}
In particular, 
\begin{equation} \label{eq:comparar}
v\leq u_{R,\delta} \leq v+\delta \quad \text{in $A^n_{R_0,R}$ for all $R>R_0$ and all $\delta\geq 0$.}
\end{equation}

\begin{claim}\label{cl:barrier}
Given numbers $R_1>R_0>0$ and $K \in (0,1)$, there exists $\epsilon>0$, depending only on $R_0$, $R_1$, and $K$, 
such that the following holds. 
If $R\geq R_1$ and $0\leq \delta\leq \epsilon$,  then the function $u_{R,\delta}$ given by \eqref{eq:uRd} is a solution 
to the Dirichlet problem for the minimal surface equation with boundary data $\phi_{R,\delta}$ in $A^n_{R_0,R}$; that is to say,
\[
\text{$\lim_{p\to p_0} u_{R,\delta}(p)=\phi_{R,\delta}(p_0)$\quad  for all $p_0\in bA^n_{R_0,R}$.}
\]
\end{claim}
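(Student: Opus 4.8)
\emph{Plan.} The claim is exactly the assertion that the Perron solution $u_{R,\delta}$ defined by \eqref{eq:uRd} is continuous up to $bA^n_{R_0,R}$, so the plan is to run Perron's method to completion: show that every boundary point $p_0\in bA^n_{R_0,R}$ is regular for the minimal surface equation, i.e.\ admits an upper and a lower barrier, and then conclude by the standard theory (\cite{GilbargTrudingerbook,Nitschebook}). Two reductions make this tractable. First, by \eqref{eq:comparar} one has $v\le u_{R,\delta}\le v+\delta$ on $\overline{A}^n_{R_0,R}$, and, since $(v+\delta)|_{\overline{A}^n_{R_0,R}}\in\mathcal{F}^+_{R,\delta}$, every $w\in\mathcal{F}^-_{R,\delta}$ satisfies $w\le v+\delta$; hence an upper barrier at $p_0$ need only be a supersolution of the minimal surface equation that dominates $v$ on the boundary arc through $p_0$ and dominates $v+\delta$ on the remaining (interior) part of the boundary of a small coordinate neighbourhood of $p_0$, and a lower barrier need only be a subsolution lying below $\phi_{R,\delta}$ on that boundary arc and below $v$ on the rest. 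Second, since $\pi_n$ is a local isometry and $R\ge R_1>R_0$, for $r<(R_1-R_0)/2$ the set $B_r(p_0)\cap\overline{A}^n_{R_0,R}$ is isometric to $B_r(q_0)\cap\overline{A}_{R_0,R}$ for a point $q_0$ lying on exactly one of the planar circles $c_{R_0}$, $c_R$, and it meets only that circle; thus the barriers may be constructed downstairs in the plane, and by the rotational invariance of $A_{R_0,R}$ and the compactness of $c_{R_0}$ and $c_R$ they can be chosen uniformly in $p_0$ and in $n$, which is precisely what yields a single $\epsilon=\epsilon(R_0,R_1,K)$. The case $\delta=0$ is trivial, since then $u_{R,0}=v$ by uniqueness.

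\emph{The outer circle.} For $q_0\in c_R$ the domain $A_{R_0,R}$ lies on the \emph{convex} side of the circle (normal curvature $+1/R>0$ with respect to the inner normal), and the boundary datum there is the real-analytic function $v+\delta$. Barriers of the form $v+\delta\pm g$ with $g\ge 0$, $g(q_0)=0$ then work: the upper one requires $v+\delta+g$ to be a supersolution (immediate at a convex point), the lower one requires $v+\delta-g$ to be a subsolution with $g\ge\delta$ on $\partial B_r(q_0)\cap A_{R_0,R}$ (so that $v+\delta-g\le v\le u_{R,\delta}$ there). This is the classical barrier construction for the minimal surface equation at a boundary point of nonnegative curvature (\cite{GilbargTrudingerbook,Nitschebook}), and it needs no smallness of $\delta$ beyond what keeps $g$ Lipschitz. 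Together with $v\le u_{R,\delta}\le v+\delta$ it gives $\lim_{p\to p_0}u_{R,\delta}(p)=\phi_{R,\delta}(p_0)$ for every $p_0\in c^n_R$.

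\emph{The inner circle, and the main obstacle.} The delicate case is $q_0\in c_{R_0}$, where $A_{R_0,R}$ lies on the \emph{concave} side of the circle (normal curvature $-1/R_0<0$); here no barrier exists for data of large slope, so the bound $\|\nabla v\|<K/2<\tfrac12$ and the smallness of $\delta$ must both be exploited. A lower barrier is automatic: $v$ is a subsolution with $v|_{c^n_{R_0}}=\phi_{R,\delta}|_{c^n_{R_0}}$ and $v\le u_{R,\delta}$, so $\liminf_{p\to p_0}u_{R,\delta}(p)\ge v(p_0)$. For the upper barrier, the plan is to produce a supersolution of the form $v+g$ on $B_r(q_0)\cap A_{R_0,R}$ with $g\ge 0$, $g(q_0)=0$, and $g\ge\delta$ on $\partial B_r(q_0)\cap A_{R_0,R}$; note that $g(q_0)=0$ is what makes the resulting $\limsup$ equal to $v(p_0)$ exactly and not merely $v(p_0)+O(\delta)$, which is why a genuine geometric (e.g.\ half-catenoid) barrier is not by itself enough. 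Writing $F(p)=p/\sqrt{1+\|p\|^2}$ and $Q(u)={\rm div}\,F(\nabla u)$, one has
\[
	Q(v+g)=Q(v)+\bigl[F^i_{p_j}(\nabla v+\nabla g)-F^i_{p_j}(\nabla v)\bigr]\,\partial_{ij}v+F^i_{p_j}(\nabla v+\nabla g)\,\partial_{ij}g,
\]
and $Q(v)=0$; since $\|\nabla v\|<\tfrac12$, the coefficient matrix $\bigl(F^i_{p_j}(\nabla v+\nabla g)\bigr)$ is uniformly elliptic with constants depending only on $K$ as soon as $\|\nabla g\|$ is small. One then takes $g$ of exponential type (for instance $g=C\bigl(e^{-\alpha d(q_0)}-e^{-\alpha d}\bigr)$ in a suitable distance function $d$ adapted to $q_0$, so that the level sets of $g$ are convex and $\|\nabla g\|\le C\alpha$), fixes $\alpha=\alpha(R_0,r,K)$ large enough that the strongly negative second-order term from the convexity of $g$ dominates the curvature term $-1/R_0$, the lower-order terms, and the term $[\,\cdots\,]\,\partial_{ij}v$, thereby making $v+g$ a supersolution, and finally chooses $C$ comparable to $\delta$ so that $g\ge\delta$ on $\partial B_r(q_0)\cap A_{R_0,R}$ while $\|\nabla g\|\le C\alpha$ stays below the ellipticity threshold; this forces $\epsilon=\epsilon(R_0,R_1,K)$. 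Comparing $v+g$ with the members of $\mathcal{F}^-_{R,\delta}$ on $B_r(q_0)\cap A_{R_0,R}$ (legitimate because $w\le v\le v+g$ on the $c_{R_0}$-arc and $w\le v+\delta\le v+g$ on $\partial B_r(q_0)\cap A_{R_0,R}$ for every $w\in\mathcal{F}^-_{R,\delta}$) gives $u_{R,\delta}\le v+g$ near $q_0$, hence $\limsup_{p\to p_0}u_{R,\delta}(p)\le v(p_0)$, and with the lower bound $\lim_{p\to p_0}u_{R,\delta}(p)=v(p_0)=\phi_{R,\delta}(p_0)$. The hard part is exactly this construction: making the competition at the concave circle $c_{R_0}$ between its negative curvature $-1/R_0$ and the minimal surface operator quantitative enough — in particular controlling the term $[\,\cdots\,]\,\partial_{ij}v$ uniformly — so that the threshold $\epsilon$ depends only on $R_0$, $R_1$ and $K$.
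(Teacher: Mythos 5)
Your overall strategy (Perron's method plus barriers, with the outer circle $c^n_R$ easy because the domain lies on its convex side, and all the work concentrated at the concave inner circle $c^n_{R_0}$) is the same as the paper's. But the crucial step — the upper barrier at $p_0\in c^n_{R_0}$ — has a genuine gap, precisely where you yourself say "the hard part" is. Your barrier has the additive form $v+g$, and the supersolution inequality is checked through the quasilinear expansion, whose first term $\bigl[F^i_{p_j}(\nabla v+\nabla g)-F^i_{p_j}(\nabla v)\bigr]\partial_{ij}v$ can only be absorbed by the concavity of $g$ if one has a bound on $D^2v$ near $c^n_{R_0}$. No such bound follows from the hypotheses: only $\|\nabla v\|<K/2$ is assumed, interior curvature estimates for minimal graphs degenerate like the reciprocal of the distance to the boundary circle, and in the intended application $v$ is only a solution on $\overline{A}^n_{R_0}$, so any Hessian bound along $c^n_{R_0}$ depends on $v$ itself. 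Hence your $\epsilon$ cannot be shown to depend only on $R_0$, $R_1$, $K$, which is exactly what the claim asserts (and without some Hessian bound the supersolution property of $v+g$ is not established at all). There is also a geometric problem with the proposed profile $g=C\bigl(e^{-\alpha d(q_0)}-e^{-\alpha d}\bigr)$: if $d$ is the radial distance, $g$ vanishes on the whole arc of $c^n_{R_0}$ and therefore fails the requirement $g\ge\delta$ on the portion of $\partial B_r(q_0)\cap A^n_{R_0,R}$ near the corners, where competitors are only known to satisfy $w\le v+\delta$; if instead $g$ grows with the distance from $q_0$, its construction must fight the negative boundary curvature $-1/R_0$ in the tangential direction, which is exactly the obstruction one is trying to circumvent.

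The paper avoids differentiating $v$ twice altogether. At $p_0\in c^n_{R_0}$ it takes the cone $J_K$ of slope $K$ with vertex $(\pi_n(p_0),v(p_0))$, truncates it at a small height $\mu$, and solves the Dirichlet problem over the resulting plane region $U$ with the cone as boundary data; $U$ satisfies an exterior sphere condition of radius $R_0$ and the data is Lipschitz with constant $K<1$, so Williams' theorem \cite{Williams1984JRAM} provides a solution $f$ with $(f\circ\pi_n)(p_0)=v(p_0)$ and $f\circ\pi_n>v$ elsewhere, the comparison with $v$ using only $\|\nabla v\|<K/2$ against the slope-$K$ cone. The number $\epsilon_1<\inf\{v(p_0)+\mu-v\}$ on $b\widehat U\setminus c^n_{R_0}$ then depends only on $\mu$ and $K$, hence on $R_0,R_1,K$, and $\min\{f\circ\pi_n,\,v+\epsilon_1\}$, extended by $v+\epsilon_1$, is the required global supersolution. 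If you wish to keep an explicit barrier rather than invoking Williams, it must likewise be a profile of slope comparable to $K$ erected over the single value $v(p_0)$, independent of $v$, so that only the gradient bound enters. Two smaller points: your outer-circle argument compares $v+\delta-g$ with $u_{R,\delta}$ locally, but $u_{R,\delta}$ has no boundary values on $c^n_R$ yet, so you should glue the local subsolution into a global member of $\mathcal{F}^-_{R,\delta}$ (e.g.\ with $v$, using $g\ge\delta$ on the lateral boundary) and then quote \eqref{eq:abovebelow}; the paper does this by gluing $\max\{f\circ\pi_n+\delta,\,v+\delta-\epsilon_2\}$, which is also where its second smallness condition $\delta\le\epsilon_2$ comes from.
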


\begin{proof}
Choose $R\geq R_1$ and a point $p_0\in bA^n_{R_0,R}$, and let us distinguish cases.

\noindent{\em Case 1: $p_0\in c_{R_0}^n$}. 

Let us prove the existence of a number $\epsilon_1>0$, 
depending only on $R_0$, $R_1$, and $K$, for which the following statement holds.
Given $\delta\in [0,\epsilon_1]$ there exists  $\nu_{p_0}\in \Cscr^0(\overline{A}^n_{R_0})$  such that 
$\nu_{p_0}(p_0)=v(p_0)=\phi_{R,\delta}(p_0)$ and   $\nu_{p_0}|_{\overline{A}^n_{R_0,R}}\in \mathcal{F}^+_{R,\delta}$. 

Indeed, write $\pi_n(p_0)=(x_0,y_0)\in c_{R_0}.$
Set $C_{R_0}:=\{(x,y,z)\in \r^3 : \|(x,y)\|<R_0\}$ and  
$J_K:=\{(x,y,z)\in \r^3 : 0\leq z-v(p_0)= K \|(x-x_0, y-y_0)\| \}$. 
Pick $\mu>0$ small enough in terms of $R_0$, $R_1$, and $K$, such that the set 
\[
\gamma:=\big((bC_{R_0} \cap J_K)\cap \{0\leq z-v(p_0)\leq \mu\}\big)\cup \big((J_K\setminus C_{R_0}) \cap \{z-v(p_0)=\mu\}\big)
\]
is a Jordan curve. It follows that $\gamma$ has one-to-one orthogonal projection $\gamma_0$ to the plane $\{z=0\}\equiv \r^2$. 
Ensure also that $\gamma_0$ bounds a topological disc  $U\subset \r^2$ with $\overline{U}\subset   A_{R_0,R_1}\cup c_{R_0}$. 
Thus, $\pi_n^{-1}(U)$ consists of $n$ disjoint isometric copies of $U$; write $\widehat U\subset A^n_{0}$ 
for the connected component of $\pi_n^{-1}(U)$ 
containing $p_0$. Denote by $\phi\colon b  U\to \r$ the (unique) continuous function such that 
$\{(p,\phi(p)) : p\in b U=\gamma_0\}=\gamma$.

Further, the domain $U$ satisfies an exterior sphere condition with radius $R_0$ (cf.\ \cite[Definition 1.4 (i)]{Williams1984JRAM}), 
and thus, if $\mu>0$ is sufficiently  small in terms of $R_0$, $R_1$,  and $K$, 
the Dirichlet problem for the minimal surface equation in $U$ with boundary data $\phi$ 
has a unique solution $f\colon U \to \r$ satisfying 
$(f\circ \pi_n)(p_0)=\phi(p_0)=v(p_0)$ and $f\circ \pi_n > v$ in $\widehat U\setminus\{p_0\}$; 
see \cite{Williams1984JRAM} and take into account condition {\rm (b)} in the statement of the 
lemma and that $\gamma\subset J_K$. It follows that 
\[
	\inf \{(f\circ \pi_n)(p)- v(p) : p\in b\widehat U\setminus c^n_{R_0}\}
	=\inf \{v(p_0)+\mu- v(p) : p\in b\widehat U\setminus c^n_{R_0}\}>0.
\] 
Finally, take $\epsilon_1>0$ smaller than this infimum. 
Note that this number does not depend on  $v$; take into account {\rm (b)}. Further, 
since $\mu$  depends  on $R_0$, $R_1$, and $K$ but not on  $p_0\in c^n_{R_0}$, the same holds for  $\epsilon_1$. It suffices to set $\nu_{p_0}\colon \overline{A}^n_{R_0}\to \r$, $\nu_{p_0}= \min\{f\circ \pi_n,v+\epsilon_1\}$ on $\widehat U$ and  $\nu_{p_0}= v+\epsilon_1$ on $\overline{A}^n_{R_0}\setminus \widehat U$. 

Since $\nu_{p_0}|_{\overline{A}^n_{R_0,R}}\in \mathcal{F}^+_{R,\delta}$, $\delta\in [0,\epsilon_1]$, and $\nu_{p_0}(p_0)=v(p_0)=\phi_{R,\delta}(p_0)$, the bounds \eqref{eq:abovebelow} and \eqref{eq:comparar} trivially ensure that $\lim_{p\to p_0} u_{R,\delta}(p)=\phi_{R,\delta}(p_0)$.

\smallskip

\noindent{\em Case 2: $p_0\in c_{R}^n$.} 

Let us now prove the existence of a number $\epsilon_2>0$, depending only on $R_0$, $R_1$, and $K$, 
for which the following statement holds. Given $\delta\in [0,\epsilon_2]$,  
 there exists  $\nu_{p_0}\in \Cscr^0(\overline{A}^n_{R_0,R})\cap  \mathcal{F}^-_{R,\delta}$ such that 
 $\nu_{p_0}(p_0)=v(p_0)+\delta=\phi_{R,\delta}(p_0)$.

Indeed, consider a small disc $V\subset \r^2$  centered at the origin and with radius less than $R_1-R_0$, 
set $U_R:=(\pi_n(p_0)+V) \cap \overline{A}_{R_0,R}$, and let $\widehat U_R\subset \overline{A}^n_{R_0,R}$ 
denote the connected component of $\pi_n^{-1}(U_R)$ containing $p_0$; obviously, 
$\pi_n|_{\widehat U_R} : \widehat U_R\to U_R$ is an isometry. Choose a linear function $f\colon \r^2 \to \r$ satisfying 
$(f\circ \pi_n)(p_0)=v(p_0)$ and $f\circ \pi_n < v$ on $\widehat U_R\setminus\{p_0\}$, and take 
$0<\epsilon_2<\inf \{v(p)-(f\circ \pi_n)(p) : p\in b\widehat U\setminus {c}^n_{R}\}$. 
The existence of such an  $\epsilon_2$ follows from item {\rm (b)}, and it can be chosen depending on neither   $v$ nor  $R$ 
(it only depends on $R_0$, $R_1$, and $K$). Given $\delta \in [0,\epsilon_2]$, it suffices to set 
$\nu_{p_0}\colon \overline{A}^n_{R_0,R}\to \r$, $\nu_{p_0}= \max\{f\circ \pi_n+\delta,v+\delta -\epsilon_2\}$ on 
$\widehat U_R$, and  $\nu_{p_0}= v+\delta-\epsilon_2$ on $\overline{A}^n_{R_0,R}\setminus \widehat U_R$.

As above, since $\nu_{p_0}\in \mathcal{F}^-_{R,\delta}$ and $\nu_{p_0}(p_0)=v(p_0)+\delta=\phi_{R,\delta}(p_0)$, \eqref{eq:abovebelow} and \eqref{eq:comparar} guarantee that $\lim_{p\to p_0} u_{R,\delta}(p)=\phi_{R,\delta}(p_0)$.

To finish the proof of the claim, it suffices to choose $\epsilon:=\min\{\epsilon_1,\epsilon_2\}$.
\end{proof}

We continue with the proof of Lemma \ref{lem:williams}. In view of Claim \ref{cl:barrier} it remains to check that, given 
numbers $\delta\in [0,\epsilon]$ and $R\geq R_1$, the solution $u_{R,\delta}$ given by \eqref{eq:uRd} is unique 
and satisfies conditions {\rm (i)}, {\rm (ii)}, and {\rm (iii)}. Uniqueness  follows directly from the maximum principle. 
Property {\rm (i)} is ensured by \eqref{eq:comparar}. Since the boundary data $\phi_{R,\delta}$ depend continuously 
on $(R,\delta)\in [R_1,+\infty) \times [0,\epsilon]$, the same holds for the solutions $u_{R,\delta}$, proving {\rm (ii)}.
In order to prove {\rm (iii)}, fix a number $\delta\in [0,\epsilon]$ and take any divergent sequence 
$\{R_j\}_{j\in \n}\subset [R_1,+\infty)$. 
By standard compactness results (see for instance \cite{Anderson1985ASENS}), the sequence $\{u_{R_j,\delta}\}_{j\in \n}$ 
converges uniformly on compact subsets of $\overline{A}^n_{R_0}$ to a solution $u$ of the minimal surface equation in 
$A^n_{R_0}$ with boundary data $u=v$ on $c^n_{R_0}$. Furthermore, \eqref{eq:comparar} gives that $v\leq u\leq v+\delta$, 
and hence $u=v$ by the maximum principle at infinity (see for instance \cite{MeeksRosenberg2008JDG}). 
This proves {\rm (iii)} and concludes the proof.
\end{proof}


\begin{proof}[Proof of Theorem \ref{th:FTC-infinity}]
If $S$ is compact, then the result follows from Proposition \ref{prop:max-principle}. 

Assume now that $S$ is not compact.  Up to passing to the two sheeted orientable covering, 
we may assume  that $S=X(M)$, where $M$ is a noncompact Riemann surface with compact boundary 
$bM\neq\emptyset$, and $X\colon M\to \r^3$ is  a complete conformal minimal immersion with finite total curvature. 
With this notation, we have $bS=X(bM)$. 

The assumptions on $S$ imply that $M$ is of finite topology and of parabolic conformal type (in particular, its ends are annular conformal punctures), and the (conformal) Gauss map ${\rm N}:M\to \s^2$ of $X$ extends conformally to the ends; see \cite{Ossermanbook}.  Given an annular end $E\subset M$, $E\cong \overline{\d}\setminus \{0\}$,  let ${\rm n}_E$ denote the limit normal vector of $X(E)$ at infinity and  $\Pi_E$  the vectorial plane in $\r^3$ orthogonal to ${\rm n}_E$. 

It is also well known that the minimal immersion  $X$ is a proper map and all the ends are {\em finite sublinear multigraphs}; see \cite{JorgeMeeks1983T}.  The latter means that, for any annular end $E$ of $M$, there exists  an open solid circular cylinder $C$, 
with axis parallel to  ${\rm n}_E$,  such that: 
\begin{enumerate}[\rm (i)]
\item $E\cap X^{-1}(\overline{C})$ is compact and contains $bE$;
\item $(\pi_E\circ X)|_{E\setminus X^{-1}(C)}\colon E\setminus X^{-1}(C)\to \Pi_E\setminus C$ is a finite covering, 
where $\pi_E\colon \r^3\to \Pi_E$ is the orthogonal projection;
\item $\lim_{j\to \infty} \frac1{\|X(p_j)\|} \langle {\rm n}_E,X(p_j)\rangle=0$ for any divergent sequence 
$\{p_j\}_{j\in \n}\subset E$.
\end{enumerate}
Write  $w_E$ for the winding number of $X(E)$ at infinity; note that $w_E$  is the degree of the covering 
$(\pi_E\circ X)|_{E\setminus X^{-1}(C)}$.

Recall that $bS\neq \emptyset$. If $D=\r^3$, there is nothing to prove.
Assume now that $D^c\neq \emptyset$ and let 
\[
	d:=\dist(S,D^c)<+\infty.
\] 
It suffices to prove the following claim:
\begin{equation}\label{eq:p0q0}
\text{there exist points $\bx_0\in S$ and $\by_0\in bD$ such that $\|\bx_0-\by_0\|=d$.}
\end{equation}
Indeed, assume for a moment that this holds.  If $\bx_0\in bS$, we are done. Otherwise, $\bx_0\in S\setminus bS$, 
and we infer from  Proposition \ref{prop:max-principle} that the surface
$S$ is flat and $\by_0-\bx_0+S\subset bD$. Thus, $d=\dist(bS,D^c)$ which concludes the proof of Theorem \ref{th:FTC-infinity} provided that \eqref{eq:p0q0} holds.

We now prove the assertion \eqref{eq:p0q0}. We reason by contradiction and assume that
\begin{equation}\label{eq:continf} 
\dist(\bx,D^c)>d\quad \text{for all $\bx\in S$}.
\end{equation}
Under this assumption, there exists an annular end $X(E)\subset S$ with $\dist(X(E),D^c)=d$; 
recall that $E$ is conformally equivalent to $\overline\d\setminus\{0\}$. Set 
\[
\text{$E_t:=t\, {\rm n}_E +X(E)$ \ for all $t\geq 0$.}
\]
In particular, $E_0=X(E)$. Condition \eqref{eq:continf} ensures that 
\begin{equation} \label{eq:region}
\bigcup_{t\in [0,d]} E_t\subset D.
\end{equation}
Set $C_R:=\{(x,y,z)\in \r^3 : \|(x,y)\|<R\}$ for $R>0$.  
Up to a rigid motion, a shrinking of $E$, and taking $R_0>0$ large enough,  
we may  assume that ${\rm n}_E=(0,0,-1)$,  $X(bE) \subset bC_{R_0}$ and 
\begin{equation}\label{eq:-1}
\dist(E_d,D^c)=0.
\end{equation}

Given $\delta\ge 0$, we set $\gamma_{R_0}(\delta):= b E_{d+\delta}=(d+\delta) {\rm n}_E+bE_0\subset bC_{R_0}$. 
Since $(\bigcup_{t\in [0,d]} E_t)\cap \overline C_{R}$ is compact for all $R\geq R_0$ (see {\rm (i)}), 
condition \eqref{eq:region} provides numbers $\epsilon>0$ and $R_1>R_0$ such that
\begin{equation}\label{eq:gammD1}
 \bigcup_{t\in [0,d+\epsilon]} (E_t\cap \overline{C}_{R_1})\subset D.
\end{equation}
In particular, $\gamma_{R_0}(\delta)\subset D$ for all $\delta\in[0,\epsilon]$. Set $\gamma_R(\delta):=E_{d+\delta} \cap bC_R$ 
for all $\delta\in [0,\epsilon]$ and $R>R_0$. For simplicity,  write $n$ for the winding number $w_E$ of $X(E)$ 
as multigraph over $A_{R_0}=\{(x,y)\in\r^2 : \|(x,y)\|>R_0\}$, and denote by $\phi_{R,\delta}\colon bA^n_{R_0,R}\to \r$ 
the unique analytic function satisfying
\[
\text{$\gamma_{R_0}(\delta)\cup \gamma_R(0)=\{(p,\phi_{R,\delta}(p)) : p\in bA^n_{R_0,R}\}$ \ 
for all $\delta\in [0,\epsilon]$ and $R>R_0$.}
\]
(See Definition \ref{def:arr} for the notation.)
Likewise, let $v\colon \overline{A}^n_{R_0}\to \r$ denote  the unique analytic function such that 
\[
\text{$E_d=\{(p,v(p)) : p\in \overline{A}^n_{R_0}\}$.} 
\]
Without loss of generality (increasing $R_0$ if necessary), we may assume that $\|\nabla v\|<1/4$ on $\overline{A}^n_{R_0}$; 
see Property {\rm (iii)} above.

On the other hand, if $\epsilon>0$ is chosen small enough, then Lemma \ref{lem:williams} shows that the Dirichlet problem for 
the minimal surface equation in $ A^n_{R_0,R}$ with boundary data $\phi_{R,\delta}$ has a unique solution 
 $u_{R,\delta}$ for all $R\geq R_1$ and $\delta \in [0,\epsilon]$. Furthermore, 
 \begin{equation}\label{eq:vduv}
 	v-\delta \leq u_{R,\delta}\leq v\ \  \text{in $ A^n_{R_0,R}$ for all $R\geq R_1$ and $\delta \in [0,\epsilon]$.}
 \end{equation}
 Fix a pair of numbers $\delta\in [0,\epsilon]$ and $R\geq R_1$, and set  
 $T_{R,\delta}:=\{(p,u_{R,\delta}(p)) : p\in \overline{A}^n_{R_0,R}\}$.  Note that \eqref{eq:gammD1} and 
\eqref{eq:vduv} guarantee that $T_{R_1,\delta}\subset D$, whereas \eqref{eq:region} and \eqref{eq:gammD1} ensure that 
$bT_{R,\delta}=\gamma_{R_0}(\delta)\cup \gamma_R(0)\subset D$ for all $R\geq R_1$. Thus, the  Kontinuit\"atssatz for 
minimal surfaces (Proposition \ref{prop:Kontinuitatssatz}) implies that $T_{R,\delta}\subset D$ for all $R\geq R_1$. 
Further, by Lemma \ref{lem:williams} we have $T_{R,\delta}\to E_{d+\delta}$ uniformly on compact subsets as $R\to+\infty$,
and hence  $E_{d+\delta}\subset \overline D$. Since this holds for arbitrary $\delta\in [0,\epsilon]$, we infer that 
$\bigcup_{t\in [0,d+\epsilon]} E_t\subset \overline D$, and hence $\bigcup_{t\in [0,d+\epsilon)} E_t\subset D$. 
This contradicts \eqref{eq:-1} and thereby proves \eqref{eq:p0q0}. 
\end{proof}


\section{Null hulls in $\c^n$ and minimal hulls in $\r^n$}\label{sec:hulls}

The approximate solution of the Riemann-Hilbert problem for null discs, furnished by 
\cite[Lemma 3.3]{AlarconDrinovecForstnericLopez2015},  allows us to  extend the main results of the paper 
\cite{DrinovecForstneric2015TAMS} to null hulls in $\c^n$, and  minimal hulls in $\r^n$, for any $n\ge 3$. 
We now explain these generalizations. The proofs are similar to those 
in \cite{DrinovecForstneric2015TAMS} and are left to the reader.

We denote  by $\Ngot(\d,\Omega)$ the set of all immersed null holomorphic discs $\cd\to\c^n$ with range in a 
domain $\Omega\subset \c^n$, and we write 
\[
	\Ngot(\d,\Omega,\bz ) =\{F\in \Ngot(\d,\Omega) : F(0)=\bz \}\quad
\text{for \ $\bz \in \Omega$}.
\]
The case $n=3$ of the next result is \cite[Theorem 2.10]{DrinovecForstneric2015TAMS}; 
the general case $n\ge 3$ is proved in exactly the same way 
by using  \cite[Lemma 3.3]{AlarconDrinovecForstnericLopez2015} instead of \cite[Lemma 2.8]{DrinovecForstneric2015TAMS}.

%
%
%
%

\begin{theorem} [Null plurisubharmonic minorant]
\label{th:nullpshminorant} 
Let $\phi\colon \Omega\to \r\cup\{-\infty\}$ be an upper semicontinuous 
function on a domain $\Omega\subset\c^n$ $(n\ge 3)$. Then the function 
\begin{equation}
\label{eq:nullPoisson-funct}
   u(\bz ) := \inf \Big\{\int^{2\pi}_0 \phi(F(\E^{\imath t}))\, 
   \frac{dt}{2\pi} : \ F\in \Ngot(\d,\Omega,\bz ) \Big\}, \quad \bz \in \Omega
\end{equation}
is null plurisubharmonic on $\Omega$ or identically $-\infty$; 
moreover, $u$ is the supremum of all null plurisubharmonic 
functions on $\Omega$ which are not greater than $\phi$. 
\end{theorem}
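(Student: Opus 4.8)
The plan is to follow the classical argument for the disc-functional description of the largest plurisubharmonic minorant (the ``Poisson envelope'' construction of Poletsky, Bu--Schachermayer, etc.), adapted to the null-disc setting exactly as in \cite{DrinovecForstneric2015TAMS}, using the Riemann--Hilbert tool \cite[Lemma 3.3]{AlarconDrinovecForstnericLopez2015} in place of the $n=3$ version \cite[Lemma 2.8]{DrinovecForstneric2015TAMS} wherever gluing of null discs is needed. There are two halves: (1) the function $u$ defined by \eqref{eq:nullPoisson-funct} is null plurisubharmonic (or $\equiv -\infty$); (2) $u$ dominates every null plurisubharmonic minorant of $\phi$, and, being itself such a minorant, is the largest one.

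\emph{The easy half} is that $u$ is a minorant of $\phi$ and majorizes every null psh minorant. Testing \eqref{eq:nullPoisson-funct} with the constant disc $F\equiv \bz$ gives $u(\bz)\le \phi(\bz)$. If $v\in\NPsh(\Omega)$ and $v\le\phi$, then for any $F\in\Ngot(\d,\Omega,\bz)$ the composition $v\circ F$ is subharmonic on $\d$ by Proposition \ref{prop:testing-null}, hence by the sub-mean-value inequality $v(\bz)=v(F(0))\le \int_0^{2\pi} v(F(\E^{\imath t}))\,\frac{dt}{2\pi}\le \int_0^{2\pi}\phi(F(\E^{\imath t}))\,\frac{dt}{2\pi}$; taking the infimum over $F$ yields $v\le u$ on $\Omega$. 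So the whole theorem reduces to showing $u\in\NPsh(\Omega)$ (when $u\not\equiv-\infty$).

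\emph{The hard half} is the null plurisubharmonicity of $u$. By Proposition \ref{prop:testing-null} it suffices to verify that $u\circ G$ is subharmonic on $\d$ for every $G\in\Ngot(\d,\Omega)$, i.e.\ that $u\circ G$ satisfies the sub-mean-value inequality on small circles. Fixing such a $G$, a base point $\zeta_0\in\d$, and $\varepsilon>0$, one chooses for each boundary point $\E^{\imath t}$ of a small circle a test null disc $F_{t}\in\Ngot(\d,\Omega, G(\E^{\imath t}))$ whose boundary integral of $\phi$ is within $\varepsilon$ of $u(G(\E^{\imath t}))$; after mollifying in $t$ one gets a continuous family. The key analytic step is then to \textbf{glue} this family of null discs onto $G$ by an approximate solution of a Riemann--Hilbert boundary value problem for null curves: this is precisely where \cite[Lemma 3.3]{AlarconDrinovecForstnericLopez2015} is invoked, producing a single null disc $\widetilde{F}\in\Ngot(\d,\Omega,G(\zeta_0))$ whose boundary values are uniformly close to the ``outer'' boundary values of the pasted family, so that its boundary integral of $\phi$ is controlled (using upper semicontinuity of $\phi$, which allows us to pass from closeness in $\c^n$ to an upper estimate on $\phi$, modulo bounding $\phi$ from above by a continuous function on a compact set and a standard exhaustion/truncation of $\phi$ by continuous functions). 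This yields $u(G(\zeta_0))\le \int_0^{2\pi} u(G(\E^{\imath t}))\,\frac{dt}{2\pi}+C\varepsilon$ on the small circle about $\zeta_0$; letting $\varepsilon\to 0$ gives the sub-mean-value inequality, hence subharmonicity of $u\circ G$, hence $u\in\NPsh(\Omega)$. One also checks upper semicontinuity of $u$ along the way (again by composing with constant discs and using u.s.c.\ of $\phi$ together with the approximation by continuous functions), and the dichotomy $u\in\NPsh(\Omega)$ or $u\equiv-\infty$ follows from the fact that a subharmonic function on a connected set that is $-\infty$ somewhere on a set of positive measure is $\equiv-\infty$.

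\emph{The main obstacle} is the Riemann--Hilbert gluing step: one must paste a continuously varying boundary family of null holomorphic discs onto a given null disc $G$ while (a) remaining inside $\Omega$, (b) keeping the center fixed at $G(\zeta_0)$, and (c) controlling the boundary values closely enough that the u.s.c.\ function $\phi$ behaves well under the passage to the limit. All of this is exactly the content of \cite[Lemma 3.3]{AlarconDrinovecForstnericLopez2015} and the bookkeeping around it in \cite[Section 2]{DrinovecForstneric2015TAMS}; since the paper explicitly says ``the proofs are similar to those in \cite{DrinovecForstneric2015TAMS} and are left to the reader,'' I would present the above as a sketch, citing those two sources for the technical gluing and the measure-theoretic truncation of $\phi$, and spell out only the elementary half (the minorant and extremality properties) in full.
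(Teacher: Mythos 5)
Your proposal is correct and takes essentially the same route as the paper, whose entire proof of this theorem is the remark that the case $n=3$ is \cite[Theorem 2.10]{DrinovecForstneric2015TAMS} and that the general case $n\ge 3$ is obtained verbatim by substituting \cite[Lemma 3.3]{AlarconDrinovecForstnericLopez2015} for \cite[Lemma 2.8]{DrinovecForstneric2015TAMS} in the Riemann--Hilbert gluing step --- exactly the substitution you describe, with the elementary minorant/extremality half spelled out as you do. The only small slip is testing with the constant disc $F\equiv\bz$ to get $u\le\phi$: the elements of $\Ngot(\d,\Omega,\bz)$ are immersed null discs, so one should instead use small affine null discs $\zeta\mapsto\bz+r\theta\zeta$ with $\theta\in\Agot_*$ and let $r\to0$, invoking the upper semicontinuity of $\phi$.
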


%
%
%
%
\begin{remark}\label{rem:basic}
The disc functional $P_\phi$ in (\ref{eq:nullPoisson-funct}),  
which assigns to any holomorphic disc $F\colon \cd\to\Omega$ 
the average $P_\phi(F)=\int^{2\pi}_0 \phi(F(\E^{\imath t}))\, \frac{dt}{2\pi} \in \r\cup\{-\infty\}$,  
is called the  {\em Poisson functional} associated to the function $\phi$.
If we take all holomorphic discs $F\colon \cd\to\Omega$ with $F(0)=\bz $ 
(instead of just null discs) in (\ref{eq:nullPoisson-funct}), we obtain 
the biggest plurisubharmonic function on $\Omega$ satisfying
$u\le \phi$. This fundamental result of Poletsky \cite{Poletsky1991PSPM,Poletsky1993IUMJ} and 
Bu and Schachermayer \cite{BuSchachermayer1992TAMS} was generalized  by Rosay \cite{Rosay2003IUMJ,Rosay2003JKMS} to all
complex manifolds $\Omega$ (see also L\'arusson and Sigurdsson \cite{LarussonSigurdsson1998JRAM,LarussonSigurdsson2003JRAM}),
and by Drinovec Drnov\v sek and Forstneri\v c \cite[Theorem 1.1]{DrinovecForstneric2012IUMJ}
to all locally irreducible complex space.
\qed\end{remark}

Given a domain $\omega\subset \r^n$, we denote by $\Mgot(\d,\omega)$ the set of all conformal minimal immersions
$\cd\to \omega$. 
By using Theorem \ref{th:nullpshminorant}, along with the connection between null plurisubharmonic and 
minimal plurisubharmonic functions  (see Lemma \ref{lem:minimal}), 
we obtain the following result. The case $n=3$ is \cite[Theorem 4.5]{DrinovecForstneric2015TAMS}.

%
%
%
%
\begin{theorem} [Minimal plurisubharmonic minorant]
\label{th:minpshminorant} 
Let $\omega$ be a domain in $\r^n$ $(n\ge 3)$, and let $\phi\colon \omega\to \r\cup\{-\infty\}$ 
be an upper semicontinuous function. Then the function 
\begin{equation}
\label{eq:minPoisson-funct}
   u(\bx ) := \inf \Big\{\int^{2\pi}_0 \phi(F(\E^{\imath t}))\, 
   \frac{dt}{2\pi} : \ F\in \Mgot(\d,\omega),\ F(0)=\bx \Big\}, \quad \bx \in \omega
\end{equation}
is minimal plurisubharmonic on $\omega$ or identically $-\infty$; 
moreover, $u$ is the supremum of the minimal plurisubharmonic 
functions on $\omega$ which are not greater than $\phi$. 
\end{theorem}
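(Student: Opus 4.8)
The plan is to deduce Theorem \ref{th:minpshminorant} from Theorem \ref{th:nullpshminorant} by lifting the problem to the tube domain $\Tcal_\omega = \omega \times \imath \r^n \subset \c^n$ and using the dictionary between conformal minimal discs in $\r^n$ and null holomorphic discs in $\c^n$. First I would set $\Phi \colon \Tcal_\omega \to \r \cup \{-\infty\}$ to be the pull-back $\Phi(\bz) = \phi(\Re \bz)$, which is upper semicontinuous since $\phi$ is and the projection $\bz \mapsto \Re\bz$ is continuous. Apply Theorem \ref{th:nullpshminorant} to $\Phi$ on $\Omega = \Tcal_\omega$ to obtain the function
\[
	U(\bz) = \inf\Bigl\{ \int_0^{2\pi} \Phi(F(\E^{\imath t}))\, \frac{dt}{2\pi} : F \in \Ngot(\d,\Tcal_\omega,\bz) \Bigr\},
\]
which is null plurisubharmonic on $\Tcal_\omega$ (or $\equiv -\infty$) and is the largest null plurisubharmonic minorant of $\Phi$ there.

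Next I would show that $U$ is independent of the imaginary variable $\by = \Im\bz$, so that $U(\bx + \imath\by) = u(\bx)$ for the function $u$ in \eqref{eq:minPoisson-funct}. The key point is that for a null holomorphic disc $F \colon \cd \to \Tcal_\omega$, the integrand $\Phi(F(\E^{\imath t})) = \phi(\Re F(\E^{\imath t}))$ depends only on the real part $\Re F$, which is a conformal minimal disc into $\omega$; conversely, every conformal minimal disc $G \colon \cd \to \omega$ with $G(0) = \bx$ is the real part of some null holomorphic disc $\cd \to \c^n$, and by adding a constant imaginary vector we may arrange the value at $0$ to be any $\bz$ with $\Re\bz = \bx$, keeping the range inside $\Tcal_\omega = \omega \times \imath\r^n$. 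Also, translating a null disc by $\imath\bc$ for $\bc \in \r^n$ preserves membership in $\Ngot(\d,\Tcal_\omega)$ and does not change $\Re F$ nor the integral. This shows the infimum defining $U(\bz)$ ranges over exactly the same set of integral values as the infimum defining $u(\Re\bz)$, giving $U(\bx+\imath\by) = u(\bx)$ for all $\by$. Then Lemma \ref{lem:minimal} applied to $U$ (which is null plurisubharmonic and imaginary-independent) yields that $u$ is minimal plurisubharmonic on $\omega$, or $u \equiv -\infty$.

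Finally I would verify the extremality: $u$ is the supremum of all minimal plurisubharmonic functions on $\omega$ bounded above by $\phi$. If $v \in \MPsh(\omega)$ satisfies $v \le \phi$, then $V(\bx + \imath\by) := v(\bx)$ is null plurisubharmonic on $\Tcal_\omega$ by Lemma \ref{lem:minimal} and satisfies $V \le \Phi$; by the extremal property in Theorem \ref{th:nullpshminorant} we get $V \le U$, hence $v \le u$ on $\omega$. Conversely $u$ itself is such a minimal plurisubharmonic minorant of $\phi$ (it is minimal plurisubharmonic by the previous paragraph, and $u \le \phi$ follows by evaluating the integral along constant discs, or rather by noting $U \le \Phi$ pointwise and restricting to $\by = \zero$). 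This gives the claimed supremum characterization. The main obstacle I anticipate is the bookkeeping in the second paragraph — carefully checking that the two infima are over the same collection of admissible discs (in particular that a null holomorphic disc into $\Tcal_\omega$ really does have real part a conformal minimal disc into $\omega$ and conversely, with full control of the base point and the range staying in the tube), and handling the degenerate case $u \equiv -\infty$ uniformly; everything else is a direct transcription via Lemma \ref{lem:minimal} and Theorem \ref{th:nullpshminorant}.
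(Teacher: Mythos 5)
Your proposal is correct and follows essentially the same route the paper intends: pull $\phi$ back to the tube $\Tcal_\omega$, apply Theorem \ref{th:nullpshminorant} there, and transfer back using the correspondence between conformal minimal discs in $\omega$ and null holomorphic discs in $\Tcal_\omega$ together with Lemma \ref{lem:minimal}, which is exactly how the cited case $n=3$ \cite[Theorem 4.5]{DrinovecForstneric2015TAMS} is obtained. The only small caveat is that constant discs are not elements of $\Mgot(\d,\omega)$ (its members are immersions), so for $u\le\phi$ you should indeed use your second argument (that $U\le\Phi$ follows from the supremum characterization in Theorem \ref{th:nullpshminorant}), as you already note.
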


%
%
%
%
\begin{definition} \label{def:nullhull}
Let $K$ be a compact set in $\c^n$, $n\ge 3$. The {\em null hull}  of $K$ is the set
\begin{equation}\label{eq:nullhull}
		\wh K_{\Ngot} =\{\bz \in \c^n: v(\bz )\le \max_K v\ \ \ \forall v\in \NPsh(\c^n)\}.
\end{equation}  
\end{definition}

Note that  $\wh K_{\Ngot}$ is a special case of  a {\em $\mathbb G$-convex hull} introduced by 
Harvey and Lawson in \cite[Definition 4.3, p.\ 2434]{HarveyLawson2012AM}.
The maximum principle for subharmonic functions implies that
for any bounded null holomorphic curve $A\subset \c^n$ with boundary $bA\subset K$
we have $A\subset \wh K_{\Ngot}$. Since $\Psh(\c^n)\subset \NPsh(\c^n)$, we clearly have 
the inclusions
\begin{equation}\label{eq:comparehulls}
	K\subset   \wh K_{\Ngot}\subset  \wh K  \subset \Co(K).
\end{equation}
The polynomial hull $\wh K$ is  rarely equal to the convex hull $\Co(K)$, and
in general we also have $\wh K_{\Ngot}\ne \wh K$ (see \cite[Example 3.2]{DrinovecForstneric2015TAMS}).

The following characterization of the null hull agrees 
with \cite[Corollary 3.5]{DrinovecForstneric2015TAMS} 
for $n=3$. The proof in \cite{DrinovecForstneric2015TAMS}  
holds in any dimension $n\ge 3$, the nontrivial direction 
being furnished by Theorem \ref{th:nullpshminorant} in this paper. 
Recall that $|I|$ denotes the Lebesgue measure of a set $I\subset \r$.

%
%
%
%

\begin{corollary}\label{cor:DD}
Let $K$ be a compact set in $\c^n$ $(n\ge 3)$, and let  $\Omega\subset \c^n$ be a
bounded pseudoconvex Runge domain containing $K$.
A point $\bz\in \Omega$ belongs to the null hull $\widehat K_\Ngot$ of $K$ 
if and only if there exists a sequence of null discs $F_j\in \Ngot(\d,\Omega,\bz)$ 
such that 
\begin{equation}\label{eq:measure}
	\big| \{t\in [0,2\pi]: \dist(F_j(e^{\imath t}),K)<1/j\}  \big| \ge 2\pi -1/j, \quad 
	j=1,2,\ldots.
\end{equation}
\end{corollary}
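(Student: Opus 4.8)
The plan is to establish the two implications of the equivalence separately, following the argument of \cite[Corollary 3.5]{DrinovecForstneric2015TAMS}: the implication "$\Leftarrow$" is elementary, while "$\Rightarrow$" is reduced, via a bump function and Theorem \ref{th:nullpshminorant}, to showing that a certain null plurisubharmonic minorant vanishes at the given point. The dimension $n=3$ entered the proof in \cite{DrinovecForstneric2015TAMS} only through the underlying minorant theorem, whose role is now played by Theorem \ref{th:nullpshminorant}, valid for all $n\ge 3$.

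For "$\Leftarrow$" I would argue directly. Let $\bz\in\Omega$ and suppose $F_j\in\Ngot(\d,\Omega,\bz)$ satisfy \eqref{eq:measure}, and let $v\in\NPsh(\c^n)$ be arbitrary. By Proposition \ref{prop:testing-null}, $v\circ F_j$ is subharmonic, so the sub-mean value property gives $v(\bz)=v(F_j(0))\le\int_0^{2\pi}v(F_j(\E^{\imath t}))\,\frac{dt}{2\pi}$. Since $\Omega$ is bounded, $v$ is bounded above, say by $M$, on the compact set $\overline\Omega\supset F_j(\cd)$. Splitting the circle into the set $E_j=\{t\colon\dist(F_j(\E^{\imath t}),K)<1/j\}$, of measure at least $2\pi-1/j$, and its complement, and bounding $v$ on $E_j$ by $m_j:=\max\{v(\bw)\colon\dist(\bw,K)\le 1/j\}$ and on the complement by $M$, one gets $v(\bz)\le(1-\alpha_j)m_j+\alpha_j M$ with $0\le\alpha_j\le 1/(2\pi j)$. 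Letting $j\to\infty$ and using upper semicontinuity of $v$ (so that $m_j$ decreases to $\max_K v$) yields $v(\bz)\le\max_K v$. As $v\in\NPsh(\c^n)$ was arbitrary, $\bz\in\widehat K_\Ngot$.

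For "$\Rightarrow$" let $\bz\in\widehat K_\Ngot\cap\Omega$. It suffices to show that for every $\epsilon\in(0,1)$ there is $F\in\Ngot(\d,\Omega,\bz)$ with $\bigl|\{t\colon\dist(F(\E^{\imath t}),K)\ge\epsilon\}\bigr|<\epsilon$, since then applying this with $\epsilon=1/(j+1)$ gives discs $F_j$ satisfying \eqref{eq:measure}. After shrinking $\epsilon$ we may assume the closed $\epsilon$-neighbourhood $K_\epsilon$ of $K$ lies in $\Omega$. Set $\phi:=\min\{1,\epsilon^{-1}\dist(\cdotp,K)\}\colon\Omega\to[0,1]$; this is continuous, vanishes on $K$, and equals $1$ off the interior of $K_\epsilon$, so $\phi\ge\mathbf 1_{\{\dist(\cdotp,K)\ge\epsilon\}}$. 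By Theorem \ref{th:nullpshminorant} applied on $\Omega$, the function
\[
	u(\bw):=\inf\Bigl\{\int_0^{2\pi}\phi\bigl(F(\E^{\imath t})\bigr)\,\frac{dt}{2\pi}\colon F\in\Ngot(\d,\Omega,\bw)\Bigr\},\qquad \bw\in\Omega,
\]
satisfies $0\le u\le\phi\le 1$, is null plurisubharmonic on $\Omega$, and is the largest null plurisubharmonic minorant of $\phi$ on $\Omega$; in particular $u\equiv 0$ on $K$. Granting the claim $u(\bz)=0$, there is $F\in\Ngot(\d,\Omega,\bz)$ with $\int_0^{2\pi}\phi(F(\E^{\imath t}))\,dt<\epsilon$, and then $\bigl|\{t\colon\dist(F(\E^{\imath t}),K)\ge\epsilon\}\bigr|\le\int_0^{2\pi}\phi(F(\E^{\imath t}))\,dt<\epsilon$, as wanted.

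The remaining point, and the step I expect to be the main obstacle, is the claim $u(\bz)=0$; since $u\ge 0$ and $\max_K u=0$, this amounts to the inequality $u(\bz)\le 0$, which would follow at once from $\bz\in\widehat K_\Ngot$ if $u$ were (the restriction of) a function in $\NPsh(\c^n)$, whereas a priori $u$ is only null plurisubharmonic on $\Omega$. Bridging this gap is precisely where the hypothesis that $\Omega$ is a bounded pseudoconvex Runge domain is used: such a domain is polynomially convex, and one exploits this together with the extremal (largest-minorant) characterization of $u$ from Theorem \ref{th:nullpshminorant} to identify the $\NPsh(\Omega)$-hull of the compact set $K\subset\Omega$ with $\widehat K_\Ngot\cap\Omega$, so that $\bz\in\widehat K_\Ngot\cap\Omega$ forces $u(\bz)\le\max_K u=0$. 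This is exactly the argument of \cite[Section 3]{DrinovecForstneric2015TAMS}; it invokes the minorant theorem only as a black box and otherwise relies on standard properties of plurisubharmonic functions and of Runge domains, so it transfers verbatim to every $n\ge 3$ once Theorem \ref{th:nullpshminorant} is available, completing the proof.
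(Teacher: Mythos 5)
Your proposal is correct and takes essentially the same route as the paper, which simply notes that the proof of \cite[Corollary 3.5]{DrinovecForstneric2015TAMS} carries over to all $n\ge 3$ once Theorem \ref{th:nullpshminorant} supplies the nontrivial direction; your treatment of the easy implication and the reduction of the hard one to $u(\bz)=0$ is exactly that argument, spelled out in more detail than the paper gives. The single step you defer --- deducing $u(\bz)\le\max_K u=0$ from $\bz\in\widehat K_{\Ngot}$ using that the Runge pseudoconvex domain $\Omega$ is polynomially convex (in \cite{DrinovecForstneric2015TAMS} this is done by gluing the bounded envelope $u$ with a global plurisubharmonic function vanishing near the polynomial hull of $K$, rather than by a literal identification of the $\NPsh(\Omega)$-hull with $\widehat K_{\Ngot}$) --- is precisely the part the paper likewise leaves to the cited reference, so nothing essential is missing.
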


%
%
%
%

Similarly, the following characterization of the minimal hull 
generalizes \cite[Corollary 4.9]{DrinovecForstneric2015TAMS} to any dimension $n\ge 3$.
The nontrivial direction is furnished by Theorem \ref{th:minpshminorant}.

\begin{corollary}\label{cor:minullhull}
Let $D$ be a minimally convex domain in $\r^n$ $(n\ge 3)$, let $K$ be a compact set in $D$, and
let $\omega \Subset D$ be a relatively compact domain containing the minimal hull $\widehat K_{\Mgot,D}$ of $K$.
A point $\bx \in \omega$ belongs to $\widehat K_{\Mgot,D}$ if and only if there exists a sequence of 
conformal minimal discs $F_j \colon \cd \to \omega$ such that, for every $j=1,2,\ldots$, we have $F_j(0)=\bx$ and
\eqref{eq:measure}.
\end{corollary}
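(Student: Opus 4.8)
The statement is the minimal-hull analogue of Corollary~\ref{cor:DD}, so the plan is to transcribe the proof of Corollary~\ref{cor:DD} from \cite{DrinovecForstneric2015TAMS}, replacing null discs in $\c^n$ by conformal minimal discs in $\r^n$ and the Poisson-functional result Theorem~\ref{th:nullpshminorant} by its minimal counterpart Theorem~\ref{th:minpshminorant}. First I would prove the easy direction: suppose a sequence $F_j\colon\cd\to\omega$ with $F_j(0)=\bx$ satisfies \eqref{eq:measure}. Fix any $u\in\MPsh(D)$; after shrinking we may assume $u$ is continuous (every $p$-plurisubharmonic function is a decreasing limit of smooth ones, and this suffices since we only need an upper bound). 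Then $u\circ F_j$ is subharmonic on $\cd$ (Proposition~\ref{prop:characterizations}(d), since $F_j(\cd)$ is a $2$-dimensional minimal submanifold of $D$), so the sub-mean-value inequality gives
\[
u(\bx)=u(F_j(0))\le \int_0^{2\pi} u(F_j(e^{\imath t}))\,\frac{dt}{2\pi}.
\]
Splitting the circle into the set where $\dist(F_j(e^{\imath t}),K)<1/j$ and its complement, using continuity of $u$ on $\overline\omega$ together with \eqref{eq:measure} and boundedness of $u$ on $\overline\omega$, the right-hand side tends to $\max_K u$ as $j\to\infty$; hence $u(\bx)\le\max_K u$. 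Since this holds for all $u\in\MPsh(D)$, we get $\bx\in\widehat K_{\Mgot,D}$.

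For the converse, assume $\bx\in\widehat K_{\Mgot,D}\subset\omega$. I would apply Theorem~\ref{th:minpshminorant} on the domain $\omega$ to an upper semicontinuous function $\phi$ that equals $0$ on (a neighborhood of) $K$ and is large positive near $b\omega$; more precisely, following \cite{DrinovecForstneric2015TAMS}, choose for each $\epsilon>0$ a continuous $\phi_\epsilon\colon\omega\to[0,+\infty)$ with $\phi_\epsilon<\epsilon$ on $K$ and $\phi_\epsilon\ge 1$ outside a small neighborhood of $K$ (this is possible because $\omega$ is relatively compact). Let $u_\epsilon$ be the minimal plurisubharmonic minorant of $\phi_\epsilon$ on $\omega$ given by \eqref{eq:minPoisson-funct}. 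The key point is to transfer the bound from $\omega$ to $D$: since $\widehat K_{\Mgot,D}$ is defined by \emph{all} $\MPsh(D)$ functions while $u_\epsilon\in\MPsh(\omega)$ only, one uses the hypothesis that $\omega$ contains $\widehat K_{\Mgot,D}$ — this is exactly the role of the Runge-type assumption in Corollary~\ref{cor:DD} and its minimal-convexity analogue here. Concretely, by Proposition~\ref{prop:p-extension}(b) applied to $D$ (which is minimally convex), any $u\in\MPsh(\omega)$ that is $\le 0$ on $K$ can be compared with a globally defined minimal plurisubharmonic exhaustion of $D$ that agrees with it near $K$; choosing the neighborhood of $K$ inside $\omega$ and using that $\widehat K_{\Mgot,D}\Subset\omega$ together with an approximation argument, one concludes $u_\epsilon(\bx)\le\sup_K u_\epsilon<\epsilon$ for $\bx\in\widehat K_{\Mgot,D}$. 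Therefore, by the definition \eqref{eq:minPoisson-funct} of $u_\epsilon$ as an infimum over $F\in\Mgot(\d,\omega)$ with $F(0)=\bx$, there is such a disc $F=F_{j}$ (with $\epsilon=1/j$) satisfying
\[
\int_0^{2\pi}\phi_{1/j}(F_j(e^{\imath t}))\,\frac{dt}{2\pi}<\frac1j.
\]
Since $\phi_{1/j}\ge 1$ off a $(1/j)$-neighborhood of $K$, Chebyshev's inequality converts this integral bound into the measure estimate \eqref{eq:measure}, completing the converse.

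\textbf{Main obstacle.} The routine parts are the sub-mean-value estimate and the Chebyshev conversion; the delicate step is the transfer from $\MPsh(\omega)$ to $\MPsh(D)$, i.e.\ showing that the minorant $u_\epsilon$ produced on $\omega$ is small at every point of the \emph{$D$-hull} $\widehat K_{\Mgot,D}$, not merely at points of the $\omega$-hull $\widehat K_{\Mgot,\omega}$. This is where minimal convexity of $D$ and the extension Proposition~\ref{prop:p-extension} are essential, and where one must be careful to choose the various neighborhoods of $K$ so that the comparison functions are genuinely minimal plurisubharmonic on all of $D$. Once this is handled exactly as in \cite[proof of Corollary~4.9]{DrinovecForstneric2015TAMS} (with $p=2$ and $\r^n$ in place of $\r^3$, which changes nothing since Theorem~\ref{th:minpshminorant} already holds for all $n\ge3$), the proof is complete; I would simply remark that the details are identical to the $n=3$ case and refer the reader there.
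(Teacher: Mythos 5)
Your proposal is correct and is essentially the proof the paper intends (it defers to the $n=3$ case in the cited work, with the nontrivial direction supplied by Theorem~\ref{th:minpshminorant}): the sub-mean-value inequality for smooth approximants of $u\in\MPsh(D)$ gives the easy implication, and the envelope \eqref{eq:minPoisson-funct} on $\omega$, globalized to $\MPsh(D)$ and combined with Chebyshev's inequality, gives the converse. The one point to state precisely is the globalization step: Proposition~\ref{prop:p-extension} must be applied with the hull $\widehat K_{\Mgot,D}$ (which is the relevant compact $2$-convex set) rather than with $K$ itself, and since the envelope $u_\epsilon$ is only upper semicontinuous one should either smooth it first or, more simply, glue $v=\max\{u_\epsilon,\,C\rho-1\}$ on $\omega$ with $v=C\rho-1$ on $D\setminus\omega$, where $\rho\ge 0$ is the exhaustion of Proposition~\ref{prop:p-extension}(a) vanishing exactly on $\widehat K_{\Mgot,D}$ and $C$ is large; this yields $u_\epsilon(\bx)\le\max_K u_\epsilon<\epsilon$ as you claim.
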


\vspace{1mm}

%
%
%
%
\begin{remark} 
\label{rem:mean-hull}
Recall  (cf.\ Remark \ref{rem:mean-convex}) that a smoothly bounded domain 
$D\subset \r^n$ is mean-convex if and only if it is $(n-1)$-convex. 
By the maximum principle (see Proposition \ref{prop:max-principle}), 
mean-convex domains containing a given compact set $K\subset \r^n$
are natural barriers for minimal hypersurfaces with boundaries in $K$. 
The smallest such barrier, if it exists, is called the 
{\em mean-convex hull} of $K$; clearly it coincides with the $(n-1)$-convex hull $\wh K_{n-1}$ 
(see Definition \ref{def:p-hull}).  The main technique 
for finding the mean-convex hull is the {\em mean curvature flow} of hypersurfaces,  
introduced by Brakke \cite{Brakkebook}. For results on this subject we refer, among others, to the papers
\cite{MeeksYau1982MZ,GageHamilton1986JDG,EckerHuisken1989AM,EckerHuisken1991IM,ChenGigaGoto1991JDG,
HuiskenIlmanen2001JDG,HuiskenIlmanen2008JDG,Spadaro2011,MercierNovaga2015} 
and the monograph by Colding and Minicozzi \cite{ColdingMinicozzibook}.
Our proof of Corollary \ref{cor:minullhull} relies on completely different ideas,
but it applies only to the $2$-convex hull (which equals the 
mean-convex hull only in dimension $n=3$). We indicate the following natural question.

\begin{problem}
Let $K$ be a compact set with smooth boundary in $\r^n$ for some $n\ge 3$. Given a point $\bx_0\in \wh K_{\Mgot}$,
does there exist a conformal minimal disc $F\colon \cd \to \r^n$  such that $F(0)=\bx_0$ and $F(b\d)\subset K$?
\qed \end{problem}
\end{remark}

Recall that the {\em Green current} on $\c$ is defined on any $2$-form $\alpha = adx\wedge dy$ by
\[
	G(\alpha) = -\frac{1}{2\pi}  \int_\d \log|\cdotp|\, \alpha =
	- \frac{1}{2\pi}  \int_{\zeta \in \d} \log|\zeta|\cdotp a(\zeta) dx\wedge dy.
\]
Clearly, $G$ is a positive current of bidimension $(1,1)$ and $dd^c G=\sigma-\delta_0$,
where $\sigma$ is the normalized Lebesgue measure on the circle $\t=b\d$ and 
$\delta_\bz$ denotes the point mass at $\bz$.
If $F \colon \cd\to \c^n$ is a holomorphic disc, then $F_*G$ is a positive current of
bidimension $(1,1)$ on $\c^n$ satisfying  $dd^c (F_*G) = F_* \sigma - \delta_{F(0)}$. 
(See Duval and Sibony \cite[Example 4.9]{DuvalSibony1995}.) 

Assume now that $K$ is a compact set in $\c^n$, $\bz$ is a point in the null hull $\wh K_{\Ngot}$,
and $F_j\colon \cd\to\c^n$ is a sequence of holomorphic null discs with centers $F_j(0)=\bz$, 
furnished by Corollary \ref{cor:DD}. 
By Wold \cite{Wold2011JGA} (see also \cite[Proof of Theorem 6.2]{DrinovecForstneric2015TAMS}),
the sequence of Green currents $T_j=(F_j)_*G$ on $\c^n$ has a weakly convergent subsequence,
and the limit current $T$ satisfies $dd^c T=\mu-\delta_{\bz}$ where $\mu$ is a probability
measure on $K$. This generalizes the characterization of the null hull of a compact set in $\c^3$
by null positive Green currents,
given by \cite[Theorem 6.2]{DrinovecForstneric2015TAMS}, to any dimension $n\ge 3$.
Similarly, applying the above argument to the sequence of conformal minimal discs
$F_j\colon \cd\to \r^n$ furnished by Corollary \ref{cor:minullhull} and using the mass formula
in \cite[Lemma 5.1]{DrinovecForstneric2015TAMS}, we see that 
\cite[Theorem 6.4, Corollaries 6.5, 6.10]{DrinovecForstneric2015TAMS}
hold in any dimension $n\ge 3$, with the same proofs.

%
%
\begin{remark}
Recently, Sibony \cite{Sibony2017} found nonnegative directed currents of bidimension $(1,1)$
describing the $\Gamma$-hull $\wh K_\Gamma$ of a compact set $K\subset \c^n$
in any  directed system determined by a closed,
fiberwise conical subset $\Gamma$ of the tangent bundle $T\c^n$. 
The hull $\wh K_\Gamma$  is defined by the maximum principle 
in terms of $\Gamma$-plurisubharmonic functions; i.e., functions whose Levi form is nonnegative 
in directions from $\Gamma$. Sibony's characterization holds even if there are no $\Gamma$-directed holomorphic 
discs (i.e., discs whose derivatives lie in $\Gamma$); in particular, his $\Gamma$-directed current need not be limits of 
directed Green currents. The null hull falls within this framework; in this case, the fiber 
$\Gamma_z\subset T\c^n\cong \c^n$ over any point $z\in \c^n$ is 
the null quadric  \eqref{eq:Agot}, $\Gamma$-plurisubharmonic functions
are null plurisubharmonic functions, and $\Gamma$-discs are null discs.
(The classical case of the polynomial hull is due to Duval and Sibony \cite{DuvalSibony1995};
see also Wold \cite{Wold2011JGA}; in this case $\Gamma=T\c^n$.)
It seems an interesting question to decide in which systems directed by a complex 
analytic variety $\Gamma\subset T\c^n$ with conical fibers is it possible to describe the hull 
$\wh K_\Gamma$ by sequences of $\Gamma$-directed holomorphic discs $F_j\colon\cd\to\c^n$ 
whose boundaries converge to $K$ in measure (cf.\ \eqref{eq:measure}). 
For the polynomial hull, this holds by 
Poletsky \cite{Poletsky1991PSPM,Poletsky1993IUMJ} and Bu-Schachermayer \cite{BuSchachermayer1992TAMS}.
(For generalizations to complex manifolds, see 
\cite{LarussonSigurdsson1999IUMJ,LarussonSigurdsson2003JRAM,Rosay2003JKMS,Rosay2003IUMJ}; 
for complex spaces, see \cite{DrinovecForstneric2012IUMJ}.) 
For the null hull, this holds by \cite[Theorem 6.2]{DrinovecForstneric2015TAMS} 
(for $n=3$) and Corollary \ref{cor:DD} (for $n>3$). These seem to be the only cases studied so far.
\end{remark}


\subsection*{Acknowledgements}
A. Alarc\'on was supported by the Ram\'on y Cajal program of the Spanish Ministry of Economy and Competitiveness.
A.\ Alarc\'{o}n and F.\ J.\ L\'opez were partially supported by the MINECO/FEDER grants MTM2014-52368-P and MTM2017-89677-P, Spain. 
B.\ Drinovec Drnov\v sek and F.\ Forstneri\v c were partially  supported  by the research program P1-0291 and 
grants J1-5432 and J1-7256 from ARRS, Republic of Slovenia.


\vskip 0.3cm

\noindent Antonio Alarc\'{o}n

\noindent Departmento de Geometr\'ia y Topolog\'ia e Instituto de Matem\'aticas (IEMath-GR), Universidad de Granada, E--18071 Granada, Spain.

\noindent  e-mail: {\tt alarcon@ugr.es}

\vspace*{0.3cm}
\noindent Barbara Drinovec Drnov\v sek

\noindent Faculty of Mathematics and Physics, University of Ljubljana, and Institute
of Mathematics, Physics and Mechanics, Jadranska 19, SI--1000 Ljubljana, Slovenia.

\noindent e-mail: {\tt barbara.drinovec@fmf.uni-lj.si}

\vspace*{0.3cm}

\noindent Franc Forstneri\v c

\noindent Faculty of Mathematics and Physics, University of Ljubljana, and Institute
of Mathematics, Physics and Mechanics, Jadranska 19, SI--1000 Ljubljana, Slovenia.

\noindent e-mail: {\tt franc.forstneric@fmf.uni-lj.si}

\vspace*{0.3cm}

\noindent Francisco J.\ L\'opez

\noindent Departmento de Geometr\'ia y Topolog\'ia e Instituto de Matem\'aticas (IEMath-GR), Universidad de Granada, E--18071 Granada, Spain.

\noindent  e-mail: {\tt fjlopez@ugr.es}


\begin{thebibliography}{10}

\bibitem{Alarcon2010AIP}
A.~Alarc{\'o}n.
\newblock Compact complete proper minimal immersions in strictly convex bounded
  regular domains of {$\mathbb R^3$}.
\newblock In {\em X{VIII} {I}nternational {F}all {W}orkshop on {G}eometry and
  {P}hysics}, volume 1260 of {\em AIP Conf. Proc.}, pages 105--111. Amer. Inst.
  Phys., Melville, NY, 2010.

\bibitem{AlarconDrinovecForstnericLopez2015}
A.~Alarc\'on, B.~Drinovec~Drnov{\v{s}}ek, F.~Forstneri{\v{c}}, and F.~J.
  L\'opez.
\newblock Every bordered {R}iemann surface is a complete conformal minimal
  surface bounded by {J}ordan curves.
\newblock {\em Proc. Lond. Math. Soc. (3)}, 111(4):851--886, 2015.

\bibitem{AlarconForstneric2015MA}
A.~Alarc\'on and F.~Forstneri{\v{c}}.
\newblock The {C}alabi-{Y}au problem, null curves, and {B}ryant surfaces.
\newblock {\em Math. Ann.}, 363(3-4):913--951, 2015.

\bibitem{AlarconForstnericLopez2016MAMS}
A.~Alarc{\'o}n, F.~Forstneri{\v{c}}, and F.~J. L{\'o}pez.
\newblock New complex analytic methods in the study of non-orientable minimal
  surfaces in $\mathbb{R}^n$.
\newblock {\em Mem. Amer. Math. Soc.}, in press.

\bibitem{AlarconForstnericLopez2016MZ}
A.~Alarc\'on, F.~Forstneri{\v{c}}, and F.~J. L\'opez.
\newblock Embedded minimal surfaces in {$\Bbb{R}^n$}.
\newblock {\em Math. Z.}, 283(1-2):1--24, 2016.

\bibitem{AlarconLopez2012JDG}
A.~Alarc{\'o}n and F.~J. L{\'o}pez.
\newblock Minimal surfaces in {$\mathbb{R}^3$} properly projecting into
  {$\mathbb{R}^2$}.
\newblock {\em J. Differential Geom.}, 90(3):351--381, 2012.

\bibitem{AlarconLopez2013MA}
A.~Alarc{\'o}n and F.~J. L{\'o}pez.
\newblock Null curves in $\mathbb{C}^3$ and {C}alabi-{Y}au conjectures.
\newblock {\em Math. Ann.}, 355(2):429--455, 2013.

\bibitem{AlarconLopez2014TAMS}
A.~Alarc{\'o}n and F.~J. L{\'o}pez.
\newblock Properness of associated minimal surfaces.
\newblock {\em Trans. Amer. Math. Soc.}, 366(10):5139--5154, 2014.

\bibitem{AlarconLopez2015GT}
A.~Alarc{\'o}n and F.~J. L{\'o}pez.
\newblock Approximation theory for nonorientable minimal surfaces and
  applications.
\newblock {\em Geom. Topol.}, 19(2):1015--1062, 2015.

\bibitem{AlarconNadirashvili2008MZ}
A.~Alarc{\'o}n and N.~Nadirashvili.
\newblock Limit sets for complete minimal immersions.
\newblock {\em Math. Z.}, 258(1):107--113, 2008.

\bibitem{Anderson1985ASENS}
M.~T. Anderson.
\newblock Curvature estimates for minimal surfaces in {$3$}-manifolds.
\newblock {\em Ann. Sci. \'Ecole Norm. Sup. (4)}, 18(1):89--105, 1985.

\bibitem{Brakkebook}
K.~A. Brakke.
\newblock {\em The motion of a surface by its mean curvature}, volume~20 of
  {\em Mathematical Notes}.
\newblock Princeton University Press, Princeton, N.J., 1978.

\bibitem{BuSchachermayer1992TAMS}
S.~Q. Bu and W.~Schachermayer.
\newblock Approximation of {J}ensen measures by image measures under
  holomorphic functions and applications.
\newblock {\em Trans. Amer. Math. Soc.}, 331(2):585--608, 1992.

\bibitem{ChenGigaGoto1991JDG}
Y.~G. Chen, Y.~Giga, and S.~Goto.
\newblock Uniqueness and existence of viscosity solutions of generalized mean
  curvature flow equations.
\newblock {\em J. Differential Geom.}, 33(3):749--786, 1991.

\bibitem{ColdingMinicozzibook}
T.~H. Colding and W.~P. Minicozzi, II.
\newblock {\em A course in minimal surfaces}, volume 121 of {\em Graduate
  Studies in Mathematics}.
\newblock American Mathematical Society, Providence, RI, 2011.

\bibitem{Demailly1990}
J.-P. Demailly.
\newblock Cohomology of {$q$}-convex spaces in top degrees.
\newblock {\em Math. Z.}, 204(2):283--295, 1990.

\bibitem{Dor1996}
A.~Dor.
\newblock A domain in {${\bf C}^m$} not containing any proper image of the unit
  disc.
\newblock {\em Math. Z.}, 222(4):615--625, 1996.

\bibitem{DrinovecForstneric2007DMJ}
B.~Drinovec~Drnov{\v{s}}ek and F.~Forstneri{\v{c}}.
\newblock Holomorphic curves in complex spaces.
\newblock {\em Duke Math. J.}, 139(2):203--253, 2007.

\bibitem{DrinovecForstneric2010AJM}
B.~Drinovec~Drnov{\v{s}}ek and F.~Forstneri{\v{c}}.
\newblock Strongly pseudoconvex domains as subvarieties of complex manifolds.
\newblock {\em Amer. J. Math.}, 132(2):331--360, 2010.

\bibitem{DrinovecForstneric2012IUMJ}
B.~Drinovec~Drnov{\v{s}}ek and F.~Forstneri{\v{c}}.
\newblock The {P}oletsky-{R}osay theorem on singular complex spaces.
\newblock {\em Indiana Univ. Math. J.}, 61(4):1407--1423, 2012.

\bibitem{DrinovecForstneric2015TAMS}
B.~Drinovec~Drnov{\v{s}}ek and F.~Forstneri{\v{c}}.
\newblock Minimal hulls of compact sets in {$\Bbb{R}^3$}.
\newblock {\em Trans. Amer. Math. Soc.}, 368(10):7477--7506, 2016.

\bibitem{DuvalSibony1995}
J.~Duval and N.~Sibony.
\newblock Polynomial convexity, rational convexity, and currents.
\newblock {\em Duke Math. J.}, 79(2):487--513, 1995.

\bibitem{EckerHuisken1989AM}
K.~Ecker and G.~Huisken.
\newblock Mean curvature evolution of entire graphs.
\newblock {\em Ann. of Math. (2)}, 130(3):453--471, 1989.

\bibitem{EckerHuisken1991IM}
K.~Ecker and G.~Huisken.
\newblock Interior estimates for hypersurfaces moving by mean curvature.
\newblock {\em Invent. Math.}, 105(3):547--569, 1991.

\bibitem{FerrerMartinMeeks2012AM}
L.~Ferrer, F.~Mart{\'{\i}}n, and W.~H. Meeks, III.
\newblock Existence of proper minimal surfaces of arbitrary topological type.
\newblock {\em Adv. Math.}, 231(1):378--413, 2012.

\bibitem{Forstneric2017book}
F.~Forstneri{\v{c}}.
\newblock {\em Stein manifolds and holomorphic mappings (The homotopy principle
  in complex analysis)}, volume~56 of {\em Ergebnisse der Mathematik und ihrer
  Grenzgebiete. 3. Folge. A Series of Modern Surveys in Mathematics [Results in
  Mathematics and Related Areas. 3rd Series. A Series of Modern Surveys in
  Mathematics]}.
\newblock Springer, Cham, second edition, 2017.

\bibitem{ForstnericGlobevnik1992}
F.~Forstneri{\v{c}} and J.~Globevnik.
\newblock Discs in pseudoconvex domains.
\newblock {\em Comment. Math. Helv.}, 67(1):129--145, 1992.

\bibitem{GageHamilton1986JDG}
M.~Gage and R.~S. Hamilton.
\newblock The heat equation shrinking convex plane curves.
\newblock {\em J. Differential Geom.}, 23(1):69--96, 1986.

\bibitem{GilbargTrudingerbook}
D.~Gilbarg and N.~S. Trudinger.
\newblock {\em Elliptic partial differential equations of second order}.
\newblock Classics in Mathematics. Springer-Verlag, Berlin, 2001.
\newblock Reprint of the 1998 edition.

\bibitem{GreeneWu1975}
R.~E. Greene and H.~Wu.
\newblock Embedding of open {R}iemannian manifolds by harmonic functions.
\newblock {\em Ann. Inst. Fourier (Grenoble)}, 25(1, vii):215--235, 1975.

\bibitem{HarveyLawson20092AJM}
F.~R. Harvey and H.~B. Lawson, Jr.
\newblock An introduction to potential theory in calibrated geometry.
\newblock {\em Amer. J. Math.}, 131(4):893--944, 2009.

\bibitem{HarveyLawson2011ALM}
F.~R. Harvey and H.~B. Lawson, Jr.
\newblock Plurisubharmonicity in a general geometric context.
\newblock In {\em Geometry and analysis. {N}o. 1}, volume~17 of {\em Adv. Lect.
  Math. (ALM)}, pages 363--402. Int. Press, Somerville, MA, 2011.

\bibitem{HarveyLawson2012AM}
F.~R. Harvey and H.~B. Lawson, Jr.
\newblock Geometric plurisubharmonicity and convexity: an introduction.
\newblock {\em Adv. Math.}, 230(4-6):2428--2456, 2012.

\bibitem{HarveyLawson2013IUMJ}
F.~R. Harvey and H.~B. Lawson, Jr.
\newblock {$p$}-convexity, {$p$}-plurisubharmonicity and the {L}evi problem.
\newblock {\em Indiana Univ. Math. J.}, 62(1):149--169, 2013.

\bibitem{Hormanderbook}
L.~H{\"o}rmander.
\newblock {\em An introduction to complex analysis in several variables},
  volume~7 of {\em North-Holland Mathematical Library}.
\newblock North-Holland Publishing Co., Amsterdam, third edition, 1990.

\bibitem{HuiskenIlmanen2001JDG}
G.~Huisken and T.~Ilmanen.
\newblock The inverse mean curvature flow and the {R}iemannian {P}enrose
  inequality.
\newblock {\em J. Differential Geom.}, 59(3):353--437, 2001.

\bibitem{HuiskenIlmanen2008JDG}
G.~Huisken and T.~Ilmanen.
\newblock Higher regularity of the inverse mean curvature flow.
\newblock {\em J. Differential Geom.}, 80(3):433--451, 2008.

\bibitem{JorgeMeeks1983T}
L.~P. Jorge and W.~H. Meeks, III.
\newblock The topology of complete minimal surfaces of finite total {G}aussian
  curvature.
\newblock {\em Topology}, 22(2):203--221, 1983.

\bibitem{LarussonSigurdsson1998JRAM}
F.~L{\'a}russon and R.~Sigurdsson.
\newblock Plurisubharmonic functions and analytic discs on manifolds.
\newblock {\em J. reine Angew. Math.}, 501:1--39, 1998.

\bibitem{LarussonSigurdsson1999IUMJ}
F.~L{\'a}russon and R.~Sigurdsson.
\newblock Plurisubharmonic extremal functions, {L}elong numbers and coherent
  ideal sheaves.
\newblock {\em Indiana Univ. Math. J.}, 48(4):1513--1534, 1999.

\bibitem{LarussonSigurdsson2003JRAM}
F.~L{\'a}russon and R.~Sigurdsson.
\newblock Plurisubharmonicity of envelopes of disc functionals on manifolds.
\newblock {\em J. reine Angew. Math.}, 555:27--38, 2003.

\bibitem{MartinMeeksNadirashvili2007AJM}
F.~Mart{\'{\i}}n, W.~H. Meeks, III, and N.~Nadirashvili.
\newblock Bounded domains which are universal for minimal surfaces.
\newblock {\em Amer. J. Math.}, 129(2):455--461, 2007.

\bibitem{MeeksPerez2004SDG}
W.~H. Meeks, III and J.~P{\'e}rez.
\newblock Conformal properties in classical minimal surface theory.
\newblock In {\em Surveys in differential geometry. {V}ol. {IX}}, Surv. Differ.
  Geom., IX, pages 275--335. Int. Press, Somerville, MA, 2004.

\bibitem{MeeksRosenberg2008JDG}
W.~H. Meeks, III and H.~Rosenberg.
\newblock Maximum principles at infinity.
\newblock {\em J. Differential Geom.}, 79(1):141--165, 2008.

\bibitem{MeeksYau1982MZ}
W.~H. Meeks, III and S.~T. Yau.
\newblock The existence of embedded minimal surfaces and the problem of
  uniqueness.
\newblock {\em Math. Z.}, 179(2):151--168, 1982.

\bibitem{MercierNovaga2015}
G.~Mercier and M.~Novaga.
\newblock Mean curvature flow with obstacles: existence, uniqueness and
  regularity of solutions.
\newblock {\em Interfaces Free Bound.}, 17(3):399--426, 2015.

\bibitem{Nitschebook}
J.~C.~C. Nitsche.
\newblock {\em Vorlesungen \"uber {M}inimalfl\"achen}.
\newblock Springer-Verlag, Berlin-New York, 1975.
\newblock Die Grundlehren der mathematischen Wissenschaften, Band 199.

\bibitem{Ossermanbook}
R.~Osserman.
\newblock {\em A survey of minimal surfaces}.
\newblock Dover Publications Inc., New York, second edition, 1986.

\bibitem{Poletsky1991PSPM}
E.~A. Poletsky.
\newblock Plurisubharmonic functions as solutions of variational problems.
\newblock In {\em Several complex variables and complex geometry, {P}art 1
  ({S}anta {C}ruz, {CA}, 1989)}, volume~52 of {\em Proc. Sympos. Pure Math.},
  pages 163--171. Amer. Math. Soc., Providence, RI, 1991.

\bibitem{Poletsky1993IUMJ}
E.~A. Poletsky.
\newblock Holomorphic currents.
\newblock {\em Indiana Univ. Math. J.}, 42(1):85--144, 1993.

\bibitem{Rangebook}
R.~M. Range.
\newblock {\em Holomorphic functions and integral representations in several
  complex variables}, volume 108 of {\em Graduate Texts in Mathematics}.
\newblock Springer-Verlag, New York, 1986.

\bibitem{Rosay2003JKMS}
J.-P. Rosay.
\newblock Approximation of non-holomorphic maps, and {P}oletsky theory of
  discs.
\newblock {\em J. Korean Math. Soc.}, 40(3):423--434, 2003.

\bibitem{Rosay2003IUMJ}
J.-P. Rosay.
\newblock Poletsky theory of disks on holomorphic manifolds.
\newblock {\em Indiana Univ. Math. J.}, 52(1):157--169, 2003.

\bibitem{Sibony2017}
N.~Sibony.
\newblock Pfaff systems, currents and hulls.
\newblock {\em Math. Z.}, 285(3-4):1107--1123, 2017.

\bibitem{Spadaro2011}
E.~Spadaro.
\newblock Mean-convex sets and minimal barriers.
\newblock Preprint arXiv:1112.4288.

\bibitem{Williams1984JRAM}
G.~H. Williams.
\newblock The {D}irichlet problem for the minimal surface equation with
  {L}ipschitz continuous boundary data.
\newblock {\em J. reine Angew. Math.}, 354:123--140, 1984.

\bibitem{Wold2011JGA}
E.~F. Wold.
\newblock A note on polynomial convexity: {P}oletsky disks, {J}ensen measures
  and positive currents.
\newblock {\em J. Geom. Anal.}, 21(2):252--255, 2011.

\end{thebibliography}
\end{document}